\newcolumntype{L}[1]{>{\raggedright\let\newline\\\arraybackslash\hspace{0pt}}m{#1}}
\newcolumntype{C}[1]{>{\centering\let\newline\\\arraybackslash\hspace{0pt}}m{#1}}
\newcolumntype{R}[1]{>{\raggedleft\let\newline\\\arraybackslash\hspace{0pt}}m{#1}}
\theoremstyle{plain}
\newtheorem{theorem}{\protect Theorem}[section]
\newtheorem{definition}[theorem]{\protect Definition}
\newtheorem{lemma}[theorem]{\protect Lemma}
\newtheorem{remark}[theorem]{\protect Remark}
\newtheorem{ass}{\protect Assumption}
\newtheorem{corollary}[theorem]{\protect Corollary}
\def\d{\mathrm{d}}
\def\ie{\mathrm{i.e.}}
\def\as{\mathrm{a.s.}}
\newcommand{\Lip}{\mathrm{Lip}}
\newcommand{\R}{\mathbb{R}}
\newcommand{\E}{\mathbb{E}}
\newcommand{\X}{\mathcal{X}}
\newcommand{\T}{\top}
\newcommand{\F}{\mathcal{F}}
\newcommand{\Fb}{\mathbb{F}}
\newcommand{\Pb}{\mathbb{P}}
\newcommand{\Pc}{\mathcal{P}}
\newcommand{\tr}{\mathrm{tr}}
\newcommand{\Law}{\mathcal{L}}
\newcommand{\st}{\mathrm{s.t.}}
\newcommand{\Gb}{\mathbb{G}}
\newcommand{\pa}{\partial}
\newcommand{\M}{\mathbb{M}}
\newcommand{\Mc}{\mathcal{M}}
\newcommand{\U}{\mathscr{U}}
\newcommand{\Lc}{\mathcal{L}}
\newcommand{\Lb}{\mathbb{L}}
\newcommand{\lv}{\lVert}
\newcommand{\rv}{\rVert}
\newcommand{\tX}{\mathtt{X}}
\newcommand{\tY}{\mathtt{Y}}
\newcommand{\tH}{\mathtt{H}}
\newcommand{\BV}{\mathrm{BV}}
\newcommand{\assref}[1]{\hyperref[#1]{Assumption \ref*{#1}}}
\newcommand{\thmref}[1]{\hyperref[#1]{Theorem \ref*{#1}}}
\newcommand{\propref}[1]{\hyperref[#1]{Proposition \ref*{#1}}}
\newcommand{\remref}[1]{\hyperref[#1]{Remark \ref*{#1}}}
\newcommand{\lemref}[1]{\hyperref[#1]{Lemma \ref*{#1}}}
\newcommand{\defref}[1]{\hyperref[#1]{Definition \ref*{#1}}}
\newcommand{\corref}[1]{\hyperref[#1]{Corollary \ref*{#1}}}
\newcommand{\equref}[1]{\hyperref[#1]{(\ref*{#1})}}
\newcommand{\exaref}[1]{\hyperref[#1]{Example \ref*{#1}}}
\tikzstyle{startstop} = [rectangle, minimum width=3.5cm, minimum height=1cm, text centered, draw=black]
\tikzstyle{process} = [rectangle, minimum width=3.5cm, minimum height=1cm, text centered, draw=black]
\tikzstyle{arrow} = [thick, ->, >=stealth]
\author{Lijun Bo \thanks{Email: lijunbo@ustc.edu.cn, School of Mathematics and Statistics, Xidian University, Xi'an, 710126, China.}
\and
Jingfei Wang \thanks{Email:wjf2104296@mail.ustc.edu.cn, School of Mathematical Sciences, University of Science and Technology of China, Hefei, 230026, China.}
\and
Xiang Yu \thanks{Email: xiang.yu@polyu.edu.hk, Department of Applied Mathematics, The Hong Kong Polytechnic University, Kowloon, Hong Kong, China.}
}
\title{Constrained mean-field control with singular controls: Existence, stochastic maximum principle and constrained FBSDE}
\date{ }
\begin{document}
\maketitle

\vspace{-0.1in}
\begin{abstract}
	This paper studies a class of mean-field control (MFC) problems with singular controls under general dynamic state-control-law constraints. We first propose a customized relaxed control formulation to cope with the dynamic mixed constraints and establish the existence of an optimal control using  compactification argument in the proper canonical spaces to accommodate singular controls. To further characterize the optimal pair of regular and singular controls, we treat the controlled McKean-Vlasov process as an infinite-dimensional equality constraint and recast the MFC problem as an optimization problem on canonical spaces with constraints on Banach space, allowing us to derive the stochastic maximum principle (SMP) and a class of constrained BSDE using a new Lagrange multipliers method. Additionally, we investigate the uniqueness and the stability result of the solution to the constrained FBSDE associated {with} the constrained MFC with singular controls.
\ \\

\noindent
\textbf{Keywords}: Mean-field control, singular control, dynamic state-control-law constraint, compactification, stochastic maximum principle, constrained FBSDE\\

\textbf{2020 MSC}: 60H30; 49N80; 46N10; 60H10
	
\end{abstract}
\section{Introduction}
		
To overcome the high complexity in large stochastic system caused by the mutual influence among agents, mean-field game (MFG) problems (see \cite{Huangetal2006} and \cite{LL2007}) { have} been proposed and popularized in academic studies, which is to find a Nash equilibrium in a competitive game formulation where each agent's impact on the aggregate distribution is negligible. MFC problems {have} also gained considerable attention, which concerns the cooperative population where all agents share the same goal to attain the social optimum. The present paper targets at the MFC problem,  which is often interpreted as an optimal control of McKean-Vlasov dynamics by a social planner. Some pioneer studies on this topic, just to name a few, can be found in \cite{AD2010,Buckdahn2011,Carmona2015,Carmona1,BWWY} for the SMP method; \cite{Lacker17} and \cite{Djeteetal2022} for the limit theory; \cite{Lauriere2014} and \cite{PW17}  for the dynamic programming principle (DPP) and HJB equation on the Wasserstein space of probability measures; \cite{Bayraktaretal2018}, \cite{Wuzhang2020} and \cite{Zhou2024} for the viscosity solution and comparison principle of the HJB equation. However, it is notable that most existing studies on MFC are limited to regular controls.
		
Singular control, also called bounded variation stochastic control, allows the displacement of the states to be discontinuous. In single agent's models, optimal singular control problems have been extensively investigated. For instance, we refer to \cite{Ma92} for discussions on smooth fit principle; \cite{Haussmann} for the existence of optimal singular controls based on the relaxed control formulation; \cite{Haussmannb} for the proof of DPP to account for singular controls; \cite{Bahlali} for the SMP tailor-made for singular controls; \cite{GTom2009} for a general theory on smooth fit principle. In the competitive game framework, there are a handful of studies on singular controls, see \cite{CFR13} for the n-player game with singular control as well as \cite{FH,Campi22,CDF23} for some MFG problems with singular control. However, theoretical studies for MFC problems with singular control are underexplored in the literature, and only a recent paper \cite{DenH23} studies the derivation of DPP and the quasi-variational inequality using two-layer parameterisations of stochastic processes. 
		
The goal of the paper is to contribute to the study of MFC problems by featuring both the singular control and the joint dynamic state-control-law constraints. Motivated by practical risk regulations, liquidity requirements, or feasibility restraints, different types of constraints have been considered in stochastic control problems, but predominantly in single agent's model. Recently, in mean-field models with only regular controls, there are some emerging {research studies} to incorporate state and/or law constraints, see \cite{Germain23} for the level-set approach to tackle the law constraint in MFC problems; \cite{Daudin23a} for the analysis of mean-field limits under state constraints; \cite{Daudin23b} for the formulation of controlled FP equation under state constraints; \cite{BWY} for the optimization formulation of the constrained MFC problem and the derivation of the SMP using the generalized Lagrange multipliers method. 
		
It is evident that the allowance of singular control in the mean-field model will entangle with the dynamic mixed constraints, which renders some existing methods for constrained MFC with regular controls not applicable in our framework. For example, it is difficult to generalize the level set approach in \cite{Germain23} to account for both control constraints and the singular control. Due to the possible discontinuity of the state process and its law under the singular control, characterizing the associated FP equation is still an open problem in the literature, and hence the approach in \cite{Daudin23b} is also not suitable to handle the singular control. On the other hand, some conventional methods that have been employed in unconstrained MFG and MFC problems with singular control such as DPP and compactification arguments also need to be carefully and substantially modified in the setting when pathwise constraints are imposed jointly on the state process and its distribution flow. 
		
The present paper fills the gap in the literature by advancing some theoretical foundations for constrained MFC problems with singular control. The first contribution of the paper is to address the existence of an optimal pair of regular and singular controls by developing the customized relaxed control formulation and martingale problem to embed the dynamic mixed constraints. That is, we embed the dynamic mixed constraints in item (iv) of the admissible control rules in Definition \ref{admissible}. We then present in Lemma \ref{equivalenceY} a martingale problem characterization associated to the admissible control rules and establish in Lemma \ref{closedset} the closedness property of the set of probability measures fulfilling the constraints. By developing technical compactification arguments to accommodate the singular control and combining the previous lemmas using the embedded constraints, we are able to show that the set of approximately optimal control rules is compact (see Lemma \ref{compactness}), and hence the set of optimal control rules is compact and convex that yields the existence of an optimal relaxed control (see Theorem \ref{existence_opt}). By standard argument, we also obtain the existence of an optimal strict control for the constrained MFC problem with singular control. { We emphasize that the classical compactification technique for singular controls in \cite{Haussmann} has only been developed in unconstrained settings. In contrast, our work is, to the best of our knowledge, the first to consider compactification under joint constraints over the control, state and law. Rather than directly coping with the dynamic constraints, we transform the problem into constraints over the probability measures on the canonical space. This allows us to extend the relaxed-control framework to the constrained setting. The difficulty caused by constraints is handled by a new result established in Lemma \ref{lsc}, which is new to the literature. Moreover, by leveraging the measurable selection result established in Lemma~\ref{strictcontrol}, we can convert any probability measure on the canonical space into a controlled process that satisfies the original joint dynamic constraints.}
		
Our second contribution is to further characterize the optimal controls by establishing the SMP for the constrained MFC problem with singular control and derive the adjoint constrained BSDE. However, some conventional techniques such as the first order variation or the spike variation of controls may fail in our setting as the mixed constraints might be violated. Inspired by our recent study \cite{BWY} for constrained MFC with regular controls, we propose to regard the controlled McKean-Vlasov dynamics with singular control as an infinite-dimensional equality constraint and recast the MFC problem as an optimization problem on some proper canonical spaces with general constraints on Banach space (see the formulation \eqref{func_def}-\eqref{optimization}). Similar to \cite{BWY}, we are allowed to derive the SMP by using the generalized Fritz-John (FJ) optimality condition and the Lagrange multipliers method. To be more precise, when the forward McKean-Vlasov SDE is taken as an equality constraint, the FJ condition for the control variable gives the SMP, and the Lagrange multipliers in the space of It\^{o} processes induced by the FJ condition on the state variable yields the adjoint processes as the solution to the constrained BSDE, see Theorem \ref{SMP_con}. {To this end, the core idea is to treat the pair $(X,\alpha)\in\mathbb{M}^2\times\mathscr{U}[0,T]$ of the state and control processes as the optimization variable within an abstract constrained optimization problem on Banach spaces. A crucial step is to consider the Hilbert space $\mathbb{M}^2$ for the state process $X=(X_t)_{t\in[0,T]}$. In particular, to cope with singular controls, it is necessary to modify the space $\mathbb{M}^2$ by integrating the singular control component (refer to the space $\mathscr{K}$ defined by \eqref{eq:mathscrK}). Consequently, we must endow the newly defined policy set $\mathscr{K}$ with an appropriate topology, enabling us to utilize the Lagrangian multiplier method proposed in \cite{BWY} to derive the SMP for our constrained MFC with singular controls.}   
		
The third contribution is to analyze the non-standard constrained FBSDE in \eqref{FBSDE}-\eqref{FBSDE2} associated with our constrained MFC problem with singular control. It is worth noting that the Lagrange multiplier $\eta$ in Theorem \ref{SMP_con} is treated as part of the solution to the constrained BSDE.  {The constrained FBSDE \eqref{FBSDE}-\eqref{FBSDE2} differs substantially from the conventional reflected BSDE that has been extensively studied since \cite{ElKaroui1997R}. A typical reflected BSDE takes the form given by
\begin{align*}
Y_t &= \xi + \int_t^T f(s,Y_s,Z_s)\d s + K_T - K_t - \int_t^T Z_s\d W_s,~~Y_t\geq S_t,~~\int_0^T (Y_t - S_t)\d K_t = 0,
\end{align*}
where, $K=(K_t)_{t\in[0,T]}$ is an adapted increasing process enforcing the constraint. In contrast, in our formulation (Theorem \ref{SMP_con}), the boundary condition for the adjoint process $Y$ is characterized by $\int_0^T (G(t)^{\top} Y_t + c) \mathrm{d}\widehat{\zeta}_t = 0$, 
where the singular control $\widehat{\zeta}=(\widehat{\zeta}_t)_{t\in[0,T]}$ enters the forward dynamics of the state process rather than the backward equation. Moreover, a complementary slackness condition is imposed on the forward component $\mathbb{E}[\int_0^T\phi^i(t,\widehat{X}_t,\widehat{\mu}_t,\widehat{\alpha}_t)\eta_t^i\mathrm{d}t]=0$ for $i\in I$, where the multiplier $\eta^i=(\eta_t^i)_{t\in[0,T]}$ appears only in the backward dynamics. This cross-coupled structure leads to a new and more intricate FBSDE system.} We then verify the constraint qualification and obtain the KKT optimality condition in Lemma \ref{FBSDE_solution}. As a result, we further establish the uniqueness of the solution to the nonstandard constrained FBSDE in \eqref{FBSDE}-\eqref{FBSDE2}, see Theorem \ref{unique}. Furthermore, under some enhanced assumptions, we also prove the stability result of the solution in the sense of \eqref{stability} in Theorem \ref{stable}.
		
		The rest of the paper is organized as follows. Section \ref{sec:formulation} introduces the constrained MFC problem with singular control and standing assumptions imposed on the model. Section~\ref{sec:well-posedRelax} gives the relaxed control formulation and establishes the existence of optimal control to the constrained MFC problem by applying compactification arguments. Section \ref{sec:SMP} formulates the equivalent optimization problem with constraints in Banach space, for which we develop the SMP and {adjoint} constrained BSDE using Lagrange multipliers method. Lastly, the uniqueness and the stability result of the solution to the constrained FBSDE associated to the constrained MFC problem with singular control are discussed in Section \ref{sec:uniqueFBSDE}. The proofs of some auxiliary results are provided in \hyperref[appn]{Appendix}.
		
{\small {\bf Notations.} We list below some notations that will be frequently used throughout the paper:
\vspace{-0.2in}
\begin{center}
\begin{longtable}{ll}
$(E^*, \lv\cdot\rv_{E^*})$ & Dual space of a Banach space $(E, \lv\cdot\rv_E)$ \\ 
$\langle\cdot,\cdot\rangle_{E,E^*}$ & Pairing between Banach space $(E,\lv\cdot\rv_E)$ and its dual \\ 
$a \geq_K 0~(a >_K 0)$ & $a \in K~(a \in K^{\mathrm{o}})$, with $K$ a closed (linear) cone \\ 
$Df$ & Fr\'echet derivative of $f: E \to \R$ \\ 
$\nabla_x f$ & Gradient of $f: \R^n \to \R$ \\ 
$D_X f$ & Partial Fr\'echet derivative of $f$ with respect to $X$ \\ 
$D_{\alpha} f$ & Partial Fr\'echet derivative of $f$ with respect to $\alpha$ \\
$L^p((A,\mathscr{B}(A),\lambda_A);E)$ & Set of $L^p$-integrable $E$-valued mappings on measure space \\
& $(A,\mathscr{B}(A),\lambda_A)$; write $L^p(A;E)$ for short \\ 
$L^p((A,\mathscr{B}(A),\lambda_A);E)/\mathord{\sim}$ & Quotient set of $L^p(A;E)$ under relation $f\mathord{\sim}g$ iff $f=g$ $\lambda_A$-a.e.; \\
& write $L^p(A;E)/\mathord{\sim}$ for short \\ 
$\E$ ($\E'$) & Expectation operator under probability measure $\Pb$ ($\Pb'$) \\ 
${\cal P}_p(E)$ & Set of probability measures on $E$ with finite $p$-order moments \\
{${\cal W}_{p,E}(\mu,\nu)$} & { $p$-order Wasserstein metric between $\mu\in{\cal P}_p(E)$ and $\nu\in{\cal P}_p(E)$}\\
{$\partial_{\mu}f$} & {Lions derivative of $f:{\cal P}_2(\R^n)\to\R$ with respect to $\mu\in{\cal P}_2(\R^n)$}\\
$\Mc[0,T]$ ($\Mc^+[0,T]$) & Set of signed (non-negative) Radon measures on $[0,T]$ \\ 
$\BV^k[0,T]$ & Set of functions $f:[0,T]\to\R^k$ with bounded variation \\ 
$D^{k}[0,T]$ & Set of c\`adl\`ag functions $f:[0,T]\to\R^k$ \\
\end{longtable}
\end{center}}

\section{Problem Formulation}\label{sec:formulation}
\subsection{MFC with singular control and mixed constraints}
		
Let $(\Omega,\F,\Fb,\Pb)$ be a filtered  probability space with the filtration $\Fb=(\F_t)_{t\in[0,T]}$ satisfying the usual conditions. For $r,l,k\in\mathbb{N}$ and $p>2$, let $W=(W_t)_{t\in[0,T]}$ be a standard $r$-dimensional $(\Pb,\Fb)$-Brownian motion and $U\subset\R^l$ be a closed and convex (regular) set as the policy space. Define the space of regular controls $\U[0,T]$ as the set of $\Fb$-progressively measurable processes such that $\E[\int_0^T|\alpha_t|^p\d t]<\infty$.  For the space of singular controls, define $A^k[0,T]$ as the set of functions $a:[0,T]\mapsto\R^k$ such that $a=(a_t^i)_{t\in[0,T]}$ is c\`adl\`ag and non-decreasing with $a_0^i\geq 0$ for $i=1,\dots,k$. 
Let $\mathcal{A}^k[0,T]$ be the set of $\Fb$-progressively measurable processes $\zeta=(\zeta_t)_{t\in[0,T]}$ with paths in $A^k[0,T]$ and $\E[|\zeta_T|^p]<\infty$.
		
Let $(b,\sigma):[0,T]\times\R^n\times\Pc_2(\R^n)\times\R^l\mapsto\R^n\times\R^{n\times r}$ be measurable mappings and $G:[0,T]\mapsto\R^{n\times k}$ be a continuous function. For $(\alpha,\zeta)=(\alpha_t,\zeta_t)_{t\in[0,T]}\in\U[0,T]\times\mathcal{A}^k[0,T]$, we consider the controlled McKean-Vlasov SDE that, for $t\in(0,T]$,
\begin{align}\label{eq:controlledSDE}
\d X_t^{\alpha,\zeta} &=b(t,X_t^{\alpha,\zeta},\Law(X_t^{\alpha,\zeta}),\alpha_t)\d t +\sigma(t,X_t^{\alpha,\zeta},\Law(X_t^{\alpha,\zeta}),\alpha_t)\d W_t+G(t)\d\zeta_t,
\end{align}
and $X_0^{\alpha,\zeta}=\xi\in L^p((\Omega,\F_0,\Pb);\R^n)$ with the law of $\xi$ under $\mathbb{P}$ being $\Law(\xi)=\nu\in\Pc_p(\R^n)$. 
		
For the above controlled McKean-Vlasov dynamics, we introduce dynamic state-control-law constraints. Let $d\in\mathbb{N}$ be the number of inequality constraints. For $i\in I:=\{1,\dots,d\}$, consider $\phi^i:[0,T]\times \R^n\times\Pc_2(\R^n)\times\R^l\mapsto\R$ with $i\in I$ such that the dynamic state-control-law inequality constraints are described by, $\d t\times\d\Pb$-a.s.
\begin{align}\label{path_constraint}
\phi^i(t,X_t^{\alpha,\zeta},\Law(X_t^{\alpha,\zeta}),\alpha_t)\geq 0,
\end{align}
		where $X^{\alpha,\zeta}=(X_t^{\alpha,\zeta})_{t\in[0,T]}$ obeys the McKean-Vlasov SDE \eqref{eq:controlledSDE}. Denote by $\U_{\rm ad}[0,T]$ the admissible control set of processes $(\alpha,\zeta)\in\U[0,T]\times\mathcal{A}^k[0,T]$ such that there exists a c\`{a}dl\`{a}g $\Fb$-adapted ($\R^n$-valued) process $X^{\alpha,\zeta}=(X_t^{\alpha,\zeta})_{t\in[0,T]}$ satisfying \eqref{eq:controlledSDE} and \eqref{path_constraint}.
		
The goal of the MFC problem is to minimize the following cost functional that, for $(\alpha,\zeta)=(\alpha_t,\zeta_t)_{t\in [0,T]}\in\U_{\rm ad}[0,T]\neq\varnothing$, 
\begin{align}\label{cost_func0}
&\inf_{(\alpha,\zeta)\in\U_{\rm ad}[0,T] }J(\alpha,\zeta)\\
&\qquad:=\inf_{(\alpha,\zeta)\in\U_{\rm ad}[0,T] }\E\left[\int_0^Tf(t,X_t^{\alpha,\zeta},\Law(X_t^{\alpha,\zeta}),\alpha_t)\d t+{\int_{[0,T]} c(t)d\zeta_t+g(X_T^{\alpha,\zeta},\Law(X_T^{\alpha,\zeta})}\right],\nonumber
\end{align}
where $f:[0,T]\times\R^n\times\Pc_2(\R^n)\times\R^l\mapsto\R$, $c:[0,T]\mapsto\R^k$ and $g:\R^n\times\Pc_2(\R^n)\to\R$ are respectively the running cost function, the cost function associated with singular controls and the terminal cost function. 
		
\subsection{Standing assumptions} 
Let us first unpack some standing assumptions imposed on the model coefficients, which will be used in different sections.
\begin{ass}\label{ass1} 
\begin{itemize}
\item[{\rm(A1)}] The mappings $b,\sigma,f,g$ and $\phi^i$, for $i\in I$ are Borel measurable, $b,\sigma$ are continuous and $f,g$ is lower semi-continuous (l.s.c.) in $(x,\mu,u)\in\R^n\times\Pc_2(\R^n)\times \R^l$.

\item[{\rm(A2)}] For $i\in I$, $\phi^i$ is upper semi-continuous (u.s.c.) in $(x,\mu,u)\in\R^n\times\Pc_2(\R^n)\times\R^l$, and there exists $M>0$ such that $|\phi^i(t,x,\mu,u)|\leq M(1+|x|+\sqrt{M_2(\mu)}+|u|)$ for $(i,x,u)\in I\times\R^n\times\R^l$ and {$M_2(\mu):=\int_{\R^n}|x|^2\mu(\d x)$}. Moreover, $\phi^i$ is uniformly u.s.c. in $\mu\in \Pc_2(\R^n)$ w.r.t. $(t,x,u)\in[0,T]\times\R^n\times\R^l$.

\item[{\rm(A3)}] The mappings $b(t,x,\mu,u)$ and $\sigma(t,x,\mu,u)$ are uniformly Lipschitz continuous in $(x,\mu)\in\R^n\times\Pc_2(\R^n)$, i.e., there is $M>0$ independent of $u\in \R^l$ such that, for all $(x,\mu),(x',\mu')\in\R^n\times\Pc_2(\R^n)$ and $(t,u)\in[0,T]\times \R^l$,
\begin{align*}
\left|b(t,x',\mu',u)-b(t,x,\mu,u)\right|+\left|\sigma(t,x',\mu',u)-\sigma(t,x,\mu,u)\right|\leq M(|x-x'|+\mathcal{W}_{2,\R^n}(\mu,\mu')).
\end{align*}

\item[\rm(A4)] There is $M>0$ such that $|b(t,x,\mu,u)|+|\sigma(t,x,\mu,u)|\leq M(1+|x|+\sqrt{M_2(\mu)}+|u|)$ for $(t,x,\mu,u)\in[0,T]\times\R^n\times\Pc_2(\R^n)\times \R^l$.

\item[\rm(A5)] $G:[0,T]\mapsto\R^{n\times k}$ and $c:[0,T]\mapsto [0,\infty)^k$ are deterministic continuous functions. Moreover, there is $c_0>0$ such that $c^i(t)\geq c_0$ for all $t\in [0,T]$. 

\item[\rm(A6)] There exist $M,M_1>0$ such that, for all $(t,x,\mu,u)\in [0,T]\times\R^n\times\Pc_2(\R^n) \times\R^l$, 
{\begin{align*}
-M(1+|x|^p+M_2(\mu))+M_1|u|^p&\leq f(t,x,\mu,u)\leq M(1+|x|^p+M_2(\mu)+|u|^p),  \\
 -M(1+|x|^p+M_2(\mu))&\leq g(x,\mu)\leq-M(1+|x|^p+M_2(\mu))
\end{align*}}
{where $p>2$ is given at the beginning of this section.}
\end{itemize}
\end{ass}

\begin{ass}\label{ass3} 
			\begin{itemize}
				\item[{\rm(B1)}] The mappings $b,\sigma,f,g$ and $\phi^i$  are Borel measurable and continuously differentiable w.r.t. $x\in\R^n$.  Moreover, $\phi^i$ is jointly continuous in $(t,x,\mu)\in[0,T]\times\R^n\times\Pc_2(\R^n)$.
				
				\item[{\rm(B2)}] There exists $M>0$ such that, for all $(t,x,\mu,u)\in[0,T]\times\R^n\times\Pc_2(\R^n)\times\R^l$,
				\begin{align*}
					\sum_{i\in I}\left(|\nabla_x\phi^i(t,x,\mu,u)|+|\nabla_u\phi^i(t,x,\mu,u)|\right)&\leq M.
				\end{align*}
				\item[{\rm(B3)}] The condition {\rm(A3)} in Assumption~\ref{ass1} holds.
				\item[\rm(B4)] There exists $M>0$ such that $|\delta(t,x,\mu,u)|\leq M(1+|x|+|u|+\sqrt{M_2(\mu)})$ for $(t,x,\mu,u)\in[0,T]\times\R^n\times\Pc_2(\R^n)\times \R^l$ and $\delta\in\{b,\sigma,\nabla_ub(\cdot),\nabla_u\sigma(\cdot),\nabla_xf(\cdot),\nabla_uf(\cdot),\nabla_xg\}$.
				
				\item[\rm(B5)] The mappings $b(t,x,\mu,u),\sigma(t,x,\mu,u),f(t,x,\mu,u), g(x,\mu)$ and $\phi^i(t,x,\mu,u)$ are continuously Lions-differentiable w.r.t. $\mu\in\Pc_2(\R^n)$. Moreover, there exists $M>0$ such that, for all $(t,x,\mu,u)\in[0,T]\times\R^n\times\Pc_2(\R^n)\times \R^l$,
				\begin{align*}
					 \left(\int_{\R^n}|\pa_{\mu}f(t,x,\mu,u)(x')|^2\mu(\d x')\right)^{\frac12}&\leq M\left(1+|x|+|u|+\sqrt{M_2(\mu)}\right),\\
                    \left(\int_{\R^n}|\pa_{\mu}g(x,\mu)(x')|^2\mu(\d x')\right)^{\frac12}&\leq M\left(1+|x|+\sqrt{M_2(\mu)}\right)
				\end{align*} 
				and moreover $\sum_{i\in I}\left(\int_{\R^n\times\R^l}|\pa_{\mu}\phi^i(t,x,\mu,u)(x')|^2\mu(\d x')\right)^{\frac12}\leq M$.
\item[\rm(B6)] $G:[0,T]\mapsto\R^{n\times k}$ and $c:[0,T]\mapsto [0,\infty)^k$ are deterministic continuous functions. Moreover, there exists a constant $c>0$ such that $|G(t)-G(s)|\leq c|t-s|$ and $|G(t)\boldsymbol{\beta}|^2\geq c|\boldsymbol{\beta}|$ for any vector $\boldsymbol{\beta}\in\R^k$ and $t,s\in [0,T]$.
			\end{itemize}
		\end{ass}
		
\begin{ass}\label{ass4}
\begin{itemize}
\item [\rm (C1)] The constrained function $\phi$ is independent of the control variable and has the linear form $\phi(t,x,\mu)=A(t)x+B(t)\int_{\R^n}x\mu(\d x)+C(t)$ for $(t,x,\mu)\in[0,T]\times\R^n\times\Pc_2(\R^n)$. Here, $(A,B)=(A_t,B_t)_{t\in [0,T]}\in C([0,T];\R^n\times\R^n)$ and $C=(C(t))_{t\in[0,T]}\in C([0,T];\R^n)$ and $|A(t)|>0$ and $|A(t)+B(t)|>0$ for $t\in[0,T]$. 	
\item [\rm (C2)] The Hamiltonian $H$ defined by, $H(t,x,\mu,u,y,z):=b(t,x,\mu,u)^{\T}y+\tr(\sigma(t,x,\mu,u)^{\T}z)+f(t,x,\mu,u)$ is strictly convex in $u\in U$. There exists a measurable mapping $\hat{u}:[0,T]\times\R^n\times\Pc_2(\R^n)\times\R^n\times\R^{n\times r}\mapsto U\subset\R^l$ with at most quadratic growth in $(x,\mu,y,z)\in\R^n\times\Pc_2(\R^n)\times\R^n\times\R^{n\times r}$ such that $\hat{u}(t,x,\mu,y,z)\in\mathop{\arg\min}_{u\in U} H(t,x,\mu,u,y,z)$ and $\hat u$ is Lipschitz continuous, i.e., there is $M>0$ such that, for $(x_i,\mu_i,y_i,z_i)\in\R^n\times\Pc_2(\R^n)\times\R^n\times\R^n\times\R^{n\times r}$ with $i=1,2$, 
\begin{align*}
&|\hat u(t,x_1,\mu_1,y_1,z_1)-\hat u(t,x_2,\mu_2,y_2,z_2)|\nonumber\\
&\qquad\qquad\leq M(|x_1-x_2|+\mathcal{W}_{2,\R^n}(\mu_1,\mu_2)+|y_1-y_2|+|z_1-z_2|).    
\end{align*}
Set $\hat{b}(t,x,\mu,y,z):=b(t,x,\mu,\hat{u}(t,x,\mu,y,z))$,  $\hat{\sigma}(t,x,\mu,y,z):=\sigma(t,x,\mu,\hat{u}(t,x,\mu,y,z))$ and $\hat{f}(t,x,\mu,y,z):=f(t,x,\mu,\hat{u}(t,x,\mu,y,z))$.
				
\item [\rm (C3)] Define $F(t,x,\mu,u,y,z):=\nabla_x H(t,x,\mu,u,y,z)+\int_{\R^n}\pa_{\mu}H(t,x',\mu,u,y,z)(x)\mu(\d x')$, $\hat{F}(t,x,\mu,y,z):=F(t,x,\mu,\hat{u}(t,x,\mu,y,z),y,z)$ and $\Phi(t,x,\mu,y,z):=(-\hat{F},\hat{b},\hat{\sigma})(t,x,\mu,y,z)$. The following monotone condition holds:
\begin{align*}
& (\Phi(t,x_1,\mu,y_1,z_1)-\Phi(t,x_2,\mu,y_2,z_2))(x_1-x_2,y_1-y_2,z_1-z_2)\nonumber\\
&\qquad+\beta(|x_1-x_2|^2+|y_1-y_2|^2+|z_1-z_2|^2)\leq 0,   
\end{align*}
where $\beta>\frac{M}{2}$ with $M$ being given in {\rm(A3)}. The inner product is read as
\begin{align*}
&(\Phi(t,x_1,\mu,y_1,z_1)-\Phi(t,x_2,\mu,y_2,z_2))(x_1-x_2,y_1-y_2,z_1-z_2)\\
&\qquad=-(\hat{F}(t,x_1,\mu,y_1,z_1)-\hat{F}(t,x_2,\mu,y_2,z_2)^{\T}(x_1-x_2)\\
&\qquad\quad+(\hat{b}(t,x_1,\mu,y_1,z_1)-\hat{b}(t,x_2,\mu,y_2,z_2))^{\T}(y_1-y_2)\\
&\qquad\quad+\tr\left((\hat{\sigma}(t,x_1,\mu,y_1,z_1)-\hat{\sigma}(t,x_2,\mu,y_2,z_2))^{\T}(z_1-z_2)\right).
\end{align*}
\item [\rm (C4)] $\hat{b}$, $\hat{\sigma}$, $\hat{F}$ are Lipschitz continuous, i.e., there is $M>0$ such that, for  $(x_i,\mu_i,y_i,z_i)\in\R^n\times\Pc_2(\R^n)\times\R^n\times\R^n\times\R^{n\times r}$ with $i=1,2$, $\delta\in\{\hat{b},\hat{\sigma},\hat{F}\}$ and $x\in\R^n$, 
\begin{align*}
&|\delta(t,x_1,\mu_1,y_1,z_1)-\delta(t,x_2,\mu_2,y_2,z_2)|\nonumber\\
&\qquad\leq M(|x_1-x_2|+\mathcal{W}_{2,\R^n}(\mu_1,\mu_2)+|y_1-y_2|+|z_1-z_2|).  
\end{align*}
{\item [{\rm (C5)}] Set $\Psi(x,\mu):=\nabla_x g(x,\mu)+\int_{\R^n}\pa_{\mu} g(x',\mu)(x)\mu(\d x')$ for $(x,\mu)\in\R^n\times\Pc_2(\R^n)$ and $\Psi$ satisfies the monotone condition $(\Psi(x_1,\mu)-\Psi(x_2,\mu))(x_1-x_2)\geq 0$ and $\int_{\R^n}(\Psi(x,\mu_1)-\Psi(x,\mu_2))x'(\mu_1-\mu_2)(\d x')\geq 0$ for $x,x_1,x_2\in\R^n$ and $\mu,\mu_1,\mu_2\in\Pc_2(\R^n)$.}
\end{itemize}
\end{ass}

Assumption~\ref{ass3} strengthens Assumption~\ref{ass1} by requiring the differentiability of the coefficients and imposing growth conditions on their derivatives, while it is weaker than Assumption~\ref{ass1} by relaxing the growth condition on the running cost function $f$ and omitting the requirement for a uniform positive constant $c_0$ of the cost function $c$. Assumption~\ref{ass4} introduces the monotonicity to the coefficients and guarantees the existence of a Lipschitz continuous minimizer for the Hamiltonian. 
    
\section{Existence of Optimal Controls}\label{sec:well-posedRelax}
		
\subsection{Relaxed control formulation}
To confirm the existence of an optimal pair of regular and singular controls to problem \eqref{cost_func0}, we first reformulate the problem in a relaxed control framework such that the compactification method can be utilized. To this end, let us introduce the following basic spaces:
\begin{itemize}
\item 	The space $D^n[0,T]$ is endowed with the Skorokhod metric $d_{D}$ and the corresponding Borel $\sigma$-algebra denoted by $\F^D$. Let $\F_t^D$ be the { naturally generated} Borel $\sigma$-algebra  up to time $t\in[0,T]$ (c.f. \cite{Haussmann}).

\item 	The space $\mathcal{Q}$ of relaxed controls is defined as the set of measures $q$ in $[0,T]\times U$ with the first marginal equal to the Lebesgue measure and $\int_{[0,T]\times U}|u|^pq(\d t,\d u)<\infty$. We endow the space $\mathcal{Q}$ with the $2$-Wasserstein metric on $\Pc_2([0,T]\times U)$ given by $d_{\mathcal{Q}}(q^1,q^2)=\mathcal{W}_{2,[0,T]\times U}\left(\frac{q^1}{T},\frac{q^2}{T}\right)$, where the metric on $[0,T]\times U$ is given by $((t_1,u_1),(t_2,u_2))\mapsto|t_2-t_1|+|u_2-u_1|$. Note that each $q\in\mathcal{Q}$ can be identified with a measurable function $[0,T]\in t\mapsto q_t\in\Pc_2(U)$, defined uniquely up to $\as$ by $q(\d t,\d u)=q_t(\d u)\d t$. In the sequel, we will always refer to the measurable mapping $q=(q_t)_{t\in [0,T]}$ to a relaxed control in $\mathcal{Q}$. Let $\F^{\mathcal{Q}}$ be the Borel $\sigma$-algebra of $\mathcal{Q}$ and $\F_t^{\mathcal{Q}}$ be the $\sigma$-algebra generated by the maps $q\mapsto q([0,s]\times V)$ with $s\in [0,t]$ and Borel measurable $V\subset U$. { By the same argument of Lemma 3.2 in \cite{Lacker}, one can verify that $q=(q_t)_{t\in [0,T]}$ is $\mathbb{F}^{\mathcal{ Q}}=(\F_t^{\mathcal{Q}})_{t\in[0,T]}$-progressively measurable.} 

\item The space $A^k[0,T]$ is a subset of $\mathcal{V}\subset\BV^k[0,T]$, where the set $\mathcal{V}\subset\BV^k[0,T]$ is the collection of mappings $a:[0,T]\to\R^k$ such that $a$ is c\`adl\`ag {and $a(0)\geq 0$.} As in \cite{Haussmann} {and Section 3.5 of \cite{Folland}},  there is a natural one-to-one correspondence between $\mathcal{V}$ and $(\Mc[0,T])^k$, that is, for $a=(a^1,\ldots,a^k)\in\mathcal{V}$ and $i=1,\ldots,k$, define $\nu_i(\{0\})=a_0^i$ and $\nu_i((s,t])=a^i(t)-a^i(s)$ for $0\leq s\leq t\leq T$. Then, $\mathcal{V}$ is the dual space of $(C[0,T])^k$, and hence we endow $\mathcal{V}$ with the corresponding weak${}^*$ topology (which is also equivalent to the Fortet-Mourier norm on ($\Mc[0,T])^k$ {introduced in \eqref{FM_0Tnorm} below, or c.f. \cite{BWWY}}). Thus, $A^k[0,T]$ is a closed subset of $(\Mc[0,T])^k$ {by Portmaneau theorem ($\nu^i(\{0\})\geq\limsup_{n\to\infty}\nu_i^n(\{0\})\geq 0$ for any $i=1,\dots,k$)} and can also be viewed as $(\Mc^+[0,T])^k$. Let $\F^{A}$ be the Borel $\sigma$-algebra and $\F^{A}_t$ be the { naturally generated} Borel $\sigma$-algebra defined up to time $t\in[0,T]$ (cf. \cite{Haussmann}).
\end{itemize}
		
Now, we introduce the sample space considered in the paper, which is given by  
\begin{align}\label{eq:samplespace}
\Omega:=\widehat{\Omega}\times A^k[0,T],\quad {\widehat{\Omega}= D^n[0,T]\times \mathcal{Q}.}
\end{align}
Equip $\widehat{\Omega}$ and $\Omega$ with the respective product metrics $d_{\widehat{\Omega}}(\widehat{\omega}^1,\widehat{\omega}^2)=d_{D}(\psi^1,\psi^2)+d_{\mathcal{Q}}(q^1,q^2)$ and $d_{\Omega}(\omega^1,\omega^2)=d_{\widehat{\Omega}}(\widehat{\omega}^1,\widehat{\omega}^2)+\sum_{i=1}^kd_{\rm FM}(\nu_i^1,\nu_i^2)$ for $\omega^i=(\widehat{\omega}^i,\zeta^i)=(\psi^i,q^i,\zeta^i)\in\Omega$ with $i=1,2$. Here, $\nu^i$ is the correspondence of $\zeta^i$ in $\mathcal{V}$ and $d_{\rm FM}$ is {the Fortet-Mourier metric on $\mathcal{M}[0,T]$ that, for any $\mu,\nu\in{\cal M}[0,T]$,
\begin{align}\label{FM_0Tnorm}
d_{\rm FM}(\mu,\nu) := \sup_{\substack{\|f\|_\infty+{\rm Lip}(f)\le 1}} \left|\int_{[0,T]} f\,d\mu-\int_{[0,T]} f\,d\nu\right|,     
\end{align}
which induces the weak convergence of finite measures. We adopt this metric as it is compatible with weak convergence while preserving completeness and separability, ensuring that the resulting product spaces remain Polish.}
		
Set $\F:=\F^{D}\otimes\F^{\mathcal{Q}}\otimes\F^{A}$ as the product $\sigma$-algebra and define the filtration $\Fb=(\F_t)_{t\in[0,T]}$ by $\F_t=\F_t^{D}\otimes\F_t^{\mathcal{Q}}\otimes\F_t^{A}$ for $t\in[0,T]$. For any $\omega=(\psi,q,\zeta)\in\Omega$, let $X_t(\omega)=\psi_t$, $q_t(\omega)=q_t$ and $\zeta_t(\omega)=\zeta_t$ for $t\in[0,T]$ be the coordinate process. Denote by $\Pc_2(\Omega;\widehat{\Omega})$ the set of probability measures $R$ on $(\Omega,\F)$ such that $R|_{\widehat{\Omega}}\in\Pc_2(\widehat{\Omega})$, {where $R|_{\widehat{\Omega}}$ denotes the marginal distribution of $R$ to $\widehat{\Omega}$}. Introduce the corresponding metric on $\Pc_2(\Omega;\widehat{\Omega})$ by, for any $R_1,R_2\in\Pc_2(\Omega;\widehat{\Omega})$,
\begin{align*}
d(R_1,R_2)=\mathcal{W}_{2,\widehat{\Omega}}(R_1|_{\widehat{\Omega}},R_2|_{\widehat{\Omega}})+d_{\rm FM,\Omega}(R_1,R_2),   
\end{align*}
{where, we recall that $\mathcal{W}_{2,\widehat{\Omega}}$ denotes the $2$-order Wasserstein distance on $\Pc_2(\widehat{\Omega})$, and $d_{\rm FM,\Omega}$ is the Fortet--Mourier metric on $\Pc(\Omega)$, defined by, for any $\mu,\nu\in\Pc(\Omega)$,
\begin{align}
d_{\rm FM,\Omega}(\mu,\nu):=\sup_{\substack{\|f\|_\infty\le1+{\rm Lip}(f)\le1}}\left|\int_\Omega f\,d\mu-\int_\Omega f\,d\nu\right|.
\end{align}} 
As a result, $R_n\to R$ in $\Pc_2(\Omega;\widehat{\Omega})$ iff $R_n|_{\widehat{\Omega}}\to R|_{\widehat{\Omega}}$ in $\Pc_2(\widehat{\Omega})$ and $R_n$ weakly converges to $R$ as $n\to\infty$.
		
Next, we give the definition of relaxed controls. In particular, one important task is to guarantee that the dynamic state-control-law constraints are fulfilled when applying the compactification argument. For this purpose, we propose to embed the constraints into the definition of admissible control rules in the following sense. Recall that $\nu\in\Pc_p(\R^n)$ is the law of initial state in SDE \eqref{eq:controlledSDE}. 
		
In the sequel, we will always consider the weak formulation, $\ie$, we will take $(\Omega,\F,\Fb,\Pb)$ introduced in this subsection to be the canonical space.
\begin{definition}\label{admissible}
An admissible control rule is a probability measure $R\in\Pc_2(\Omega;\widehat{\Omega})$ on $(\Omega,\F)$ such that
\begin{itemize}
\item[\rm (i)] $R\circ X_0^{-1}=\nu$; 
\item[\rm(ii)] $\E^R\left[\int_0^T\int_U|u|^pq_t(\d u)\d t+|\zeta_T|^p\right]<\infty$; 
%\item[\rm (ii)] $\E^R\left[\int_0^T\int_U|u|^pq_t(\d u)\d t+|\zeta_T|^p\right]<\infty$;
\item[\rm (iii)] for all $\psi\in C_b^2(\R^n)$, $M^{R}\psi=(M_t^{R}\psi)_{t\in[0,T]}$ is an $(R,\Fb)$-martingale, where 
\begin{align*}
M_t^R\psi&:=\psi(X_t)-\int_0^t\int_U\mathbb{G} \psi(s,X_s,R^X(s),u)q_s(\d u)\d s-\int_0^t\nabla_x\psi(X_{s-})G(s)\d\zeta_s\\
&\quad-\sum_{0<s\leq t}\left(\psi(X_s)-\psi(X_{s-})-\nabla_x\psi(X_{s-})\Delta X_s\right)
\end{align*}
with $R^X(s):=R\circ X_s^{-1}$, $\Delta X_s:=X_s-X_{s-}$ and 
\begin{align*}			\mathbb{G}\psi(t,x,\mu,u):=b(t,x,\mu,u)\nabla_x\psi(x)+\frac12\tr\left(\sigma\sigma^{\T}(t,x,\mu,u)\nabla_{xx}^2\psi(x)\right).
\end{align*}
\item[\rm (iv)] for $i\in I$, $(m\times R)(A_i^R)=T$ with the admissible set $A_i^R$ defined by
\begin{align}\label{eq:AiR}
A_i^R=\left\{(t,\omega)\in[0,T]\times\Omega;~ \int_U\phi^i(t,X_t(\omega),R^X(t),u)q_t(\omega,\d u)\geq 0\right\},
\end{align}
where $m\otimes R$ denotes the product measure of {Lebesgue} measure $m$ and $R$.
\end{itemize}
\end{definition}

\begin{remark}
    To the best of our knowledge, the formulation of the relaxed constraint condition in Definition~\ref{admissible}-(iv) is novel within the compactification approach. Unlike the existing work, we incorporate the original state-control-law constraint directly into the definition of admissible probability measures on the canonical space. Such definition, to the best of our knowledge, has not been considered before in the compactification literature.
    \end{remark}

		\subsection{Existence of optimal relaxed controls}
		
		For $\nu\in\Pc_p(\R^n)$, let $\mathcal{R}_{\rm ad}(\nu)$ be the set of admissible control rules $R$ in Definition~\ref{admissible}. For $R\in\mathcal{R}_{\rm ad}(\nu)$, we have the following {martingale} measure characterization:
\begin{lemma}\label{moment_p}
$R\in\mathcal{R}_{\rm ad}(\nu)$ iff there is a filtered probability space $(\Omega',\F',\Fb'=(\F'_t)_{t\in[0,T]},P')$ supporting an $n$-dimensional $\Fb'$-adapted process $X=(X_t)_{t\in[0,T]}$, $r$-dimensional orthogonal $\Fb'$-martingale measures $N=(N^1,\ldots,N^r)$ on $U\times[0,T]$, each with $\F'_t$-progressively measurable intensity $q_t(\d u)\d t$ and a $k$-dimensional $\Fb'$-progressively measurable non-decreasing process $\zeta=(\zeta_t)_{t\in[0,T]}$ such that $R=P'\circ (X,q,\zeta)^{-1}$, and the following conditions hold:
			\begin{itemize}
				\item[{\rm(i)}] $P'\circ X_0^{-1}=\nu$; 
                \item[{\rm(ii)}] $\E^{P'}\left[\int_0^T\int_U|u|^pq_t(\d u)\d t+|\zeta_T|^p\right]<\infty$;
				
				\item[{\rm(iii)}] the state dynamics obeys that, $P'$-a.s.
				\begin{align*}
					\d X_t=\int_Ub(t,X_t,\Law^{P'}(X_t),u)q_t(\d u)\d t+\int_U\sigma(t,X_t,\Law^{P'}(X_t),u)N(\d u,\d t)+G(t)\d\zeta_t,
				\end{align*}
				where $\mathcal{L}^{P'}(X_t)$ denotes the law of $X_t$ under $P'$.            
				\item[{\rm(iv)}] the constraints hold that, for $m\otimes P'$-$\as$, $\int_U\phi^i(t,X_t,\Law^{P'}(X_t),u)q_t(d u)\geq 0$ for $i\in I$.
			\end{itemize}
			Moreover, there exists $C>0$ depending on $M,\nu,p,T$ such that $\E^{P'}[\sup_{t\in[0,T]}|X_t|^p]\leq C\left\{1+\E\left[|\xi|^p\right]\right\}$ with $M$ stated in Assumption~\ref{ass1}.
			%\begin{align*}
			%	\E^{P'}\left[\sup_{t\in[0,T]}|X_t|^p\right]\leq C\left\{1+\E\left[|\xi|^p\right]\right\}.
			%\end{align*}
		\end{lemma}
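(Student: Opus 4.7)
The plan is to treat this as a martingale-problem-to-SDE equivalence for McKean--Vlasov dynamics augmented by singular control, and to derive the moment bound by a Burkholder--Davis--Gundy plus Gr\"onwall argument. The equivalence runs in two directions, each of which follows a well-trodden route but needs an adaptation to accommodate the singular part $G(s)\d\zeta_s$.

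\textbf{Construction of $(N,\zeta,X)$ (the ``only if'' direction).} Take $(\Omega',\F',\Fb',P')=(\Omega,\F,\Fb,R)$ with $X,q,\zeta$ the canonical processes, so that (i), (ii), (iv) are inherited directly from \defref{admissible} together with the identity $R^X(s)=\Law^{P'}(X_s)$. Applying the martingale property of $M^Rg$ to (locally $C_b^2$ truncations of) the coordinate functions $g(x)=x^i$ and $g(x)=x^ix^j$, one reads off that
\[
M_t:=X_t-X_0-\int_0^t\!\!\int_U b(s,X_s,R^X(s),u)q_s(\d u)\d s-\int_0^t G(s)\d\zeta_s
\]
is a continuous local $(R,\Fb)$-martingale whose quadratic variation density equals $\int_U\sigma\sigma^{\T}(s,X_s,R^X(s),u)q_s(\d u)$. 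Subtracting the finite-variation singular part \emph{before} invoking the representation allows one to apply the orthogonal martingale measure extension (in the spirit of the El Karoui--M\'el\'eard construction, and used in \cite{Haussmann} for relaxed singular control): possibly on an enlargement of $(\Omega',\F',\Fb',R)$, there exist $r$-dimensional orthogonal $\Fb'$-martingale measures $N=(N^1,\ldots,N^r)$ on $U\times[0,T]$ with intensity $q_t(\d u)\d t$ such that $M_t=\int_0^t\int_U\sigma(s,X_s,\Law^{P'}(X_s),u)N(\d u,\d s)$. This yields (iii) of the target weak formulation.

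\textbf{The converse (``if'').} Given the weak solution on $(\Omega',\F',\Fb',P')$, apply the c\`adl\`ag It\^o formula to $g(X_t)$ for $g\in C_b^2(\R^n)$: the drift produces $\int_0^t\int_U\mathbb{G}g(s,X_s,\Law^{P'}(X_s),u)q_s(\d u)\d s$; the stochastic integral against $N$ is a local $\Fb'$-martingale that is upgraded to a true martingale using the growth of $\sigma$ in (A4), boundedness of $\nabla_xg$, and (ii); the continuous part of $\d\zeta$ contributes $\int_0^t\nabla_xg(X_{s-})G(s)\d\zeta_s^c$; and the jump part contributes $\sum_{0<s\le t}[g(X_s)-g(X_{s-})]$, which splits into $\sum\nabla_xg(X_{s-})\Delta X_s$ plus the second-order remainder appearing in $M_t^Rg$ (using $\Delta X_s=G(s)\Delta\zeta_s$). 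Regrouping identifies $M^Rg$ with this true martingale under $R:=P'\circ(X,q,\zeta)^{-1}$, and (i), (ii), (iv) of \defref{admissible} transfer directly, so $R\in\mathcal{R}_{\rm ad}(\nu)$.

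\textbf{Moment bound and main obstacle.} Apply It\^o to $|X_t|^p$ (smoothing $|\cdot|^p$ if required), split the drift, quadratic variation, singular, and pure-jump contributions, and control the martingale-measure integral via BDG combined with Jensen's inequality in the probability kernel $q_s(\d u)$. Using (A4) and $M_2(\Law^{P'}(X_s))\le\E^{P'}[|X_s|^2]$ leads to
\[
\E^{P'}\!\Big[\sup_{r\le t}|X_r|^p\Big]\le C_1\Big(1+\E^{P'}[|\xi|^p]+\E^{P'}\!\Big[\!\int_0^T\!\!\int_U|u|^pq_s(\d u)\d s+|\zeta_T|^p\Big]\Big)+C_2\int_0^t\E^{P'}\!\Big[\sup_{r\le s}|X_r|^p\Big]\d s,
\]
after which Gr\"onwall's inequality yields the stated estimate (with the control moments absorbed into the constant via (ii)). The principal technical difficulty is making the orthogonal martingale-measure extension rigorous in the presence of jumps from $\zeta$; the resolution is to isolate the continuous martingale part of $X-\int_0^{\cdot}G(s)\d\zeta_s$ before invoking the representation, after which the remaining steps are standard bookkeeping.
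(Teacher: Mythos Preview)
Your proposal is correct and follows essentially the same route as the paper, which simply cites El Karoui--M\'el\'eard \cite{Karoui} for the martingale-problem/SDE equivalence and declares the moment estimate ``basic'' under \assref{ass1}; you have merely spelled out the details of both steps. One small caveat: the control moments $\E^{P'}[\int_0^T\!\int_U|u|^p q_t(\d u)\d t+|\zeta_T|^p]$ that appear in your Gr\"onwall estimate cannot literally be absorbed into a constant depending only on $M,\nu,p,T$ as the lemma's wording suggests---the constant necessarily depends on $R$ through (ii)---but this is an imprecision in the paper's statement rather than a flaw in your argument, and it causes no trouble downstream since the lemma is only applied to sets of rules with uniformly bounded control moments (cf.\ \eqref{uniformbounded}).
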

		
		\begin{proof}
			The first assertion follows from {\cite{Karoui}}, and the second assertion can be easily proved by basic moment estimations under Assumption~\ref{ass1}.
		\end{proof}
		
		Let us define the cost functional on $\mathcal{R}_{\rm ad}(\nu)$ by
		\begin{align}\label{eq:objectGammaR}
			\Gamma(R):=\E^R\left[\int_0^T\int_U f(t,X_t,R^X(t),u)q_t(\d u)\d t+{\int_{[0,T]} c(t)d\zeta_t+g(X_T,R^X(T))}\right]. 
		\end{align}
		By using Assumption~\ref{ass1}, $v^*:=\inf_{R\in\mathcal{R}_{\rm ad}(\nu)}\Gamma(R)\in\R$. Then, the optimal control rule set is given by 
		\begin{align}\label{eq:Roptnu}
			\mathcal{R}_{\rm opt}(\nu):={\{R\in\mathcal{R}_{\rm ad}(\nu):~\Gamma(R)=v^*\}}.
		\end{align}
		
		\begin{remark}\label{rem:valuefcn2s}
			For any strict admissible control $(\alpha,\zeta)\in\U_{\rm ad}[0,T]$, let $X^{\alpha,\zeta}$ be the state process obeyed by \eqref{eq:controlledSDE} and $R$ be the joint law of $(X^{\alpha,\zeta}_{\cdot},\delta_{\alpha_{\cdot}}(\d u)\d t,\zeta_{\cdot})$. Then, $R$ can be easily verified to be an admissible control and $J(\alpha,\zeta)=\Gamma(R)$ with $J(\alpha,\zeta)$ defined in \eqref{cost_func0}. This yields that $v^*\leq\inf_{(\alpha,\zeta)\in\U_{\rm ad}[0,T]}J(\alpha,\zeta)$.
		\end{remark}
		To arrive at the reverse inequality in Remark \ref{rem:valuefcn2s}, for $(t,x,\nu)\in[0,T]\times\R^n\times\Pc_2(\R^n)$, define a subset of $\R^n\times\R^{n\times n}\times\R\times\R^m$ by
		\begin{align}\label{eq:Ktxnu}
			& K(t,x,\nu)\\
			&\quad:=\left\{(b(t,x,\nu,u),\sigma\sigma^{\T}(t,x,\nu,u),z,w);~z\geq f(t,x,\nu,u), w_i\leq \phi^i(t,x,\nu,u),~i\in I,u\in U\right\}. \nonumber
		\end{align}
		We make the following assumption on the set $K(t,x,\nu)$:
		\begin{ass}\label{ass2}
			For any $(t,x,\nu)\in[0,T]\times\R^n\times\Pc_2(\R^n)$, the set $K(t,x,\nu)$ is convex. 
		\end{ass}
Then, we have the following lemma whose proof is provided in \hyperref[appn]{Appendix}.	
\begin{lemma}\label{strictcontrol}
Let	 Assumption~\ref{ass2} hold. For any admissible (relaxed) control rule $R\in\mathcal{R}_{\rm ad}(\nu)$, there exists an admissible control $(\alpha,\zeta)\in\U_{\rm ad}[0,T]$ such that $J(\alpha,\zeta)\leq \Gamma(R)$.
\end{lemma}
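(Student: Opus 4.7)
The plan is to convert the relaxed control rule $R$ into a strict pair $(\alpha,\zeta)$ via a Filippov-type measurable selection argument on the convex set $K(t,x,\nu)$, followed by a martingale representation step that replaces the orthogonal martingale measure $N$ by a Brownian integral driven through $\sigma(\cdot,\alpha_\cdot)$. First, I invoke \lemref{moment_p} to realize $R$ on a probability space $(\Omega',\F',\Fb',P')$ carrying $X$, the relaxed kernel $q=(q_t(\d u))_{t\in[0,T]}$, the singular control $\zeta$, and orthogonal martingale measures $N=(N^1,\dots,N^r)$ with intensity $q_t(\d u)\d t$. For $(t,\omega)\in[0,T]\times\Omega'$, I average the coefficients against $q_t(\omega,\cdot)$ to define
\[
\bar b(t):=\int_Ub(t,X_t,R^X(t),u)q_t(\d u),\quad \overline{\sigma\sigma^\T}(t):=\int_U(\sigma\sigma^\T)(t,X_t,R^X(t),u)q_t(\d u),
\]
and analogously $\bar f(t)$ and $\bar\phi^i(t)$ for $i\in I$. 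By \assref{ass2}, the set $K(t,X_t,R^X(t))$ is convex, so the tuple $(\bar b(t),\overline{\sigma\sigma^\T}(t),\bar f(t),\bar\phi^i(t))$ lies in $K(t,X_t(\omega),R^X(t))$ for $m\otimes P'$-a.e.\ $(t,\omega)$, with $\bar\phi^i(t)\geq 0$ inherited from item (iv) of \defref{admissible}.

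Next, I apply a Filippov-type measurable selection result to extract an $\Fb'$-progressively measurable mapping $\alpha:[0,T]\times\Omega'\to U$ obeying
\[
b(t,X_t,R^X(t),\alpha_t)=\bar b(t),\quad (\sigma\sigma^\T)(t,X_t,R^X(t),\alpha_t)=\overline{\sigma\sigma^\T}(t),
\]
together with $f(t,X_t,R^X(t),\alpha_t)\leq\bar f(t)$ and $\phi^i(t,X_t,R^X(t),\alpha_t)\geq\bar\phi^i(t)\geq 0$ for every $i\in I$. The lower bound on $f$ from (A6), namely $f\geq -M(1+|x|^p+\sqrt{M_2(\mu)})+M_1|u|^p$, combined with finiteness of $\Gamma(R)$ forces $\E^{P'}[\int_0^T|\alpha_t|^p\d t]<\infty$, so that $\alpha\in\U[0,T]$, while $\zeta\in\mathcal{A}^k[0,T]$ already by \lemref{moment_p}.

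Then the continuous $\R^n$-valued $\Fb'$-martingale $M_t:=\int_0^t\int_U\sigma(s,X_s,R^X(s),u)N(\d u,\d s)$ has quadratic variation $\int_0^t\overline{\sigma\sigma^\T}(s)\d s=\int_0^t(\sigma\sigma^\T)(s,X_s,R^X(s),\alpha_s)\d s$, so a standard martingale representation (enlarging $(\Omega',\F',\Fb',P')$ by an auxiliary Brownian motion if necessary) yields an $r$-dimensional Brownian motion $W$ with $M_t=\int_0^t\sigma(s,X_s,R^X(s),\alpha_s)\d W_s$. Consequently $X$ satisfies the strict-control McKean--Vlasov SDE with coefficients evaluated at $\alpha_t$ and $\Law^{P'}(X_t)=R^X(t)$, hence $(\alpha,\zeta)\in\U_{\rm ad}[0,T]$ and the cost comparison
\[
J(\alpha,\zeta)=\E^{P'}\left[\int_0^Tf(t,X_t,R^X(t),\alpha_t)\d t+\int_0^Tc(t)\d\zeta_t\right]\leq\Gamma(R)
\]
follows immediately from $f(t,X_t,R^X(t),\alpha_t)\leq\bar f(t)$.

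The main obstacle I anticipate is executing the measurable selection cleanly: $K(t,x,\nu)$ is a multi-valued map whose image is convex thanks to \assref{ass2}, but one must produce a single progressively measurable selector $\alpha$ that simultaneously matches the averaged drift and diffusion, dominates the averaged running cost, and is dominated by the averaged constraint coordinates, while retaining the $L^p$-moment estimate needed for $\U[0,T]$. Coordinating this with the subsequent martingale representation step on a possibly enlarged probability space—so that neither the joint law of $(X,\zeta)$ nor the pathwise constraint inherited from \defref{admissible}(iv) is disturbed—is the delicate part.
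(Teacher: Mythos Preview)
Your proposal is correct and follows essentially the same route as the paper: both arguments average the coefficients against $q_t$, use the convexity of $K(t,x,\nu)$ from \assref{ass2} to guarantee the existence of a strict selector $\alpha_t\in U$ matching $\bar b$, $\overline{\sigma\sigma^\T}$, dominating $\bar\phi^i$ and dominated by $\bar f$, and then deduce the $L^p$-bound on $\alpha$ from the coercivity in (A6). The only presentational differences are that the paper makes the measurable-selection step explicit by truncating to $U^N(t,\omega):=\{u\in U(t,\omega);|u|\le N\}$, applying Leese's theorem on each level, and patching via $\alpha=\sum_N\alpha^N\mathbf{1}_{B_N\setminus B_{N-1}}$ (which handles the possible non-compactness of $U(t,\omega)$ that a plain Filippov invocation glosses over), while you are instead more explicit about the Brownian representation step, invoking \lemref{moment_p} and then a standard martingale representation to pass from $N$ to $W$; the paper simply verifies that the strict-control generator reproduces the same $(R,\Fb)$-martingale $M^R g$ and leaves the passage to an actual weak solution of \eqref{eq:controlledSDE} implicit.
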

		
We next study the existence of optimal relaxed controls. Consider the McKean-Vlasov SDE \eqref{eq:controlledSDE} and define $Y_t:=X_t^{\alpha,\zeta}-\int_0^tG(s)\d\zeta_s$ for $t\in[0,T]$. It follows from the dynamics \eqref{eq:controlledSDE} that
\begin{align*}
\d Y_t=b(t,X_t^{\alpha,\zeta},\Law(X_t^{\alpha,\zeta}),\alpha_t)\d t+\sigma(t,X_t^{\alpha,\zeta},\Law(X_t^{\alpha,\zeta}),\alpha_t)\d W_t,~~Y_0=\xi.  
\end{align*}
Motivated by this expression, we have the next result whose proof is omitted because the necessary part is obvious and the sufficiency can be shown by the classical martingale argument { (c.f. Proposition 3.1 in \cite{Haussmann})}.
\begin{lemma}\label{equivalenceY}
Let $R\in\Pc_2(\Omega;\widehat{\Omega})$. Then, $R\in\mathcal{R}_{ad}(\nu)$ iff there exists an $\Fb$-adapted process $Y=(Y_t)_{t\in [0,T]}$ such that
\begin{itemize}
\item[\rm (i)] $Y$ has continuous trajectories and $R(X_{\cdot}=Y_{\cdot}+\int_0^{\cdot}G(s)\d\zeta_s)=1$; {\rm(ii)} $R\circ Y_0^{-1}=\nu$; 
\item[\rm (iii)] $\E^R\left[\int_0^T\int_U|u|^pq_t(\d u)\d t+|\zeta_T|^p\right]<\infty$; {\rm(iv)} for any $\psi\in C_b^2(\R^n)$, $\tilde{M}_t^R\psi$ is a $(R,\Fb)$-martingale, where $\tilde{M}_t^R\psi:=\psi(Y_t)-\int_0^t\int_U\tilde{\mathbb{G}}\psi(s,X_s,R^X(s),u)(Y_s)q_s(\d u)\d s$ with 
\begin{align*}
\tilde{\mathbb{G}}\psi(t,x,\mu,u)(y):=b(t,x,\mu,u)\nabla_x\psi(y)+\frac12\tr(\sigma\sigma^{\T}(t,x,\mu,u)\nabla_{xx}^2\psi(y)).    
\end{align*}
\item[\rm (v)] for $i\in I$, we have $(m\times R)(A_i^R)=T$.
\end{itemize}
\end{lemma}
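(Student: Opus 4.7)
The plan is to verify the two directions by exploiting the decomposition $X_t = Y_t + \int_0^t G(s)\d\zeta_s$ together with a standard martingale-problem identification.

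For necessity, I start with $R \in \mathcal{R}_{\rm ad}(\nu)$ and invoke \lemref{moment_p} to obtain a filtered space on which $X$ solves the controlled McKean--Vlasov SDE driven by orthogonal martingale measures $N$ with intensity $q_t(\d u)\d t$ together with the singular term $G(s)\d\zeta_s$. Setting $Y_t := X_t - \int_0^t G(s)\d\zeta_s$, the only source of jumps in $X$ is absorbed, so $Y$ has continuous paths, $Y_0 = X_0$ preserves the initial law $\nu$, the moment bound in (iii) is identical to (ii) of \defref{admissible}, and the constraint (v) is exactly (iv) of \defref{admissible}. Applying the classical It\^{o} formula to $g(Y_t)$ for $g \in C_b^2(\R^n)$ immediately produces $\tilde{M}^R g$ as the stochastic integral against $N$, hence an $(R,\Fb)$-martingale, verifying (iv).

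For sufficiency, suppose $Y$ with properties (i)--(v) exists and set $X_t := Y_t + \int_0^t G(s)\d\zeta_s$. Conditions (i)--(iii) of \defref{admissible} follow at once from (ii)--(iii) of the lemma together with (i), and condition (iv) of \defref{admissible} is just (v). The crux is condition (iii). From (iv) and the classical El Karoui--M\'el\'eard-type martingale representation applied to the continuous process $Y$, there exist orthogonal $\Fb$-martingale measures $N$ with intensity $q_t(\d u)\d t$ such that $\d Y_t = \int_U b(t,X_t,R^X(t),u)q_t(\d u)\d t + \int_U \sigma(t,X_t,R^X(t),u) N(\d u,\d t)$. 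I then apply It\^o's formula for semimartingales with jumps to $g(X_t) = g(Y_t + \int_0^t G(s)\d\zeta_s)$. Because $Y$ is continuous and $\int_0^{\cdot} G(s)\d\zeta_s$ is of finite variation with $\Delta X_s = G(s)\Delta\zeta_s$, the brackets satisfy $[X] = [Y]$, so the second-order term reproduces the $\sigma\sigma^{\T}$ part of $\mathbb{G}g$. The first-order expansion splits into the drift integral against $q_s(\d u)\d s$, the BV integral $\int_0^t \nabla_x g(X_{s-})G(s)\d\zeta_s$, the stochastic integral against $N$, and the jump-correction sum, reorganising the identity into $M^R g = g(X_0) + \int_0^t \int_U \nabla_x g(X_s)\sigma(\cdots,u) N(\d u,\d s)$, which is an $(R,\Fb)$-martingale.

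The main technical care lies in this It\^o bookkeeping: one must check that $\nabla_x g(X_{s-})$ may be replaced by $\nabla_x g(X_s)$ inside integrals driven by the continuous semimartingale $Y$ (they differ on the at most countable jump set of $\zeta$, which is null for the continuous part and the covariation measure $\d[Y]_s$), and that the singular BV piece contributes nothing to the second-order term. Once these identifications are in place, the pieces dovetail exactly into the definition of $M^R g$, so its $(R,\Fb)$-martingale property is equivalent to the martingale property of the representation integral against $N$, closing the sufficiency side of the equivalence.
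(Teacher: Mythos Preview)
Your proposal is correct and aligns with what the paper intends: the paper omits the proof entirely, remarking only that the necessary part is obvious and the sufficiency can be shown by the classical martingale argument, which is exactly the route you take via \lemref{moment_p} (for necessity) and the El~Karoui--M\'el\'eard representation plus It\^o bookkeeping (for sufficiency). The care you flag about replacing $X_{s-}$ by $X_s$ in the $\d s$- and $N(\d u,\d s)$-integrals, and about the finite-variation piece not contributing to $[X^c]$, is precisely what makes the identification $M^R g = g(X_0) + \int_0^t\int_U \nabla_x g(X_s)\sigma(\cdots,u)\,N(\d u,\d s)$ go through.
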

		
Moreover, we have
\begin{lemma}\label{lsc}
The objective functional $R\mapsto\Gamma(R)$ defined by \eqref{eq:objectGammaR} is l.s.c. on $\mathcal{R}_{\rm ad}(\nu)$.
\end{lemma}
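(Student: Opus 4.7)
The plan is to decompose $\Gamma(R)=F_1(R)+F_2(R)$ into the running-cost piece $F_1(R):=\E^R[\int_0^T\int_U f(t,X_t,R^X(t),u)q_t(\d u)\d t]$ and the singular-cost piece $F_2(R):=\E^R[\int_0^T c(t)\d\zeta_t]$, and to show lower semicontinuity of each piece separately. Fix a sequence $R_n\to R$ in $\Pc_2(\Omega;\widehat\Omega)$; if $\liminf_n\Gamma(R_n)=+\infty$ there is nothing to prove, so after passing to a subsequence we may assume $\sup_n\Gamma(R_n)<\infty$.

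\textbf{Skorokhod realisation.} Since $R_n\to R$ weakly on the Polish space $\Omega$ and $R_n|_{\widehat\Omega}\to R|_{\widehat\Omega}$ in $\Pc_2(\widehat\Omega)$, apply Skorokhod's representation theorem to obtain $(\tilde\Omega,\tilde\Pb)$ and random elements $(\tilde X^n,\tilde q^n,\tilde\zeta^n)\sim R_n$, $(\tilde X,\tilde q,\tilde\zeta)\sim R$ with $(\tilde X^n,\tilde q^n,\tilde\zeta^n)\to(\tilde X,\tilde q,\tilde\zeta)$ $\tilde\Pb$-a.s. in $d_\Omega$ and $\tilde\E[d_{\widehat\Omega}((\tilde X^n,\tilde q^n),(\tilde X,\tilde q))^2]\to 0$. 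By \lemref{moment_p}, $\sup_n\tilde\E[\sup_{t\in[0,T]}|\tilde X^n_t|^p]<\infty$; combined with the a.s.\ Skorokhod convergence this gives $R_n^X(t)\to R^X(t)$ in $\Pc_2(\R^n)$ at every continuity point of $t\mapsto R^X(t)$, i.e.\ for Lebesgue a.e.\ $t\in[0,T]$.

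\textbf{The singular-cost term.} Since $c\in C([0,T];[0,\infty)^k)$ by (A5) and each $\tilde\zeta^n$ is non-decreasing, the functional $\omega\mapsto\int_0^T c(t)\d\zeta_t(\omega)$ is nonnegative and continuous on $\Omega$ in the weak$^*$/Fortet--Mourier topology. Hence $\int_0^T c\,\d\tilde\zeta^n\to\int_0^T c\,\d\tilde\zeta$ $\tilde\Pb$-a.s., and Fatou's lemma yields $\liminf_n F_2(R_n)\geq F_2(R)$.

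\textbf{The running-cost term.} Use the lower bound in (A6) to set $\bar f(t,x,\mu,u):=f(t,x,\mu,u)+M(1+|x|^p+\sqrt{M_2(\mu)})$, which is l.s.c.\ in $(x,\mu,u)$ and bounded below by $M_1|u|^p\geq 0$. Write $F_1(R_n)=\tilde\E[\int_0^T\int_U\bar f(t,\tilde X^n_t,R_n^X(t),u)\tilde q^n_t(\d u)\d t]-M\tilde\E[\int_0^T(1+|\tilde X^n_t|^p+\sqrt{M_2(R_n^X(t))})\d t]$. For the nonnegative l.s.c.\ part, for $\tilde\Pb$-a.e.\ $\omega$ the laws $\tilde q^n(\omega)\to\tilde q(\omega)$ in $\Pc_2([0,T]\times U)$, the path $\tilde X^n(\omega)\to\tilde X(\omega)$ in Skorokhod topology, and $R_n^X(t)\to R^X(t)$ in $\Pc_2(\R^n)$ at a.e.\ $t$; hence along any $(t_n,u_n)\to(t,u)$ with $t$ a continuity point of $\tilde X(\omega)$ and of $t\mapsto R^X(t)$, the l.s.c.\ of $\bar f$ in $(x,\mu,u)$ from (A1) and the $\Pc_2$-convergence of $R_n^X(t)$ give $\liminf_n\bar f(t_n,\tilde X^n_{t_n}(\omega),R_n^X(t_n),u_n)\geq\bar f(t,\tilde X_t(\omega),R^X(t),u)$. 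A portmanteau/Fatou argument for weak convergence of $\tilde q^n(\omega)$ against a nonnegative l.s.c.\ integrand, followed by Fatou in $\omega$, yields pathwise and then in expectation $\liminf_n\tilde\E[\int\bar f\,\tilde q^n]\geq\tilde\E[\int\bar f\,\tilde q]$. For the subtracted piece, $\sqrt{M_2(R_n^X(t))}$ is uniformly bounded by \lemref{moment_p} and converges pointwise a.e., so bounded convergence handles it; for $\int_0^T|\tilde X^n_t|^p\d t$ one combines the uniform $L^p$ bound from \lemref{moment_p} with the a.s.\ Skorokhod convergence and the equi-integrability extracted from $\sup_n\Gamma(R_n)<\infty$ (via the lower bound in (A6) and (A5) which forces uniform $L^p$ control of the relaxed regular control) to obtain convergence in $L^1(\tilde\Pb)$.

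\textbf{Main obstacle.} The delicate point is handling the simultaneous convergence of the three coupled arguments of $f$, namely the state $\tilde X^n_t$, the law-marginal $R_n^X(t)$, and the control measure $\tilde q^n_t$, while the l.s.c.\ of $f$ is only $\Pc_2$-continuous in the measure slot and the growth rate in $x$ and $u$ matches the available $L^p$ integrability exactly; one has to juggle bounded convergence for $\sqrt{M_2}$-terms, uniform $L^p$ bounds from \lemref{moment_p} and the relaxed-control integrability, and portmanteau-type Fatou for $\Pc_2$-convergent measures with l.s.c.\ integrands, rather than the straightforward pointwise Fatou one would use in a purely strict-control, non-mean-field setting.
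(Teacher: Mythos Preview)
Your route via Skorokhod representation and the split $f=\bar f-M(1+|x|^p+\sqrt{M_2(\mu)})$ is quite different from the paper's. The paper keeps the cost as a single functional $\Lambda(R,\omega):=\int_0^T\int_U f(t,X_t(\omega),R^X(t),u)\,q_t(\omega,\d u)\,\d t+\int_0^Tc(t)\,\d\zeta_t(\omega)$, cites Lemma~3.5 of Haussmann--Suo for the l.s.c.\ of $\omega\mapsto\Lambda(R,\omega)$, and then uses the triangle decomposition $\E^{R_\ell}[\Lambda(R_\ell,\cdot)]=\bigl(\E^{R_\ell}[\Lambda(R_\ell,\cdot)]-\E^{R_\ell}[\Lambda(R,\cdot)]\bigr)+\E^{R_\ell}[\Lambda(R,\cdot)]$: the first bracket is handled by the (implicitly assumed) \emph{uniform} l.s.c.\ of $f$ in $\mu$, and the second by portmanteau for a fixed l.s.c.\ functional. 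The paper thus freezes the law argument before passing to the limit, never needing to subtract and then add back a term of $|x|^p$ growth; your approach instead moves all three arguments $(\tilde X^n,R_n^X,\tilde q^n)$ simultaneously and pays for it with a compensating term that must be shown to converge exactly.

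That compensating term is where your argument has a genuine gap. To get $\liminf_nF_1(R_n)\ge F_1(R)$ from $\liminf_n\tilde\E[\int\bar f\,\tilde q^n]\ge\tilde\E[\int\bar f\,\tilde q]$, you need $\limsup_n\tilde\E[\int_0^T|\tilde X^n_t|^p\,\d t]\le\tilde\E[\int_0^T|\tilde X_t|^p\,\d t]$, i.e.\ actual convergence of that integral, not merely a bound. Your proposed justification does not deliver this: the chain $\sup_n\Gamma(R_n)<\infty\Rightarrow$ uniform $L^p$ control of $\int|u|^p q^n$ and $L^1$ control of $\zeta^n_T\Rightarrow$ (via \lemref{moment_p}) uniform $L^p$ bound on $\sup_t|\tilde X^n_t|$ yields only $L^1$-boundedness of $\int_0^T|\tilde X^n_t|^p\,\d t$, not uniform integrability. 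You would need an $L^{p+\varepsilon}$ estimate on $X$, which the standing assumptions do not supply; note also that in the lower bound of (A6) the $-M|x|^p$ term carries the wrong sign, so $\sup_n\Gamma(R_n)<\infty$ gives no direct control on it. Since $p>2$ while only $\Pc_2$-convergence on $\widehat\Omega$ is available, the required convergence cannot be recovered from the Wasserstein topology either. If you wish to keep your decomposition, you must either locate a higher-moment bound on $X$ under $\mathcal{R}_{\rm ad}(\nu)$ or replace the subtracted compensator by one with at most quadratic growth on $\widehat\Omega$ so that $\Pc_2$-convergence suffices; otherwise the paper's freeze-then-pass strategy is the cleaner way to close the argument.
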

		
\begin{proof}
Let us define the mapping $\Lambda(R,\omega):\Pc_2(\Omega;\widehat{\Omega})\times\Omega\mapsto\R$ by
\begin{align*}
\Lambda(R,\omega)=\int_0^T\int_Uf(t,X_t(\omega),R^X(t),u)q_t(\omega,\d u)\d t+{\int_{[0,T]} c(t)d\zeta_t(\omega)+g(X_T(\omega),R^X(T))}.  
\end{align*}
{For any $x_n\to x$ in $D^n[0,T]$ under $d_D$, we conclude by $\lambda(T)=0$ for any bijective increasing continuous function $\lambda:[0,T]\to[0,T]$ and the property of the Skorokhod metric $d_D$ that $x_n(T)\to x(T)$.}
Then, for $R\in\Pc_2(\Omega;\widehat{\Omega})$, we get $\Lambda(R,\cdot)$ is l.s.c. by Lemma 3.5 in \cite{Haussmann}. Furthermore, $\Lambda(R,\omega)$ is uniformly l.s.c. in $R\in\Pc_2(\Omega;\widehat{\Omega})$ w.r.t. $\omega\in\Omega$ as $f$ is uniformly l.s.c. in $\mu\in\Pc_2(\R^n)$ with respect to $(t,x,u)\in[0,T]\times\R^n\times\R^l$. Thus, if $R_{\ell}\to R$ in $\Pc_2(\Omega;\widehat{\Omega})$ as $\ell\to\infty$ with $R_{\ell},R\in\mathcal{R}_{\rm ad}(\nu)$, then $R_{\ell}\to R$ in $\Pc_2(\Omega;\widehat{\Omega})$ as $\ell\to\infty$, and hence $R_{\ell}\to R$ as $\ell\to\infty$ with $R_{\ell},R\in\mathcal{R}_{\rm ad}(\nu)$. It then follows that
\begin{align*}
&\liminf_{\ell\to\infty}\Gamma(R_{\ell})=\liminf_{\ell\to\infty}\E^{R_{\ell}}[\Lambda(R_{\ell},\omega)]\\
&\qquad\geq\liminf_{\ell\to\infty}\left\{\E^{R_{\ell}}[\Lambda(R_{\ell},\omega)]-\E^{R_{\ell}}[\Lambda(R,\omega)]\right\}+\liminf_{\ell\to\infty}\E^{R_{\ell}}[\Lambda(R,\omega)]\geq \E^{R}[\Lambda(R,\omega)].
\end{align*}
This yields the desired result that the objective functional $\Gamma$ is l.s.c. on $\mathcal{R}_{\rm ad}(\nu)$.
\end{proof}

The next result also holds, whose proof is reported in \hyperref[appn]{Appendix}.	
\begin{lemma}\label{closedset}
The set $\{R\in\Pc_2(\Omega;\widehat{\Omega});~(m\times R)(A_i^R)=T,~\forall i\in I\}$ is closed in $\Pc_2(\Omega;\widehat{\Omega})$.
\end{lemma}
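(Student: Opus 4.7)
The plan is to recast the constraint as the vanishing of a nonnegative functional and to prove that the functional is lower semi-continuous on $\Pc_2(\Omega;\widehat{\Omega})$. Writing $\Phi^i(t,\omega,R):=\int_U\phi^i(t,X_t(\omega),R^X(t),u)q_t(\omega,\d u)$ and
\[
H^i(R):=\int_0^T\int_\Omega[\Phi^i(t,\omega,R)]^-\,R(\d\omega)\d t,
\]
we have $m\otimes R(A_i^R)=T$ iff $H^i(R)=0$, and since $H^i\geq 0$, from $H^i(R_\ell)=0$ along any convergent sequence $R_\ell\to R$ the lower semi-continuity $\liminf_\ell H^i(R_\ell)\geq H^i(R)$ would force $H^i(R)=0$; taking the finite intersection over $i\in I$ then yields the closedness.

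To establish $\liminf_\ell H^i(R_\ell)\geq H^i(R)$ I pass to a Skorokhod representation on $\Pc_2(\Omega;\widehat{\Omega})$: on a common probability space $(\Omega'',\Pb'')$ there are $\omega_\ell=(\psi_\ell,q_\ell,\zeta_\ell)\sim R_\ell$ and $\omega=(\psi,q,\zeta)\sim R$ with $\omega_\ell\to\omega$ $\Pb''$-a.s.\ in $\Omega$, enhanced by second-moment convergence supplied by the $\Pc_2$-structure. For Lebesgue-a.e.\ $t\in[0,T]$ such $t$ is $\Pb''$-a.s.\ a continuity point of $\psi$, so that $\psi_\ell(t)\to\psi(t)$ $\Pb''$-a.s.; combined with the uniform integrability inherited from the $\Pc_2$-convergence, this yields $R_\ell^X(t)\to R^X(t)$ in the relevant topology on $\Pc_2(\R^n)$ for a.e.\ $t$. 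Simultaneously $q_\ell\to q$ in $\mathcal{Q}$ delivers weak convergence of the $t$-disintegrations $q_{\ell,t}\to q_t$ for a.e.\ $t$.

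Chaining the uniform u.s.c.\ of $\phi^i$ in $\mu$ from {\rm(A2)} (which absorbs the $R_\ell^X(t)\to R^X(t)$ at the cost of an arbitrarily small error $\epsilon$ uniform in $(x,u)$) with the joint u.s.c.\ in $(x,u)$ and Portmanteau applied to $q_{\ell,t}\to q_t$ (after truncating $|u|\geq N$ and controlling the tail via the linear growth of $\phi^i$ and the uniform moment bound on $q_{\ell,t}$), I obtain for a.e.\ $t$,
\[
\limsup_{\ell\to\infty}\Phi^i(t,\omega_\ell,R_\ell)\leq\Phi^i(t,\omega,R),\quad\Pb''\text{-a.s.}
\]
Since $y\mapsto y^-$ is continuous and non-increasing, this inverts to $\liminf_\ell[\Phi^i(t,\omega_\ell,R_\ell)]^-\geq[\Phi^i(t,\omega,R)]^-$ $\Pb''$-a.s., and Fatou's lemma on $\Pb''\otimes\d t$ delivers $\liminf_\ell H^i(R_\ell)\geq H^i(R)$. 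The main technical obstacle is reconciling three simultaneous limiting mechanisms---outer $\Pc_2$-convergence in $R$, $\mu$-marginal convergence only for a.e.\ $t$, and weak convergence of $q_{\ell,t}$---while handling the linear (hence unbounded) growth of $\phi^i$ within Portmanteau, which must be done by truncation estimates driven by the uniform moment bounds inherited from the $\Pc_2$-topology.
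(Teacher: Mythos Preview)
Your approach has a genuine gap at the step ``$q_\ell\to q$ in $\mathcal{Q}$ delivers weak convergence of the $t$-disintegrations $q_{\ell,t}\to q_t$ for a.e.\ $t$.'' This implication is false. Convergence in $\mathcal{Q}$ is $\mathcal{W}_2$-convergence of measures on the product $[0,T]\times U$ with fixed first marginal; it does \emph{not} force convergence of the fibres. A concrete counterexample: on $[0,2\pi]\times\R$ take $q_\ell(\d t,\d u)=\d t\otimes\delta_{\sin(\ell t)}(\d u)$; then $q_\ell\to \d t\otimes\nu$ in $\mathcal{W}_2$ where $\nu$ is the arcsine law on $[-1,1]$, yet $q_{\ell,t}=\delta_{\sin(\ell t)}$ fails to converge for every fixed $t$. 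Without $q_{\ell,t}\to q_t$ you cannot obtain the pointwise-in-$t$ inequality $\limsup_\ell\Phi^i(t,\omega_\ell,R_\ell)\le\Phi^i(t,\omega,R)$, and the Fatou step collapses. The negative part $[\cdot]^-$ is applied pointwise in $t$, so you cannot sidestep this by integrating $t$ out first.

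The paper's proof circumvents exactly this obstacle by never looking at $t$-slices of $q$. It reformulates the a.e.\ constraint via the set
\[
\tilde A_i^R=\Big\{\omega:\int_{t_1}^{t_2}\!\!\int_U\phi^i(t,X_t(\omega),R^X(t),u)\,q_t(\omega,\d u)\,\d t\ge 0\ \text{for all }0\le t_1<t_2\le T\Big\},
\]
shows $m\otimes R(A_i^R)=T\iff R(\tilde A_i^R)=1$, and then observes that for fixed $R$ the $t$-integrated functional $(\psi,q)\mapsto\int_{t_1}^{t_2}\!\int_U\phi^i(t,\psi_t,R^X(t),u)\,q_t(\d u)\,\d t$ is u.s.c.\ on $\Omega$ (this is the content of Lemma~3.5 in \cite{Haussmann}), so $\tilde A_i^R$ is closed; the uniform u.s.c.\ of $\phi^i$ in $\mu$ then handles the drift $R_\ell^X\to R^X$, and Portmanteau on closed sets finishes. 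The essential point is that only $t$-integrated functionals of $q$ behave well under the $\mathcal{Q}$-topology, which is why the reformulation through $\tilde A_i^R$ is not cosmetic but the crux of the argument.
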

		
We next show that the $\epsilon$-optimal set is compact.
\begin{lemma}\label{compactness}
For any $\epsilon>0$, let us define the $\epsilon$-optimal set $\mathcal{R}_{\rm opt}^{\epsilon}(\nu)=\{R\in\mathcal{R}_{\rm ad}(\nu);~\Gamma(R)\leq v^*+\epsilon\}$. Then, the set $\mathcal{R}_{\rm opt}^{\epsilon}(\nu)$ is compact in $\Pc_2(\Omega;\widehat{\Omega})$.
\end{lemma}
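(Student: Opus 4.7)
The plan is to show $\mathcal{R}_{\rm opt}^{\epsilon}(\nu)$ is both relatively compact and closed in $\Pc_2(\Omega;\widehat{\Omega})$. The closedness part essentially packages Lemmas~\ref{closedset} and~\ref{lsc} together with a classical martingale-problem limiting argument, so the substantive work lies in establishing relative compactness via uniform moment bounds and tightness of each marginal.

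\textbf{Step 1 (uniform moment bounds).} Using the lower bound on $f$ from (A6) and the coordinate-wise positivity $c^i(t) \geq c_0 > 0$ from (A5), the cost restriction $\Gamma(R) \leq v^* + \epsilon$ rearranges to
\[
M_1\E^R\!\left[\int_0^T\!\!\int_U |u|^p q_t(\d u)\d t\right] + c_0\E^R[|\zeta_T|_1] \leq v^* + \epsilon + MT\bigl(1 + \sup_t \E^R[|X_t|^p] + \sup_t\sqrt{M_2(R^X(t))}\bigr).
\]
Combined with the estimate $\E^R[\sup_t|X_t|^p] \leq C(1+\E^R[|\xi|^p])$ from Lemma~\ref{moment_p}, this yields constants depending only on $\nu,M,M_1,p,T,v^*,\epsilon$ that uniformly bound $\E^R[\sup_t|X_t|^p]$, $\E^R[\int_0^T\!\int_U|u|^p q_t(\d u)\d t]$, and $\E^R[|\zeta_T|]$ over $R \in \mathcal{R}_{\rm opt}^{\epsilon}(\nu)$.

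\textbf{Step 2 (tightness of marginals).} Since $p > 2$, Markov's inequality applied to the uniform control-moment bound gives tightness of the $\mathcal{Q}$-marginals in $\Pc([0,T]\times U)$ together with uniform $|\cdot|^2$-integrability, hence tightness in $\mathcal{Q}$ under its $\Pc_2$-Wasserstein metric. The uniform bound on $\E^R[|\zeta_T|]$ yields tightness of the $A^k[0,T]$-marginals in the weak-$*$/Fortet-Mourier topology via Banach-Alaoglu applied to the predual $(C[0,T])^k$. For the state marginal I will use the decomposition $X_t = Y_t + \int_0^t G(s)\d\zeta_s$ from Lemma~\ref{equivalenceY}: the continuous piece $Y$ is tight in $C([0,T];\R^n)$ by a Kolmogorov-Chentsov moment estimate applied to its martingale characterisation under (A3)-(A4) with the uniform control bounds from Step~1, while the BV-piece $\int_0^\cdot G(s)\d\zeta_s$ is tight in $D^n[0,T]$ as a continuous image of the already-tight $\zeta$-laws. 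Joint tightness together with the uniform $p$-integrability then delivers relative compactness in $\Pc_2(\Omega;\widehat{\Omega})$.

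\textbf{Step 3 (closedness and main difficulty).} For a convergent sequence $R_n \to R$ in $\Pc_2(\Omega;\widehat{\Omega})$ with $R_n \in \mathcal{R}_{\rm opt}^{\epsilon}(\nu)$, clauses~(i) and~(ii) of Definition~\ref{admissible} survive the limit by $\Pc_2$-continuity of the initial coordinate and by Fatou applied to the uniform $p$-integrability; clause~(iv) is precisely Lemma~\ref{closedset}; and the cost constraint $\Gamma(R) \leq v^* + \epsilon$ is preserved by Lemma~\ref{lsc}. The delicate point is the martingale identity~(iii): the drift $\nabla_x g(X_{s-})G(s)\d\zeta_s$ and the jump compensator $\sum_{s \leq t}(g(X_s)-g(X_{s-})-\nabla_x g(X_{s-})\Delta X_s)$ are not continuous functionals on $D^n[0,T]\times A^k[0,T]$, so a direct passage to the limit is obstructed by the mismatch between the Skorokhod $J_1$ topology on $D^n[0,T]$ and the weak-$*$ topology on $A^k[0,T]$. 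I will bypass this by working with the equivalent formulation in Lemma~\ref{equivalenceY}, where $\tilde M_t^R g$ is expressed purely in terms of the continuous process $Y$ and the relaxed control $q$; the limiting argument then reduces to weak convergence of continuous $Y$-martingales, and the singular control re-enters only through the identity $X = Y + \int_0^\cdot G\d\zeta$ inside the measure flow $R^X(t)$, which depends continuously on the joint law thanks to the decomposition and the moment bounds of Step~1. This reformulation is the hard part of the argument; everything else is accounting.
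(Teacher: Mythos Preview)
Your proposal is correct and follows essentially the same route as the paper: uniform moment bounds from the cost constraint (A5)--(A6) combined with Lemma~\ref{moment_p}, tightness of the $\mathcal{Q}$- and $A^k[0,T]$-marginals, use of the decomposition $X=Y+\int_0^\cdot G\,\d\zeta$ from Lemma~\ref{equivalenceY} to reduce both the tightness of the state marginal and the martingale-limit argument to the continuous process $Y$, and then Lemmas~\ref{closedset} and~\ref{lsc} for closedness. The only cosmetic difference is that the paper formalises the $Y$-decomposition by passing to the extended canonical space $\tilde\Omega=\Omega\times C([0,T];\R^n)$ and working with the pushforward measures $\tilde R_\ell$, whereas you treat $Y$ as a functional of $(X,\zeta)$ directly; the content is the same.
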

		
\begin{proof}
We first show that $\mathcal{R}_{\rm opt}^{\epsilon}(\nu)$ is relatively compact in $\Pc_2(\Omega;\widehat{\Omega})$. Let $(R_{\ell})_{\ell\geq1}\subset\mathcal{R}_{\rm opt}^{\epsilon}$ be a sequence of admissible control rules. As in Theorem 3.7 of \cite{Haussmann}, we introduce the auxiliary space $\tilde\Omega=\Omega\times C([0,T];\R^n)$,  $\tilde\F=\F\otimes\F^{\mathcal{C}}$ and $\tilde\F_t=\F_t\otimes\F^{\mathcal{C}}_t$ for $t\in[0,T]$. Here,  $\F^{\mathcal{C}}$ is the Borel $\sigma$-algebra on $C([0,T];\R^n)$ and $\F^{\mathcal{C}}_t=\sigma(y(s);s\leq t,y\in C([0,T];\R^n))$. Denote by $\tilde\omega=({\psi},q,\zeta,y)$ the generic element of $\tilde\Omega$. Define the extended probability measure $\tilde{R}_{\ell}$ on $(\tilde\Omega,\tilde\F)$ by
\begin{align}\label{extension}
\tilde R_{\ell}(F\times C)=\int_F\mathbf{1}_C(y(\omega))R_{\ell}(\d\omega),~~\forall F\in\mathcal{F},~C\in\F^{\mathcal{C}}
\end{align}
with $y(\omega)$ being defined by  $y_{\cdot}(\omega):=\psi_{\cdot}(\omega)-\int_0^{\cdot}G(s)\d\zeta_s(\omega)$.
			
By Lemma~\ref{equivalenceY}-(i), we have $y_{\cdot}$ has continuous trajectories under probability $R_{\ell}$, and hence $\tilde{R}_{\ell}$ is well-defined.	 On the other hand, for $R\in\mathcal{R}_{\rm opt}^{\epsilon}(\nu)$, Assumption~\ref{ass1}-(A5) and (A6) yield that
\begin{align*}
v^*+\epsilon&\geq \E^R\left[\int_0^T\int_Uf(t,X_t,R^X(t),u)q_t(\d u)\d t+{\int_{[0,T]} c(t)d\zeta_t+g(X_T,R^X(T))}\right]\\
&\geq \E^R\Big[-\int_0^TM(1+|X_t|^p+M_2(R^X(t)))\d t+M_1\int_0^T\int_U|u|^pq_t(\d u)\d t+c_0|\zeta_T|\\
&\qquad\quad-M(1+|X_T|^p+M_2(R^X(T)))\Big].
\end{align*}
Consequently, we have
\begin{align}\label{uniformbounded}
\sup_{R\in\mathcal{R}_{\rm opt}^{\epsilon}(\nu)}\E^R\left[\int_0^T\int_U|u|^pq_t(\d u)\d t+\left|\zeta_T\right| \right]<+\infty.
\end{align}
Then, it follows from Corollary 2.9 in \cite{Haussmann} that $(\tilde R_{\ell}|_{\mathcal{A}^k[0,T]})_{\ell\geq1}$ is tight. Moreover, by Proposition B.3 in {\cite{Lacker}} and Kolmogorov's tightness criterion, we can deduce that $(\tilde R_{\ell}|_{\mathcal{Q}\times C([0,T];\R^n)})_{\ell\geq1}$ is relatively compact in $\Pc_2(\mathcal{Q}\times C([0,T];\R^n))$. 
			
We next prove that $(\tilde R_{\ell})_{\ell\geq1}$ is tight. Since both $(\tilde R_{\ell}|_{\mathcal{Q}\times C([0,T];\R^n)})_{\ell\geq1}$ and $(\tilde R_{\ell}|_{\mathcal{A}^k[0,T]})_{\ell\geq1}$ are tight, for any $\epsilon>0$,  there exist some compact subsets $K\subset\mathcal{Q}$, $V\subset\mathcal{A}^k[0,T]$ and $C\subset C([0,T];\R^n)$ such that $\sup_{\ell\geq1}\tilde R_{\ell}(D^n[0,T]\times K\times V\times C)\geq 1-\epsilon$. Let $G(V):=\{\int_0^{\cdot}G(s)\d\zeta;~\zeta\in V\}$ and $D=C+G(V)=\left\{y+g;~y\in C,g\in G(V)\right\}$. Note that $C$ and $G(V)$ are compact in $C([0,T];\R^n)$ and $\mathcal{A}^n[0,T]$, respectively. Then, they are both compact in $D^n[0,T]$ and so is $D$. Moreover, $(\tilde R_{\ell})_{\ell\geq1}$ is supported on $\{x_{\cdot}=y_{\cdot}+\int_0^{\cdot}G(s)\d\zeta\}$, and thus $\sup_{\ell\geq1}\tilde R_{\ell}(D\times K\times V\times C)=\sup_{\ell\geq1}\tilde R_{\ell}(D^n[0,T]\times K\times V\times C)\geq 1-\epsilon$. Here, $D\times K\times V\times C$ is compact in $\Omega$, and hence $(R_{\ell})_{\ell\geq1}$ is tight. Moreover, it follows from \eqref{uniformbounded} that 
\begin{align*}
\sup_{R\in\mathcal{R}_{\rm opt}^{\epsilon}(\nu)}\E^R\left[\sup_{t\in[0,T]}\left|X_t\right|^p+\int_0^T\int_U|u|^pq_t(\d u)\d t+\sup_{t\in[0,T]}\left|Y_t\right|^p\right]<+\infty. 
\end{align*}
Thanks to Corollary B.2 in \cite{Lacker}, we have that $(\tilde{R}_{\ell}|_{\widehat{\Omega}\times C([0,T];\R^n)})_{\ell\geq1}$ is relatively compact in $\Pc_2(\widehat{\Omega}\times C([0,T];\R^n))$. As $(R_{\ell})_{\ell\geq1}$ is the $\Omega$-marginal { of $(\tilde R_{\ell})_{\ell\geq1}$, $\ie$, $R_{\ell}=\tilde R_{\ell}|_{\Omega}$} in the terminology of \cite{Jacod}, $(R_{\ell})_{\ell\geq1}$ is thus relatively compact in $\Pc_2(\Omega;\widehat{\Omega})$, and hence so is $\mathcal{R}_{\rm opt}^{\epsilon}(\nu)$. 
			
Lastly, we show that $\mathcal{R}_{\rm opt}^{\epsilon}(\nu)$ is closed. Let $(R_{\ell})_{\ell\geq1}\subset\mathcal{R}_{\rm opt}^{\epsilon}(\nu)$ satisfy $R_{\ell}\to R$ in $\Pc_2(\Omega;\widehat{\Omega})$ as $\ell\to\infty$. Then, we prove that the limit point $R\in\mathcal{R}_{\rm ad}(\nu)$. In light of Lemma~\ref{equivalenceY} and Lemma~\ref{closedset}, it suffices to verify claims (i) and (iv) of Lemma~\ref{equivalenceY} for the limit $R$. To do it, let $\tilde R_{\ell}$ and $\tilde R$ be the extension measure on $(\tilde\Omega,\tilde\F)$ as in \eqref{extension}. Then, by construction, we have $\tilde{R}_{\ell}\xrightarrow{w}\tilde{R}$ as $\ell\to\infty$.  Since the set $\{\tilde\omega\in\tilde{\Omega};~y_{\cdot}=x_{\cdot}+\int_0^{\cdot}G(s)\d\zeta_s\}$ is closed in $\tilde\Omega$, it holds that 
\begin{align*}
\tilde R\left(\left\{\tilde\omega\in\tilde{\Omega};~y_{\cdot}=x_{\cdot}+\int_0^{\cdot}G(s)\d\zeta_s\right\}\right)\geq \limsup_{\ell\to\infty}\tilde R_{\ell}\left(\left\{\tilde\omega\in\tilde{\Omega};~y_{\cdot}=x_{\cdot}+\int_0^{\cdot}G(s)\d\zeta_s\right\}\right)=1,   
\end{align*}
which proves claim (i) of Lemma~\ref{equivalenceY}. For the martingale argument, because $(b,\sigma)$ are uniformly continuous in $\mu\in\Pc_p(\R^n)$ (actually there exists a uniform Lipschitz constant) w.r.t. $(t,x,u)\in[0,T]\times\R^n\times\R^l$, for any $\psi\in C_b^2(\R^n)$, it holds that 
\begin{equation}\label{uniformcon}
\tilde M_t^{R_{\ell}}\psi(\omega)\to\tilde M_t^R\psi(\omega),\quad \ell\to\infty
\end{equation}
uniformly w.r.t. $(t,\omega)\in[0,T]\times\Omega$. Moreover,  for any bounded $\F_s$-measurable random variable $\eta$ with $0\leq s<t\leq T$, we have
\begin{align*}
&\left|\E^R\left[(\tilde M_t^R\psi(\omega)-\tilde M_s^R\psi(\omega))\eta(\omega)\right]\right|\leq \E^{R_{\ell}}\left[\left|\left(\tilde M_t^R\psi(\omega)-\tilde M_t^{R_n}\psi(\omega)\right)\eta(\omega)\right|\right]\nonumber\\
&\quad+\E^{R_{\ell}}\left[\left|\left(\tilde M_s^R\psi(\omega)-\tilde M_s^{R_{\ell}}\psi(\omega)\right)\eta(\omega)\right|\right]+ \left|\E^{R} \left[\tilde M_t^R\psi(\omega)\eta(\omega)\right]-\E^{R_{\ell}}\left[\tilde M_t^{R}\psi(\omega)\eta(\omega)\right]\right|\\
&\quad+\left|\E^{R}\left[\tilde M_s^R\psi(\omega)\eta(\omega)\right]-\E^{R_{\ell}}\left[\tilde M_s^{R}\psi(\omega)\eta(\omega)\right]\right|+\left|\E^{R_{\ell}}\left[\left(\tilde M_t^{R_{\ell}}\psi(\omega)-\tilde M_s^{R_{\ell}}\psi(\omega)\right)\eta(\omega)\right]\right|\\
&\quad=: I_1+I_2+I_3+I_4+I_5,
\end{align*}
where the term $I_5$ equals to $0$ as $R_{\ell}\in\mathcal{R}_{\rm ad}(\nu)$; while terms $I_1$ and $I_2$ converge to $0$ as $\ell\to\infty$ due to the uniform convergence \eqref{uniformcon}. Lastly, terms $I_3$ and $I_4$ converge to $0$ because $\tilde M_t^R\psi(\omega)\eta(\omega)$ is continuous with at most quadratic growth in $(x,q)$ and $R_{\ell}\to R$ in $\Pc_2(\Omega;\widehat{\Omega})$ as $\ell\to\infty$. Hence, we can conclude that $\E^R[(\tilde M_t^R\psi(\omega)-\tilde M_s^R\psi(\omega))h(\omega)]=0$. This implies claim (iv) of Lemma~\ref{equivalenceY}. 
\end{proof}
		
We are ready to present the main result of this section:
\begin{theorem}\label{existence_opt}
Let Assumption~\ref{ass1} hold. Then, the set of optimal relaxed control rules $\mathcal{R}_{\rm opt}(\nu)$ defined by \eqref{eq:Roptnu} is a (nonempty) compact and convex subset of $\Pc_2(\Omega;\widehat{\Omega})$. Consequently, there exists an optimal relaxed control to problem \eqref{eq:objectGammaR}.
\end{theorem}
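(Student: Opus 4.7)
The plan is to combine \lemref{compactness} (compactness of each $\epsilon$-optimal set) with \lemref{lsc} (lower semicontinuity of $\Gamma$) to extract an optimizer, and then to describe the structure of $\mathcal{R}_{\rm opt}(\nu)$. First I would establish non-emptiness. Under (A5)-(A6) and the moment estimate in \lemref{moment_p}, together with the standing assumption $\mathcal{R}_{\rm ad}(\nu)\neq\varnothing$, $v^*$ is finite. Take a minimizing sequence $(R_\ell)_\ell\subset \mathcal{R}_{\rm ad}(\nu)$ with $\Gamma(R_\ell)\downarrow v^*$; fix any $\epsilon>0$, so that eventually $R_\ell \in \mathcal{R}_{\rm opt}^\epsilon(\nu)$, which is compact in $\Pc_2(\Omega;\widehat\Omega)$ by \lemref{compactness}. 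Extract a convergent subsequence $R_{\ell_k}\to R^*\in \mathcal{R}_{\rm opt}^\epsilon(\nu)\subset\mathcal{R}_{\rm ad}(\nu)$. Lower semicontinuity (\lemref{lsc}) gives $\Gamma(R^*)\leq \liminf_k\Gamma(R_{\ell_k})=v^*$, so $R^*\in \mathcal{R}_{\rm opt}(\nu)$, which in particular yields the existence of an optimal relaxed control to \eqref{eq:objectGammaR}.

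For compactness, I would observe that $\mathcal{R}_{\rm opt}(\nu)=\{R\in\mathcal{R}_{\rm opt}^1(\nu);\ \Gamma(R)\leq v^*\}$ is a sublevel set of the l.s.c.\ function $\Gamma$ sitting inside the compact set $\mathcal{R}_{\rm opt}^1(\nu)$ (\lemref{compactness}); it is therefore closed in $\mathcal{R}_{\rm opt}^1(\nu)$ and hence compact in $\Pc_2(\Omega;\widehat\Omega)$.

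The hardest part will be convexity. Given $R_1,R_2\in\mathcal{R}_{\rm opt}(\nu)$ and $\lambda\in[0,1]$, I would set $\bar R=\lambda R_1+(1-\lambda)R_2$ and verify the four items of \defref{admissible} plus $\Gamma(\bar R)=v^*$. Items (i)-(ii) are immediate from the linearity of marginals and expectations, and the moment bound in \lemref{moment_p}(ii) is preserved by the convex combination. For items (iii)-(iv) and the optimality identity, the main subtlety is that the generator $\mathbb{G}g$, the running cost $f$, and the constraint functions $\phi^i$ all enter through the marginal $R^X(s)=R\circ X_s^{-1}$, which satisfies $\bar R^X(s)=\lambda R_1^X(s)+(1-\lambda)R_2^X(s)$ but is fed into coefficients that are nonlinear in $\mu$. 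My strategy would be to write the martingale identity $\E^{\bar R}[(M^{\bar R}_tg-M^{\bar R}_sg)\eta]=0$, apply the decomposition $\E^{\bar R}=\lambda\E^{R_1}+(1-\lambda)\E^{R_2}$, and match the two resulting integrals against the individual martingale relations already known under $R_1$ and $R_2$; item (iv) is handled analogously via \lemref{closedset} once (iii) is settled, and $\Gamma(\bar R)=v^*$ follows by the same splitting together with the trivial linearity of $R\mapsto\E^R[\int_0^T c(t)\,\d\zeta_t]$. The main technical obstacle is controlling the $\mu$-defect between $\bar R^X$ and each $R_j^X$ in these coefficients, and this is where the u.s.c./l.s.c.\ regularity in \assref{ass1}-(A1)(A2) together with the uniform continuity statements already used in the proof of \lemref{compactness} would have to do the work to close the three identities simultaneously.
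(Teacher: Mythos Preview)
Your treatment of nonemptiness and compactness matches the paper's short proof: the paper also restricts the infimum to $\mathcal{R}_{\rm opt}^{\epsilon}(\nu)$, combines \lemref{lsc} with \lemref{compactness} to extract a minimizer, and then observes that $\mathcal{R}_{\rm opt}(\nu)$ is closed inside the compact set $\mathcal{R}_{\rm opt}^{\epsilon}(\nu)$. For convexity the paper only writes ``It also can be easily checked to be convex and closed,'' so there is no detailed argument to compare against.

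Your proposed route to convexity, however, has a genuine gap. After splitting $\E^{\bar R}=\lambda\E^{R_1}+(1-\lambda)\E^{R_2}$ you must evaluate, for each $j$, the quantity $\E^{R_j}\big[(M_t^{\bar R}g-M_s^{\bar R}g)\eta\big]$; the martingale relation you actually possess is for $M^{R_j}g$ (built with $R_j^X$ in the generator), so the residual is an integral of $\mathbb{G}g(\cdot,R_j^X(\cdot),\cdot)-\mathbb{G}g(\cdot,\bar R^X(\cdot),\cdot)$ under $R_j$, and nothing forces this to vanish. The u.s.c./l.s.c.\ hypotheses in \assref{ass1} yield one-sided inequalities, and the uniform-continuity device from \lemref{compactness} gives smallness only along sequences $R_\ell\to R$, not for two fixed rules $R_1,R_2$ at positive distance; neither tool can manufacture the exact cancellation you need. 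The same defect obstructs item~(iv) of \defref{admissible} and the equality $\Gamma(\bar R)=v^*$, since $\phi^i$ and $f$ are likewise evaluated at $\bar R^X$. More fundamentally, for a McKean--Vlasov martingale problem the admissible set is in general \emph{not} convex: under $\bar R$, on the $R_j$-branch the coordinate process solves the SDE with $R_j^X$ in the coefficients, not with the mixture law $\bar R^X$, so $M^{\bar R}g$ need not be an $\bar R$-martingale unless $R_1^X(t)=R_2^X(t)$ for all $t$. Thus the regularity route you outline cannot close the identities, and the convexity assertion (which the paper leaves unargued) would require additional structure not supplied by \assref{ass1}.
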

		
\begin{proof}
Note that $v^*=\inf_{R\in\mathcal{R}_{\rm ad}(\nu)}\Gamma(R)=\inf_{R\in\mathcal{R}_{\rm opt}^{\epsilon}(\nu)}\Gamma(R)$. Then, it follows from Lemma~\ref{lsc} and Lemma~\ref{compactness} that $\mathcal{R}_{\rm opt}$ is nonempty. It also can be easily checked to be convex and closed. As a closed subset of the compact set $\mathcal{R}_{\rm opt}^{\epsilon}(\nu)$, it is hence compact.
\end{proof}
		
We also have the following corollary as a direct consequence of Lemma~\ref{strictcontrol} and Theorem~\ref{existence_opt}: 
\begin{corollary}\label{optimality_exist}
Under Assumption~\ref{ass1} and Assumption~\ref{ass2}, there exists an optimal strict control $(\alpha,\zeta)$ to the constrained MFC problem \eqref{cost_func0}. 
\end{corollary}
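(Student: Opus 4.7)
The plan is to chain together the three ingredients already established: \thmref{existence_opt}, \lemref{strictcontrol}, and the inequality in \remref{rem:valuefcn2s}. The idea is that \thmref{existence_opt} produces an optimal relaxed control rule attaining the relaxed value $v^*$, \lemref{strictcontrol} then converts this into a strict admissible control whose cost does not exceed $v^*$, and \remref{rem:valuefcn2s} pins down the matching lower bound for the strict problem, so equality must hold and the constructed strict control must be optimal.

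More concretely, I would first invoke \thmref{existence_opt} under \assref{ass1} to obtain some $R^{*}\in\mathcal{R}_{\rm opt}(\nu)$, so that $\Gamma(R^{*})=v^{*}$ with $v^{*}=\inf_{R\in\mathcal{R}_{\rm ad}(\nu)}\Gamma(R)$. Next, invoking \lemref{strictcontrol} under \assref{ass2} applied to $R^{*}$, I would extract a strict admissible control $(\alpha^{*},\zeta^{*})\in\U_{\rm ad}[0,T]$ satisfying $J(\alpha^{*},\zeta^{*})\leq\Gamma(R^{*})=v^{*}$. In particular this confirms $\U_{\rm ad}[0,T]\neq\varnothing$, so the strict infimum in \eqref{cost_func0} is well defined.

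Finally, by \remref{rem:valuefcn2s}, every strict admissible pair $(\alpha,\zeta)$ induces an admissible relaxed control rule with matching cost, yielding the reverse bound $v^{*}\leq\inf_{(\alpha,\zeta)\in\U_{\rm ad}[0,T]}J(\alpha,\zeta)$. Combining these two inequalities, we arrive at
\begin{align*}
\inf_{(\alpha,\zeta)\in\U_{\rm ad}[0,T]}J(\alpha,\zeta)\leq J(\alpha^{*},\zeta^{*})\leq v^{*}\leq\inf_{(\alpha,\zeta)\in\U_{\rm ad}[0,T]}J(\alpha,\zeta),
\end{align*}
so all quantities coincide and $(\alpha^{*},\zeta^{*})$ is an optimal strict control for \eqref{cost_func0}. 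There is no substantive obstacle here because all the work has already been done: the compactness and lower semicontinuity used in \thmref{existence_opt} handle the existence in the relaxed setting, while the convexity in \assref{ass2} built into the set $K(t,x,\nu)$ of \eqref{eq:Ktxnu} is precisely what allows the selection theorem behind \lemref{strictcontrol} to produce a strict $(\alpha^{*},\zeta^{*})$ without worsening the objective or violating the pointwise constraints \eqref{path_constraint}. The only mild point to be careful about is making sure the admissibility and the inequality $J\leq\Gamma$ quoted from \lemref{strictcontrol} hold for this specific $R^{*}$, but this is immediate since $R^{*}\in\mathcal{R}_{\rm opt}(\nu)\subset\mathcal{R}_{\rm ad}(\nu)$.
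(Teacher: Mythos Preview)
Your proposal is correct and follows essentially the same approach as the paper, which simply states the corollary as a direct consequence of \lemref{strictcontrol} and \thmref{existence_opt}. You have spelled out the chain of inequalities explicitly, including the use of \remref{rem:valuefcn2s} for the reverse bound, which the paper leaves implicit.
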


\section{SMP by the Lagrange Multipliers Method}\label{sec:SMP}
		
The aim of this section is to characterize the optimal control by establishing the stochastic maximum principle (SMP) of the constrained MFC problem \eqref{eq:controlledSDE}-\eqref{cost_func0} with singular control. 
		
Let us first introduce some basic spaces of functions and processes.
Let $\X$ be the space $L^2([0,T];\R^l)/\mathord{\sim}$ equipped with the inner product $\langle\gamma^1,\gamma^2\rangle_{\X}=\int_0^T\langle\gamma_t^1,\gamma_t^2\rangle\d t$. Then, $(\X,\langle\cdot,\cdot\rangle_{\X})$ is a Hilbert space.  Let $\Lb_{\Fb}^{2,{l}}$ be the set of $\Fb$-adapted processes $\beta=(\beta_t)_{t\in[0,T]}$ taking values in $\R^l$ such that $\E[\int_0^T|\beta_t|^2\d t]<\infty$. As a result, $\Lb_{\Fb}^{2,{l}}$ is a Hilbert space equipped with the inner product $\langle \beta^1,\beta^2\rangle_{\Lb_{\Fb}^2}=\E[\int_0^T\langle\beta^1_t,\beta^2_t\rangle\d t]$ for $  \beta^1,\beta^2\in\mathbb{L}_{\Fb}^{2,{l}}$. It can be easily verified that $\U[0,T]$ is closed and convex in $(\Lb_{\Fb}^{2,{l}},\langle\cdot,\cdot\rangle_{\Lb_{\Fb}^{2,{l}}})$. Define $\mathbb{M}^2$ ($\subset\Lb_{\Fb}^{2,{n}}$) as the set of $\Fb$-adapted (continuous) $\R^n$-valued process $\eta=(\eta_t)_{t\in[0,T]}$ satisfying the It\^o's representation that 
		\begin{align}\label{eq:ito-rep}
			\eta_t=\eta_0+\int_0^t\eta^1_s\d s+\int_0^t\eta_s^2\d W_s, 
		\end{align}
		where $\eta_0\in L^2(\Omega;\R^n)$ is $\F_0$-measurable,  $(\eta^1,\eta^2)=(\eta_t^1,\eta_t^2)_{t\in [0,T]}\in \Lb_{\Fb}^{2,n}\times\Lb_{\Fb}^{n\times r}$ take values respectively in $\R^n$ and $\R^{n\times r}$. Thus, $\mathbb{M}^2$ is a Hilbert space under the inner product $\langle\cdot,\cdot\rangle_{\mathbb{M}^2}$ defined by
		\begin{align}\label{eq:innerM2}
			\langle \eta,\delta\rangle_{\mathbb{M}^2}:=\E\left[\langle\eta_0,\delta_0\rangle+\int_0^T\left(\langle\eta_t^1,\delta_t^1\rangle+\tr\left(\eta_t^2(\delta_t^2)^{\T}\right)\right)\d t \right],\quad\forall\eta,\delta\in\mathbb{M}^2.    
		\end{align}	
We use $\Lb_{\Fb,\R}^2$ to represent the set of $\Fb$-adapted processes in $L^2([0,T]\times\Omega;\R)/\mathord{\sim}$. Then, $\Lb^2_{\Fb,\R}$ is a Hilbert space with the inner product $\langle\beta^1,\beta^2\rangle_{\Lb^2_{\Fb,\R}}:=\E[\int_0^T\beta_t^1\beta_t^2\d t]$ for $\beta^1,\beta^2\in\Lb^2_{\Fb,\R}$. 
		
To adopt the methodology introduced in \cite{BWY}, we propose to embed the state process into a suitable Banach space. Without loss of generality, we can assume that for any singular control $\zeta\in\mathcal{A}^k[0,T]$, the associated {Radon} measures $\mu_i$ for $i\in I$ of the trajectory $t\mapsto\zeta_t(\omega)$ is singular w.r.t. {the} Lebesgue measure. Otherwise, we can decompose $\d\zeta_t$ using the Lebesgue decomposition w.r.t. the Lebesgue measure and absorb the absolutely continuous (regular) term to the drift $b$. {Indeed, suppose that $d\zeta_t$ admits the decomposition $d\zeta_t = \zeta_t^{\rm a} dt + d\zeta_t^{\rm s}$, where $d\zeta_t^{\rm s}$ is a non-decreasing process that is singular with respect to the Lebesgue measure on $[0,T]$. Then, by setting $\tilde\alpha = (\alpha, \zeta^{\rm a})$ as the new regular control and $\tilde\zeta = \zeta^{\rm s}$ as the new singular control, and by defining the new drift coefficient as $\tilde b(t,x,\mu,u,u') = b(t,x,\mu,u) + G(t)u'$, we can consider the problem where the singular control is singular with respect to the Lebesgue measure.}

Consider the following space:
\begin{align}\label{eq:mathscrK}
\mathscr{X}&:=\Big\{X=(X_t)_{t\in[0,T]};~X \text{ is }\Fb\text{-adapted process with $X_0\in L^2(\Omega;\R^n)$ and }\nonumber\\
&\qquad~ X_{\cdot}=X_0+\int_0^{\cdot} X_s^1\d s+\int_0^{\cdot} X_s^2\d W_s+\int_0^{\cdot}G(s)\d X_s^3\nonumber\\
&\qquad~ \text{with}~(X^1,X^2,X^3)\in \Lb_{\Fb}^{2,{n}}\times\Lb_{\Fb}^{2,{n\times r}}\times\mathcal{A}^k[0,T] \Big\}.
\end{align}
Consequently, $\mathscr{X}$ is a Banach space under the norm $\|\cdot\|_{\mathscr{X}}$ given by
\begin{align*}
\|X\|_{\mathscr{X}}:=\sqrt{\|X^1\|_{\Lb_{\Fb}^{2,{n}}}^2+\|X^2\|_{\Lb_{\Fb}^{2,{n\times r}}}^2}+\sqrt{\E\left[\left|\int_0^TG(s)\d X_s^3\right|^2\right]},~~\forall X\in\mathscr{X}.  
\end{align*}
		
By the assumption that, for any $X\in\mathscr{X}$, its 3rd component $X^3$ of the decomposition in $\mathscr{X}$ is singular w.r.t. to {the} Lebesgue measure. We further define the set $\mathbb{G}$ and the norm $\|\cdot\|_{L^2(\Omega;\Mc([0,T])^n)}$ by
\begin{align*}
&\Gb:=\Big\{S=(S_t)_{t\in[0,T]};~ S_{\cdot}=\int_0^{\cdot}G(s)\d\eta_s~\text{for}~\eta=(\eta_t)_{t\in[0,T]}\in\mathcal{A}^k[0,T]\\
&\qquad\qquad\qquad\text{with $\d\eta_t^i$ being singular {\rm w.r.t.} the Lebesgue measure on $[0,T]$}\Big\},\\
\|S\|_{L^2(\Omega;\Mc([0,T])^n)}&:=\left\{\E\left[\left(\sum_{i=1}^n\|S^i\|_{[0,T];{\rm FM}}\right)^2\right]\right\}^{\frac12}\nonumber\\
&=\left\{\E\left[\left|\sup_{\substack{\ell:[0,T]\to\R^n\\\|\ell^i\|_{\infty}+\Lip(\ell^i)\leq 1 }}\int_{[0,T]}\ell(t)^{\T}\d S_t\right|^2\right]\right\}^{\frac12}.\\
\end{align*}
Here, the last equality follows by recalling the definition of the Foutet-Mourier norm. { In view of (B6) of Assumption~\ref{ass3}, we derive that $G(t)^{\top}G(t)$ is invertible for every $t\in [0,T]$. Therefore, for any mapping $h:[0,T]\to\R^k$ such that every component $h^i:[0,T]\times\R$ satisfies $\|h^i\|_{\infty}+\Lip(h^i)\leq 1$, there exists a mapping $\ell:[0,T]\to\R^n$ such that $G(t)^{\T}\ell(t)=h(t)$ holds for all $t\in [0,T]$. Indeed, we can choose $\ell(t)=G(t)(G(t)^{\T}G(t))^{-1}h(t)$. By using the fact that $A^{-1}-B^{-1}=A^{-1}(B-A)B^{-1}$ and $|y^{\top}(G(t)^{\T}G(t))^{-1}y|\leq c^{-1}|y|^2$ for $y\in\R^k$, we deduce that $|\ell (t)-\ell(t)|\leq C|t-s|$ for some constant $C>0$ only depending on $c$. Consequently, it holds that
\begin{align*}
\|\eta\|_{L^2(\Omega;\Mc([0,T])^k)}^2&=\E\left[\left|\sup_{\substack{h:[0,T]\to\R^k\\\|h^i\|_{\infty}+\Lip(h^i)\leq 1 }}\int_{[0,T]}h(t)^{\T}\d \eta_t\right|^2\right]\\
    &=\E\left[\left|\sup_{\substack{\ell:[0,T]\to\R^n\\\|\ell^i\|_{\infty}+\Lip(\ell^i)\leq 1 }}\int_{[0,T]}\ell(t)^{\T}G(t)\d \eta_t\right|^2\right]\leq C\|S\|_{L^2(\Omega;\Mc([0,T])^n)},
\end{align*}
where, $S(t)=\int_0^tG(t)\d\eta_t$ for $t\in [0,T]$ and the constant $C>0$ only depends on $(c,\|G\|_{\infty})$. This implies that $\Gb$ is a closed convex cone of the Banach space $L^2(\Omega;\mathcal{M}([0,T])^n)$.% (the convex cone property can be easily verified). 
} Thus, $\mathscr{X}$ can be viewed as a closed convex cone of the Banach space $\mathbb{T}^2:=\M^2\times L^2(\Omega;({\cal M}[0,T])^n)$ under the product norm. For any $\lambda=(\lambda^1,\lambda^2)\in\mathbb{T}^{2,*}\simeq \M^2\times L^2(\Omega;(C([0,T];\R)^{**})^n)$, the dual pair of $X=L+S\in\mathscr{X}$ for $(L,S)\in\M^2\times\Gb^2$ is defined by
		\begin{align*}
			\langle X,\lambda\rangle_{\mathbb{T}^2,\mathbb{T}^{2,*}}=\langle L,\lambda^1\rangle_{\M^2}+\langle S,\lambda^2\rangle_{L^2(\Omega;({\cal M}[0,T])^k),L^2(\Omega;(C([0,T];\R)^{**})^k)}.   
		\end{align*}
Following Lemma 3.6 in \cite{BWY}, let us introduce	
\begin{align}\label{eq:XYHexam}
\tX:=\mathbb{T}^2\times\Lb_{\Fb}^2\times L^2(\Omega;({\cal M}[0,T])^n),\quad \tY_{i}:=\Lb^2_{\Fb,\R},~ \forall i\in I,\quad\tH:=\mathbb{T}^2.
\end{align}
The norm for $\tX$ is defined to be the (default) product norm and we take $\|\cdot\|_{\tY_i}$~$(i\in I)$ as  the norm $\|\cdot\|_{\Lb^2_{\Fb,\R}}$. Moreover, define $J:\tX\to\R$, $\Phi_i:\tX\to\tY_{i}$~$(i\in I)$ and $F:\tX\to\tH$ by, for $(X,\alpha,\zeta)\in\tX$,
\begin{align}\label{func_def}
J(X,\alpha,\zeta) &:=	\E\left[\int_0^Tf(t,X_t,\Law(X_t),\alpha_t)\d t+{\int_{[0,T]} c(t)d\zeta_t+g(X_T,\mu_T)} \right],\nonumber\\
\Phi_i(X,\alpha,\zeta)(t)&:=-\phi^i(t,X_t,\Law(X_t),\alpha_{t}),~~~\forall t\in[0,T],~i\in I,\\
F(X,\alpha,\zeta)(\cdot)&:=X_{\cdot}-\xi-\int_0^{\cdot}b(s,X_s,\Law(X_s),\alpha_s)\d s-\int_0^{\cdot}\sigma(s,X_s,\Law(X_s),\alpha_s)\d W_s-\int_0^{\cdot}G(s)\d\zeta_s.\nonumber
\end{align}
The well-posedness of $\Phi_i$ is guaranteed by Assumption~\ref{ass1}-{\rm (A2)}. Thus, we can formulate the following optimization problem (equivalent to the constrained MFC problem \eqref{cost_func0}) with constraints that
\begin{align}\label{optimization}
\begin{cases}
\displaystyle       \inf~J(X,\alpha,\zeta)\\[0.4em]
\displaystyle       \st~~(X,\alpha,\zeta)\in C:=\mathscr{X}\times\U[0,T]\times\mathcal{A}^k[0,T],\\[0.4em]
\displaystyle      ~~~~~~~  \Phi_i(X,\alpha,\zeta)\leq_{K}0,~~\forall i\in I,\\[0.4em]
\displaystyle  ~~~~~~~F(X,\alpha,\zeta)=0,
\end{cases}
\end{align}
		where $C$ in \eqref{optimization} is closed and convex in $\tX$, and $K$ is a closed and convex cone given by 
		\begin{align}\label{eq:K12}
			\displaystyle  K:=\{f\in \Lb_{\Fb,\R}^2;~f(t,\omega)\geq 0,~\forall (t,\omega)\in[0,T]\times\Omega\}=:\Lb_{\Fb,\R+}^2.
		\end{align}

   {By Theorem~\ref{existence_opt} and Corollary~\ref{optimality_exist}, the existence of an optimal control is guaranteed. Moreover, as discussed in \cite{BWY}, the Fritz–John (FJ) condition provides a necessary condition for the optimal regular controls. In the sequel, we shall extend this methodology by introducing the FJ condition for C-MFC problem with singular controls.} By using Lemma 3.11 in \cite{BWY}, the FJ optimality condition holds for the constrained optimization problem \eqref{optimization} in the following sense.
\begin{lemma}[FJ optimality condition for problem \eqref{optimization}]\label{lem:FJCondAbstractprob}
Sppose that Assumption~ref{ass3} holds.
Let $(\widehat{X},\widehat{\alpha},\widehat{\zeta})\in C$ be an optimal pair to problem \eqref{optimization}. Then,  there exist multipliers $r_0\geq0$, $\eta^i\in\Lb^{2}_{\Fb,\R+}$ for $i\in I$ and $\lambda=(\lambda_1,\lambda_2)\in\M^2\times L^2(\Omega;(C([0,T],\R)^{**})^n)$, which are not all zeros, such that
\begin{align*}
&\E\left[\int_0^T\Phi_i(\widehat{X},\widehat{\alpha},\widehat{\zeta})(t)\eta^i_t\d t\right]=0,~~\forall i\in I,\\
& r_0+\sum_{i\in I}\|\eta^i\|_{\Lb^{2}_{\Fb,\R}}+\|\lambda\|_{\M^2\times L^2(\Omega;(C([0,T],\R)^{**})^n)}=1,\\
& -\xi_1= r_0 D_XJ(\widehat{X},\widehat{\alpha},\widehat{\zeta})(\cdot)+\sum_{i\in I}\E\left[\int_0^TD_X\Phi_i(\widehat{X},\widehat{\alpha},\widehat{\zeta})(\cdot)(t)\eta^i_t\d t\right]\nonumber\\
&\qquad\quad-\left\langle \lambda, D_XF(\widehat{X},\widehat{\alpha},\widehat{\zeta})(\cdot)\right\rangle_{\tH^*,\tH}~~\text{\rm in}~\mathbb{T}^2,\\
& -\xi_2= r_0 D_{\alpha}J(\widehat{X},\widehat{\alpha},\widehat{\zeta})(\cdot)+\sum_{i\in I}\E\left[\int_0^TD_{\alpha}\Phi_i(\widehat{X},\widehat{\alpha},\widehat{\zeta})(\cdot)(t)\eta^i_t\d t\right]\nonumber\\
&\qquad\quad-\left\langle \lambda, D_{\alpha}F(\widehat{X},\widehat{\alpha},\widehat{\zeta})(\cdot)\right\rangle_{\tH^*,\tH}~~\text{\rm in}~\Lb_{\Fb}^{2,{l}},\\
& -\xi_3= r_0 D_{\zeta}J(\widehat{X},\widehat{\alpha},\widehat{\zeta})(\cdot)+\sum_{i\in I}\E\left[\int_0^TD_{\zeta}\Phi_i(\widehat{X},\widehat{\alpha},\widehat{\zeta})(\cdot)(t)\eta^i_t\d t\right]\\
& \quad\qquad-\left\langle \lambda, D_{\zeta}F(\widehat{X},\widehat{\alpha},\widehat{\zeta})(\cdot)\right\rangle_{\tH^*,\tH}~~\text{\rm in}~ L^2(\Omega;(C([0,T];\R)^{**})^n),
\end{align*}
{where $\xi=(\xi_1,\xi_2,\xi_3)\in N_C(\widehat{X},\widehat{\alpha},\widehat{\zeta})\subset\tX^*$ with the normal cone $N_C(\widehat{X},\widehat{\alpha},\widehat{\zeta})$ defined by 
\begin{align}\label{eq:cnormalconne}
N_C(\widehat{\boldsymbol{x}}):=\{\xi\in\mathtt{X}^*:\langle\xi,\widehat{\boldsymbol{x}}-\boldsymbol{x}\rangle_{\mathtt{X}^*,\mathtt{X}}\geq 0,~\forall \boldsymbol{x}\in C\},\quad \widehat{\boldsymbol{x}}\in \mathtt{X},
\end{align} 
and $D_{\zeta}$ denotes the Fr\'{e}chet derivative with respect to $\zeta$ in the Banach space $\mathcal{M}([0,T])^k$.}
\end{lemma}
		
Then, we have the following main result in this section. 
\begin{theorem}[SMP for constrained MFC problem \eqref{cost_func0} with singular control]\label{SMP_con} 
Sppose that Assumption~ref{ass3} holds.
Let $(\widehat{X},\widehat{\alpha},\widehat{\zeta})\in C$ be an optimal solution to problem \eqref{cost_func0}. There exist some $r_0\geq 0$ and $\eta^i\in \Lb^2_{\Fb,\R+}$ for $i\in I$ such that 
\begin{itemize}
\item [{\rm(i)}] for $i\in I$, $\eta_t^i\geq 0$, $\d t\times\d \Pb$-$\as$ and $\eta^i\in\Lb^{2}_{\Fb,\R+}$, let $A^j=(A_t^i)_{t\in[0,T]}$ be the increasing process that $A_t^i=\int_0^t\eta_s^i\d s$. Then, for $\Pb$-$\as$ $\omega$, the process $A^i(\omega)$ only increases on the set $\{t\in[0,T];~\phi^j(t,\widehat{X}_t(\omega),\widehat{\mu}_t,\widehat{\alpha}_t(\omega))=0\}$, $\d t$-$\as$, with $\widehat{\mu}_t=\Law(\widehat{X}_t)$; 
\item [{\rm(ii)}] it holds that $\displaystyle r_0+\sum_{i\in I}\|\eta^i\|_{\Lb^{2}_{\Fb,\R}}=1$; 
\item [{\rm(iii)}]  $\Pb\big(r_0c_i(t)+G_i(t)^{\T}Y_t\geq 0,~\forall (t,i)\in[0,T]\times\{1,\ldots,k\}\big)=1$ with $c_i(t)$ and $G_i(t)$ being the $i$-th row of $c(t)$ and $G(t)$, respectively. It also holds that
\begin{align}\label{completeness}
\Pb\left(\sum_{i=1}^k\mathbf{1}_{\left\{r_0c_i(t)+G_i(t)^{\T}Y_t>0\right\}}\d\widehat{\zeta}_t^i=0\right)=1.
\end{align} 
\end{itemize}
Furthermore, the stochastic {\rm(}first-order{\rm)} minimum condition holds that,  $\d t\times\d \Pb$-a.s. 
			\begin{align}\label{SMP}
				&\left\langle \nabla_u H^{r_0}(t,\widehat{X}_t,\widehat{\mu}_t,\widehat{\alpha}_t,Y_t,Z_t)-\sum_{i\in I}\nabla_u\phi^i(t,\widehat{X}_t,\widehat{\mu}_t,\widehat{\alpha}_t)\eta_t^i, u-\widehat{\alpha}_t\right\rangle\geq 0,\quad \forall u\in U.
			\end{align}
			The Hamiltonian $H^{r_0}:[0,T]\times\R^n\times\R^l\times\R^n\times\mathcal{P}_2(\R^n)\times\R^{n\times r}\mapsto \R$ is defined  by
			\begin{align}\label{Hamiltonian}
				H^{r_0}(t,x,u,\mu,y,z):=\langle b(t,x,\mu,u),y\rangle+\tr\left(\sigma(t,x,\mu,u)z^{\T}\right)+r_0f(t,x,\mu,u),
			\end{align}
and the process pair $(Y,Z)=(Y_t,Z_t)_{t\in[0,T]}$ taking values in $\R^n\times\R^{n\times r}$ is the unique solution to the  constrained BSDE:
\begin{align}\label{BSDE}
Y_t&={r_0\nabla_xg(\widehat{X}_T,\widehat{\mu}_T)+r_0\E'\left[\pa_{\mu}g(\widehat{X}_T',\widehat{\mu}_T)(\widehat{X}_T)\right]}\nonumber\\
&\quad+\int_t^T\nabla_x H^{r_0}(s,\widehat{X}_s,\widehat{\mu}_s,\widehat{\alpha}_s,Y_s,Z_s)\d s+\int_t^T\E'\left[\pa_{\mu}H^{r_0}(s,\widehat{X}_s',\widehat{\mu}_s,\widehat{\alpha}_s',Y_s',Z_s')(\widehat{X}_s)\right]\d s\nonumber\\
&\quad-\sum_{i\in I}\int_t^T\nabla_x\phi^i(s,\widehat{X}_s,\widehat{\mu}_s,\widehat{\alpha}_s)\eta^i_s\d s-\sum_{i\in I}\int_t^T\E'\left[\pa_{\mu}\phi^i(s,\widehat{X}_s',\widehat{\mu}_s,\widehat{\alpha}_s')(\widehat{X}_s){\eta^i_s}'\right]\d s\nonumber\\
&\quad-\int_t^TZ_s\d W_s.
\end{align}
Here, $(\widehat{X}',\widehat{\alpha}',Y',Z',\eta^{i'})$ is  an independent copy of $(\widehat{X},\widehat{\alpha},Y,Z,\eta^{i})$ defined on the probability space $(\Omega',\F',P')$.
\end{theorem}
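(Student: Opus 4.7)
The plan is to specialise the abstract Fritz--John condition in \lemref{lem:FJCondAbstractprob} to the concrete functionals $J$, $\Phi_i$, $F$ defined in \eqref{func_def}, and then translate the resulting variational identities into the SMP and the adjoint BSDE via an Itô representation of the equality-constraint multiplier. The six items in the statement will fall out, respectively, from the complementary slackness, the normalisation, the dual cone condition at the singular part, and the three partial FJ stationarity equations.

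First I would compute the partial Fréchet derivatives at $(\widehat X,\widehat\alpha,\widehat\zeta)$. Under \assref{ass3}, the derivatives $D_XJ, D_\alpha J, D_X\Phi_i, D_\alpha\Phi_i$ are bounded linear operators on the respective components of $\tX=\mathbb{T}^2\times\Lb_{\Fb}^2\times L^2(\Omega;(\Mc[0,T])^n)$, with kernels given by $\nabla_x f,\nabla_u f,\pa_\mu f$ and $\nabla_x\phi^i,\nabla_u\phi^i,\pa_\mu\phi^i$; the differentiation in the law argument proceeds via the standard Lions-derivative chain rule together with the decoupling trick on an independent copy $(\Omega',\F',P')$. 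The equality constraint $F$ is affine, so $D_XF$ is the identity on $\mathscr{X}$ minus the linearisations of $b,\sigma$, while $D_\alpha F$ and $D_\zeta F$ are the linearisations of $b,\sigma$ in $u$ and the map $\zeta\mapsto\int_0^{\cdot}G(s)\,\d\zeta_s$, respectively; $D_\zeta J$ contributes only the term $c(t)\,\d\zeta_t$.

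The key step is identifying the multiplier $\lambda=(\lambda^1,\lambda^2)\in\M^2\times L^2(\Omega;(C([0,T];\R)^{**})^n)$. Using the inner product \eqref{eq:innerM2} on $\M^2$ and the Itô representation \eqref{eq:ito-rep}, I write $\lambda^1_t=Y_t$ with Brownian integrand $Z$. Testing the $D_X$-equation of \lemref{lem:FJCondAbstractprob} against admissible variations $X\in\mathscr{X}$ with the singular component $X^3\equiv 0$, integrating by parts on the stochastic integrals and invoking the arbitrariness of the variation forces $(Y,Z)$ to satisfy the stated constrained BSDE with terminal condition $Y_T=0$. Standard BSDE theory under \assref{ass3} then yields existence and uniqueness of $(Y,Z)$. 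With $(Y,Z)$ fixed, testing the $D_\alpha$-equation against arbitrary $u-\widehat\alpha\in\Lb_{\Fb}^2$ with $u\in U$, and identifying the normal cone to the convex set $\U[0,T]$, yields \eqref{SMP}. For the singular control, dualising $D_\zeta F$ against $\lambda$ produces a pairing with $G(\cdot)^\T Y$ and, combined with the $r_0c(t)$ contribution from $D_\zeta J$ and the polar-cone characterisation of the normal cone to $\mathcal{A}^k[0,T]$ at $\widehat\zeta$, gives $r_0 c_i(t)+G_i(t)^\T Y_t\geq0$ together with the complementary slackness \eqref{completeness}. Claim (i) is precisely the FJ complementary slackness $\E[\int_0^T\Phi_i(\widehat X,\widehat\alpha,\widehat\zeta)(t)\eta^i_t\,\d t]=0$ rewritten via $\eta^i\geq0$, $\Phi_i\leq0$.

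The main obstacle is twofold. First, one must pass rigorously from the abstract identities living in $\M^2$ and in the bidual space $L^2(\Omega;(C([0,T];\R)^{**})^n)$ to the pointwise-in-$(t,\omega)$ statements involving $(Y,Z)$ and the singular measure $\d\widehat\zeta$; this requires a careful duality argument selecting the absolutely continuous and purely singular parts of $\lambda^2$ and exploiting the standing assumption that the singular-control measures are mutually singular with the Lebesgue measure, so that the $\M^2$-part captures exactly the Itô adjoint while the measure-valued part captures the constraint on $G^\T Y$ along $\d\widehat\zeta$. Second, one must rule out the degenerate case $r_0=0$ and then rescale to obtain $r_0+\sum_{i\in I}\|\eta^i\|_{\Lb^2_{\Fb,\R}}=1$; I would handle this by verifying a Slater-type constraint qualification analogous to the one employed in \cite{BWY}, exploiting the surjectivity of $D_XF$ (which is immediate from the affine structure of $F$) together with an interior-point assumption on the inequality constraints $\Phi_i$.
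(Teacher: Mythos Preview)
Your overall strategy---specialise \lemref{lem:FJCondAbstractprob}, identify the multiplier $\lambda^1\in\M^2$ with the adjoint pair $(Y,Z)$ by testing the $D_X$-equation and integrating by parts, then read off \eqref{SMP} and the singular-control conditions from the $D_\alpha$- and $D_\zeta$-equations---is exactly the paper's approach, and your description of the Fr\'echet derivatives and the role of the Lions-derivative decoupling is accurate.

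There is, however, a genuine error in your second ``obstacle''. You write that one must \emph{rule out} $r_0=0$ via a Slater-type constraint qualification. That is neither required by the theorem nor done in the proof: the statement only asserts $r_0\ge 0$ together with the normalisation (ii), which is obtained not by excluding $r_0=0$ but by showing that $r_0$ and the $\eta^i$ cannot \emph{all} vanish simultaneously (if they did, the $D_X$-stationarity would force $\lambda=0$ as well, contradicting the FJ normalisation in \lemref{lem:FJCondAbstractprob}), and then rescaling. The constraint qualification you have in mind is only invoked later, in \lemref{KKT}, under the extra linear-structure hypothesis \assref{ass4}-(C1), to upgrade FJ to KKT with $r_0=1$. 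Attempting it here would require assumptions the theorem does not make.

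A smaller technical point: you identify $\lambda^1$ directly with the It\^o process $Y$. The paper is more careful: writing $\lambda_1(\cdot)=\lambda_1(0)+\int_0^{\cdot}\lambda_1^1\,\d s+\int_0^{\cdot}\lambda_1^2\,\d W_s$, it defines an auxiliary process $p$ by the candidate BSDE with $(\lambda_1^1,\lambda_1^2)$ in the driver and then proves, via the pairing $\E[\int_0^T\langle p_t,\d L_t\rangle]$ and the arbitrariness of the test process $L$, that $p$ coincides with $\lambda_1^1$; only then does $(Y,Z):=(\lambda_1^1,\lambda_1^2)$ solve \eqref{BSDE}. Your ``test against variations with $X^3\equiv 0$ and integrate by parts'' is the right idea, but the identification is at the level of the \emph{drift} $\lambda_1^1$, not $\lambda_1$ itself. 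Relatedly, the bidual component $\lambda_2$ is handled not by a Lebesgue-type decomposition as you suggest, but by observing that the same $D_X$-equation, tested against purely singular variations, forces $G^{\T}\lambda_2=G^{\T}\lambda_1^1$ in $(C([0,T];\R)^{**})^k$, which puts $G^{\T}\lambda_2$ back in $C([0,T];\R^k)$ and makes the $\zeta$-optimality computation tractable.
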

		
\begin{proof}
 By using definition of the normal cone $N_C(\widehat{X},\widehat{\alpha},\widehat{\zeta})$ {(see \eqref{eq:cnormalconne})}, for all $(X,\alpha,\zeta)\in C$ and $\xi=(\xi_1,\xi_2,\xi_3)\in N_C(\widehat{X},\widehat{\alpha},\widehat{\zeta})$,
\begin{align*}
\langle \xi_1,\widehat{X}-X\rangle_{\mathbb{T}^2}+\langle\xi_2,\widehat{\alpha}-\alpha\rangle_{\Lb_{\Fb}^{2,{l}}}+\langle\xi_3,\zeta-\widehat{\zeta}\rangle_{(C([0,T];\R)^{**})^n,({\cal M}[0,T])^n} \geq 0.    
\end{align*}
{Firstly, we claim that $r_0$ and $(\eta^i)_{i\in I}$ cannot be zero simultaneously. Otherwise, we assume that $r_0=0$ and $\eta^i=0$ for $i\in I$ as the 1st step in the proof of Theorem 3.12 in \cite{BWY}. Therefore, it holds by setting $\widehat{\alpha}=\alpha$, $\widehat{\zeta}=\zeta$ and Lemma~\ref{lem:FJCondAbstractprob} that,  for every $X\in\mathscr{X}$,
{\small\begin{align*}
0&\leq \langle\xi_1,\widehat{X}-X\rangle_{\mathtt{T}^2}=\langle\lambda,D_XF(\widehat{X},\widehat{\alpha})(X-\widehat{X})\rangle_{\mathtt{H}^*,\mathtt{H}}\\
&=X_t-\widehat{X}_t-\int_0^t\nabla_xb(s,\widehat{X}_s,\widehat{\mu}_s,\widehat{\alpha}_s)^{\T}(X_s-\widehat{X}_s)\d s-\int_0^t\nabla_x\sigma(s,\widehat{X}_s,\widehat{\mu}_s,\widehat{\alpha}_s)^{\T}(X_s-\widehat{X}_s)\d W_s\\
&\quad-\int_0^t\E'\left[\pa_{\mu}b(s,\widehat{X}_s,\widehat{\mu}_s,\widehat{\alpha}_s)(\widehat{X}_s')^{\T}(X_s'-\widehat{X}_s')\right]\d s-\int_0^t\E'\left[\pa_{\mu}\sigma(s,\widehat{X}_s,\widehat{\mu}_s,\widehat{\alpha}_s)(\widehat{X}_s')^{\T}(X_s'-\widehat{X}_s')\right]\d W_s.
\end{align*}}By setting $L = X - \widehat{X}$ and allowing $L$ to vary arbitrarily in $\mathbb{M}^2$, we conclude that $\langle \lambda, D_X F(\widehat{X}, \widehat{\alpha})(L) \rangle_{\mathtt{H}^*, \mathtt{H}} = 0$ for all  $L\in\mathbb{M}^2$. In light of Lemma 3.9 and Lemma 3.10 of \cite{BWY}, we deduce that $\lambda=0$ that contradicts with the non-degeneracy of the multipliers.  Hence, the claim holds and we can scale the multipliers such that {(ii)} holds.} Then, we can conclude from the structure of Lions derivative (c.f. Section 5.2.1 in \cite{Carmona2015}) and Lemma~\ref{lem:FJCondAbstractprob} that
\begin{align}\label{Xoptimality}
0&\leq{ \E\left[\left\langle \nabla_xg(\widehat{X}_T,\widehat{\mu}_T),L_T\right\rangle+\E'\left[\left\langle \pa_{\mu}g(\widehat{X}_T,\widehat{\mu}_T)(\widehat{X}_T'),L_T'\right\rangle\right]\right]}\nonumber \\
&\quad+r_0\E\left[\int_0^T\langle \nabla_xf(t,\widehat{X}_t,\widehat{\mu}_t,\widehat{\alpha}_t),L_t\rangle\d t+\E'\left[\int_0^T\langle\pa_{\mu}f(t,\widehat{X}_t,\widehat{\mu}_t,\widehat{\alpha}_t)(\widehat{X}_t'),L_t'\rangle\d t\right]\right]\nonumber\\
&\quad-\sum_{i\in I}\E\left[\int_0^T\left\langle\nabla_x\phi^i(t,\widehat{X}_t,\widehat{\mu}_t,\widehat{\alpha}_t)+\E'\left[\pa_{\mu}\phi^i(t,\widehat{X}_t,\widehat{\mu}_t,\widehat{\alpha}_t)(\widehat{X}_t')\right],L_t'\right\rangle\eta^i_t\d t\right]\nonumber\\
&\quad-\E\left[\langle\lambda_1(0),L_0\rangle+\int_0^T\langle\lambda_1^1(t),L_t^1-\nabla_xb(t,\widehat{X}_t,\widehat{\mu}_t,\widehat{\alpha}_t)L_t\rangle\d t\right.\nonumber\\
&\quad+\int_0^T\tr\left(\lambda_1^2(t)(L_t^2-\nabla_x\sigma(t,\widehat{X}_t,\widehat{\mu}_t,\widehat{\alpha}_t)L_t)^{\T}\right)\d t \nonumber\\
&\quad+\int_0^{T}\left\langle\lambda_1^1(t),\E'\left[\pa_{\mu}b(t,\widehat{X}_t,\widehat{\mu}_t,\widehat{\alpha}_t)(\widehat{X}_t')L_t'\right]\right\rangle\d t\nonumber\\
&\quad+\left.\int_0^{T}\tr\left(\lambda_1^2(t)\left(\E'\left[\pa_{\mu}\sigma(t,\widehat{X}_t,\widehat{\mu}_t,\widehat{\alpha}_t)(\widehat{X}_t')L_t'\right]\right)^{\T}\right)\d t\right]\nonumber\\
&\quad+\left\langle \lambda_2,\nu\right\rangle  _{(C([0,T];\R)^{**})^n,(\Mc[0,T])^n},
\end{align}
where $\lambda_1\in\M^2$ has the decomposition $\lambda_1(\cdot)=\lambda_1(0)+\int_0^{\cdot}\lambda_1^1(s)\d s+\int_0^{\cdot}\lambda_1^2(s)\d W_s$, $L=X-\widehat{X}$, with $X\in\mathscr{X}$ has the representation  $L_{\cdot}=L_0+\int_0^{\cdot}L_s^1\d s+\int_0^{\cdot}L_s^2\d W_s+\int_0^{\cdot}G(s)\d L_s^3$, and $\nu$ is the associated radon measure of the mapping $\int_0^{\cdot}G(s)\d L_s^3\in \BV^n[0,T]$.
Define an element $p=(p_t)_{t\in[0,T]}$ in $\M^2$ that, for $t\in[0,T]$,		
\begin{align}\label{adjointp}
p_t&={r_0\nabla_xg(\widehat{X}_T,\widehat{\mu}_T)+r_0\E'\left[\pa_{\mu}g(\widehat{X}_T',\widehat{\mu}_T)(\widehat{X}_T)\right]}\nonumber\\
&\quad-\sum_{i\in I}\int_t^T\left(\nabla_x\phi^i(s,\widehat{X}_s,\widehat{\mu}_s,\widehat{\alpha}_s)+\E'\left[\pa_{\mu}\phi^i(s,\widehat{X}_s',\widehat{\mu}_s,\widehat{\alpha}_s')(\widehat{X}_s){\eta^i_s}'\right]\right)\d s\nonumber\\
&\quad+\int_t^T\left[r_0\nabla_xf(s,\widehat{X}_s,\widehat{\mu}_s,\widehat{\alpha}_s)+\nabla_xb(s,\widehat{X}_s,\widehat{\mu}_s,\widehat{\alpha}_s)\lambda_1^1(s)+\nabla_x\sigma(s,\widehat{X}_s,\widehat{\mu}_s,\widehat{\alpha}_s)\lambda_1^2(s) \right]\d s\nonumber\\
&\quad+\int_t^T\E'\left[r_0\pa_{\mu}f(s,\widehat{X}_s',\widehat{\mu}_s,\widehat{\alpha}_s')(\widehat{X}_s)+\pa_{\mu}b(s,\widehat{X}_s',\widehat{\mu}_s,\widehat{\alpha}_s')(\widehat{X}_s){\lambda_1^1(s)}'\right.\nonumber\\
&\qquad\qquad+\left.\pa_{\mu}\sigma(s,\widehat{X}_s',\widehat{\mu}_s,\widehat{\alpha}_s')(\widehat{X}_s){\lambda_1^2(s)}' \right]\d s-\int_t^T\lambda_1^2(s)\d W_s,
\end{align}
where $MN:=\sum_{i=1}^rM_iL_i\in \R^n$ for any $M=(M_1,\ldots,M_r)\in\R^{n\times n\times r}$ and $L=(L_1,\ldots,L_r)\in\R^{n\times r}$. It then holds that
\begin{align}\label{equationL}
\langle \lambda_1^1(t),L_t^1\rangle\d t&=\langle\lambda_1^1(t)-p(t),\d L_t\rangle+\langle p(t),\d L_t\rangle-\langle\lambda^1_1(t),L^2_t\d W_t\rangle\nonumber\\
&\quad-\langle\lambda_1^1,\nu\rangle_{(C([0,T];\R)^{**})^n,(\Mc[0,T])^n}.
\end{align}
By using \eqref{adjointp} together with It\^{o}'s rule, we can conclude by Fubini's theorem that
\begin{align*}
&\E\left[\int_0^T\langle p_t,\d L_t\rangle\right]=-\E\left[\int_0^T\langle L_t,\d p_t\rangle\right]-\E\left[\langle p,L\rangle_T\right]{+\E\left[\langle p_T,L_T\rangle\right]-\E\left[\langle p_0,L_0\rangle\right]}\\
&=\E\left[\int_0^T\left\langle r_0\nabla_xf(t,\widehat{X}_t,\widehat{\mu}_t,\widehat{\alpha}_t),L_t\right\rangle\d t+\int_0^T\left\langle r_0\E'\left[\pa_{\mu}f(t,\widehat{X}_t',\widehat{\mu}_t,\widehat{\alpha}_t')(\widehat{X}_t)\right],L_t\right\rangle\d t \right]\\
&+\E\left[\int_0^T\left(\langle\lambda_1^1(t),\nabla_xb(t,\widehat{X}_t,\widehat{\mu}_t,\widehat{\alpha}_t)L_t\rangle+\tr\left(\lambda_1^2(t)\left(\nabla_x\sigma(t,\widehat{X}_t,\widehat{\mu}_t,\widehat{\alpha}_t)L_t\right)^{\T}\right)\right)\right]\\
&+\E\left[\int_0^{T}\left\langle\lambda_1^1(t),\E'\left[\pa_{\mu}b(t,\widehat{X}_t',\widehat{\mu}_t,\widehat{\alpha}_t')(\widehat{X}_t)L_t\right]\right\rangle\d t-\langle p_0,L_0\rangle\right]-\E\left[\tr\left(\lambda_1^2(t)L_t^2\right)\d t\right]\\
&+\E\left[\int_0^{T}\tr\left(\lambda_1^2(t)\left(\E'\left[\pa_{\mu}\sigma(t,\widehat{X}_t',\widehat{\mu}_t,\widehat{\alpha}_t')(\widehat{X}_t)L_t\right]\right)^{\T}\right)\d t\right]\\
&-\sum_{i\in I}\E\left[\int_0^T\left\langle \nabla_x\phi^i(t,\widehat{X}_t,\widehat{\mu}_t,\widehat{\alpha}_t),L_t\right\rangle\d t+\int_0^T\left\langle \E'\left[\pa_{\mu}\phi^i(t,\widehat{X}_t',\widehat{\mu}_t,\widehat{\alpha}_t')(\widehat{X}_t)\right],L_t\right\rangle\d t \right].
\end{align*}
Inserting \eqref{equationL} together with the above equality into \eqref{Xoptimality}, we derive
\begin{align*}
\E\left[\langle p_0-\lambda_1(0), L_0\rangle+\int_0^T\langle p_t-\lambda_1^1(t),\d L_t\rangle+\langle\lambda_2-\lambda_1^1,\nu\rangle_{(C([0,T];\R)^{**})^n,(\Mc[0,T])^n} \right]\geq0.
\end{align*}
By setting $\widehat{X}^3=X^3$ and $L_t=-(p_0-\lambda_1(0))-\int_0^t(p_s-\lambda_1^1(s))\d s$, we deduce that $p_0=\lambda_1(0)$ and $p$ is indistinguishable from $\lambda_1^1$ by the arbitrariness of $L$.  As a result, $\lambda_1^1$ solves \eqref{BSDE}. Moreover, by letting $X^1=\widehat{X}^1$ and $\widehat{X}^2=X^2$, we have $G^{\T}\lambda_2=G^{\T}\lambda_1^1$ in $(C([0,T];\R)^{**})^k$, and hence $G^{\T}\lambda_2\in C([0,T];\R^k)$. Here, $G$ can be viewed a mapping in  $\Lc((C([0,T];\R)^{**})^n;$ $(C([0,T];\R)^{**})^k)$.  By the optimality condition for $\widehat{\alpha}\in\U[0,T]$, we have
\begin{align}\label{SMP_pre}
0&\leq\E\left[\int_0^T\left\langle r_0\nabla_uf(t,\widehat{X}_t,\widehat{\mu}_t,\widehat{\alpha}_t)+\nabla_u b(t,\widehat{X}_t,\widehat{\rho}_t,\widehat{\alpha}_t)^{\T}\lambda_1^1(t)+\nabla_u\sigma(t,\widehat{X}_t,\widehat{\rho}_t,\widehat{\alpha}_t)^{\T}\lambda_1^2(t),\alpha_t-\widehat{\alpha}_t\right\rangle\d t\right.\nonumber\\
&\quad-\left.\sum_{i\in I}\int_0^T\left\langle\nabla_u\phi^i(t,\widehat{X}_t,\widehat{\mu}_t,\widehat{\alpha}_t)\eta_t^i,\alpha_t-\widehat{\alpha}_t\right\rangle\d t\right].
\end{align}
Exploiting the fact that $(\lambda^1_1,\lambda_1^2)$ solves \eqref{BSDE}, we then can derive that \eqref{SMP_pre} indicates \eqref{SMP} by the arbitrariness of $\alpha\in \U[0,T]$. By the optimality condition for $\widehat{\zeta}\in\mathcal{A}^k[0,T]$, we arrive at
\begin{align*}
0&\leq-\E\left[\langle\xi_3,\eta-\widehat{\zeta}\rangle_{(C([0,T];\R)^{**})^n,({\cal M}[0,T])^n}\right]\\
&=\E\left[r_0\int_0^Tc(t)\d(\eta_t-\hat{\zeta}_t)+\left\langle \lambda_2,\int_0^TG(t)\d(\eta_t-\hat{\zeta}_t)\right\rangle_{(C([0,T];\R)^{**})^n,({\cal M}[0,T])^n}\right]\\
&=\E\left[r_0\int_0^Tc(t)\d(\eta_t-\hat{\zeta}_t)+\int_0^TG(t)^{\T}\lambda^1_1(t)\d(\eta_t-\hat{\zeta}_t)\right].
\end{align*}
For the last equality, we have used the fact that $G^{\T}\lambda_2=G^{\T}\lambda_1^1$. Taking $\eta_t=\hat{\zeta}_t+\delta_t\in \mathcal{A}^k[0,T]$ for any $\delta=(\delta_t)_{t\in[0,T]}\in\mathcal{A}^k[0,T]$, we derive that
\begin{align*}
\Pb\left(r_0c_i(t)+G_i(t)^{\T}Y_t\geq 0,~\forall (t,i)\in[0,T]\times\{1,\ldots,k\}\right)=1.  
\end{align*}
Moreover, we take $\eta_t\equiv 0$, this results in \eqref{completeness}.  Thus, the proof of the theorem is complete by just noting that (i) follows from Lemma~\ref{lem:FJCondAbstractprob}.
\end{proof}
		
\begin{remark}
When there are no constraints (i.e. $m=0$) and distribution-dependent terms, Theorem~\ref{SMP_con} together with the constraints qualification (CQ) condition derived in \cite{BWY} will reduce to the SMP  as in \cite{Bahlali}.
\end{remark}
		
\section{Uniqueness and Stability of Solution to Constrained FBSDE}\label{sec:uniqueFBSDE}
		
The aim of this section is to study the uniqueness and stability of solutions to the following generalized type of constrained FBSDE derived in Theorem~\ref{SMP_con}. Without loss of generality, we assume that $d=1$:  
\begin{align}\label{FBSDE}
\widehat{X}_t&=\xi+\int_0^tb(s,\widehat{X}_s,\widehat{\mu}_s,\widehat{\alpha}_s)\d s+\int_0^t\sigma(s,\widehat{X}_s,\widehat{\mu}_s,\widehat{\alpha}_s)\d W_s+\int_0^t G(s)\d\widehat{\zeta}_s,\nonumber\\
Y_t&={r_0\nabla_xg(\widehat{X}_T,\widehat{\mu}_T)+r_0\E'\left[\pa_{\mu}g(\widehat{X}_T',\widehat{\mu}_T)(\widehat{X}_T)\right]}\nonumber\\
&\quad+\int_t^T\nabla_x H^{r_0}(s,\widehat{X}_s,\widehat{\mu},\widehat{\alpha}_s,Y_s,Z_s)\d s+\int_t^T\E'\left[\pa_{\mu}H^{r_0}(s,\widehat{X}_s',\widehat{\mu}_s,\widehat{\alpha}_s',Y_s',Z_s')(\widehat{X}_s)\right]\d s\nonumber\\
&\quad-\int_t^T\nabla_x\phi(s,\widehat{X}_s,\widehat{\mu}_s,\widehat{\alpha}_s)\eta_s\d s-\int_t^T\E'\left[\pa_{\mu}\phi(s,\widehat{X}_s',\widehat{\mu}_s,\widehat{\alpha}_s')(\widehat{X}_s){\eta_s}'\right]\d s\nonumber\\
&\quad-\int_t^TZ_s\d W_s
\end{align}
subject to $|r_0|+\|\eta\|_{\Lb_{\Fb,\R}}=1$,  $\widehat{\mu}_t=\Law(\widehat{X}_t)$, $\Pb\left(G(t)Y_t+{ r_0}c(t)\geq 0,~\forall t\in [0,T]\right)=1$, and $\d t\times\d\Pb$-a.s.
\begin{align}\label{FBSDE2}
\begin{cases}
\displaystyle \left\langle \nabla_u H^{r_0}(t,\widehat{X}_t,\widehat{\mu}_t,\widehat{\alpha}_t,Y_t,Z_t)-\sum_{i\in I}\nabla_u\phi^i(t,\widehat{X}_t,\widehat{\mu}_t,\widehat{\alpha}_t)\eta_t^i, u-\widehat{\alpha}_t\right\rangle\geq 0,~ \forall u\in U,\\[1em]
\displaystyle
\int_0^T \phi(t,\widehat{X}_t,\widehat{\mu}_t,\widehat{\alpha}_t)\eta_t\d t=0,\quad \phi\big(t,\widehat{X}_t,\widehat{\mu}_t,\widehat{\alpha}_t\big)\geq 0,\quad \eta_t\geq 0,\\[1em]
\displaystyle
~\Pb\left(\sum_{i=1}^k\mathbf{1}_{\{r_0c_i(t)+G_i(t)^{\T}Y_t>0\}}\d\widehat{\zeta}_t^i=0\right)=1,
\end{cases}
\end{align}
which characterizes the necessary optimality condition of the constrained MFC problem \eqref{cost_func0} with singular control. To prove the uniqueness of solution $(\widehat{X},\widehat{\alpha},\widehat{\zeta},Y,Z,\eta)=(\widehat{X}_t,\widehat{\alpha}_t,\widehat{\zeta}_t,Y_t,Z_t,\eta_t)_{t\in [0,T]}$ satisfying  \eqref{FBSDE}-\eqref{FBSDE2}, we first give the definition of uniqueness of solutions to Eq.~\eqref{FBSDE}-\eqref{FBSDE2}:
		
\begin{definition}[Uniqueness of solutions to FBSDE~\eqref{FBSDE}-\eqref{FBSDE2}]\label{FBSDE_solution}
Let $(\widehat{X}^i,\widehat{\alpha}^i,\widehat{\zeta}^i,Y^i,Z^i,\eta^i)$ $=(\widehat{X}_t^i,\widehat{\alpha}_t^i,\widehat{\zeta}_t^i,Y_t^i,Z_t^i,\eta_t^i)_{t\in [0,T]}$ for $i=1,2$ be $\Fb$-adapted processes valued in $\R^n\times U\times \R_+^k\times\R^n\times\R^{n\times r}\times \R_+$ satisfying the coupling constrained FBSDE \eqref{FBSDE}-\eqref{FBSDE2}. The solution of FBSDE \eqref{FBSDE}-\eqref{FBSDE2} is said to be unique if $\widehat{X}^1=\widehat{X}^2$, $\widehat{\alpha}^1=\widehat{\alpha}^2$, $Y^1=Y^2$, $Z^1=Z^2$, $\eta^1=\eta^2$  and $\int_0^{\cdot}G(s)\d\widehat{\zeta}_s^1=\int_0^{\cdot}G(s)\d\widehat{\zeta}_s^2$.
\end{definition}
		
\begin{remark}\label{rem:uniqueFNSDE}
In Definition~\ref{FBSDE_solution}, we only require $\int_0^{\cdot}G(s)\d\widehat{\zeta}_s^1=\int_0^{\cdot}G(s)\d\widehat{\zeta}_s^2$ rather than $\widehat{\zeta}^1=\widehat{\zeta}^2$ . Once the existence of optimal triple $(\widehat{X},\widehat{\alpha},\widehat{\zeta})=(\widehat{X}_t,\widehat{\alpha}_t,\widehat{\zeta}_t)_{t\in[0,T]}$ is guaranteed, the FBSDE \eqref{FBSDE}-\eqref{FBSDE2} has a solution with the backward component $(Y,Z,\eta)=(Y_t,Z_t,\eta_t)_{t\in[0,T]}$ being the multipliers given in the last section. Then,  by using Corollary \ref{optimality_exist}, we can conclude that, under Assumption \ref{ass1} and Assumption \ref{ass2}, there exists a solution to the generalized type constrained FBSDE \eqref{FBSDE}-\eqref{FBSDE2}.    
\end{remark}
		
We now verify the {constraint qualification (CQ) condition as in (1.4) of \cite{Zowe} to establish the Karush-Kuhn-Tucker (KKT) optimality condition (i.e., we verify $r_0=1$ in Lemma \ref{lem:FJCondAbstractprob})}. 
\begin{lemma}\label{KKT}
Under Assumption~\ref{ass4}-{\rm(C1)}, the KKT optimality condition holds. More precisely, if $(\widehat{X},\widehat{\alpha},\widehat{\zeta})\in C:=\M^2\times\U[0,T]\times\mathcal{A}^k[0,T]$ is an optimal solution to the constrained MFC problem \eqref{cost_func0}, there exists some $\eta\in \Lb^2_{\Fb,\R+}$ such that 
\begin{itemize}
\item [{\rm(i)}] $\E\left[\int_0^T\phi(t,\widehat{X}_t,\Law(\widehat{X}_t))\eta_t\d t\right]=0$; 
\item[{\rm(ii)}] $\Pb(c_i(t)+G_i(t)^{\T}Y_t\geq 0,~\forall (i,t)\in\{1,\ldots,k\}\times[0,T])=1$, where $c_i(t)$ and $G_i(t)$ are the $i$-th row of $c(t)$ and $G(t)$, respectively. It also holds that
\begin{equation}\label{completeness1}
\Pb\left(\sum_{i=1}^k\mathbf{1}_{\{c_i(t)+G_i(t)^{\T}Y_t>0\}}\d\widehat{\zeta}_t^i=0\right)=1.
\end{equation} 
\end{itemize}
Moreover, the stochastic {\rm(}first-order{\rm)} minimum condition holds that, $\d t\otimes\d \Pb$-a.s.
\begin{align}\label{SMP1}
&\left\langle \nabla_u H(t,\widehat{X}_t,\widehat{\mu}_t,\widehat{\alpha}_t,Y_t,Z_t), u-\widehat{\alpha}_t\right\rangle\geq 0,\quad \forall u\in U.
\end{align}
Here, the Hamiltonian $H(t,x,\mu,u,y,z):=H^{1}(t,x,\mu,u,y,z)$ for $(t,x,\mu,u,y,z)\in[0,T]\times\R^n\times\Pc_2(\R^n)\times\R^l\times\R^n\times\R^{n\times r}$, and the process pair $(Y,Z)=(Y_t,Z_t)_{t\in[0,T]}$ taking values in $\R^n\times\R^{n\times r}$ is the unique solution to the following general type of constrained BSDE:
\begin{align}\label{BSDE}
Y_t&={\nabla_xg(\widehat{X}_T,\widehat{\mu}_T)+\E'\left[\pa_{\mu}g(\widehat{X}_T',\widehat{\mu}_T)(\widehat{X}_T)\right]}\nonumber\\
&\quad+\int_t^T\nabla_x {H}(s,\widehat{X}_s,\widehat{\mu}_s,\widehat{\alpha}_s,Y_s,Z_s)\d s+\int_t^T\E'\left[\pa_{\mu}H(s,\widehat{X}_s',\widehat{\mu}_s,\widehat{\alpha}_s',Y_s',Z_s')(\widehat{X}_s)\right]\d s\nonumber\\
&\quad-\int_t^T\nabla_x\phi(s,\widehat{X}_s,\widehat{\mu}_s)\eta_s\d s-\int_t^T\E'\left[\pa_{\mu}\phi(s,\widehat{X}_s',\widehat{\mu}_s)(\widehat{X}_s)\eta_s'\right]\d s-\int_t^TZ_s\d W_s.
\end{align}
\end{lemma}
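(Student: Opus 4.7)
The plan is to upgrade the Fritz-John condition of \thmref{SMP_con} to KKT optimality by verifying the Robinson-Zowe-Kurcyusz constraint qualification (1.4) of \cite{Zowe} at the optimizer, which will force the multiplier $r_0 > 0$. After rescaling to normalize $r_0 = 1$, items (i)-(ii) together with the SMP \eqref{SMP1} and the BSDE \eqref{BSDE} then follow directly from the corresponding assertions of \thmref{SMP_con}, since under (C1) the identity $\nabla_u \phi \equiv 0$ removes the extra term in the stochastic minimum condition and $\phi$ no longer depends on $\widehat{\alpha}$.

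First, I would apply \thmref{SMP_con} to the optimal triple $(\widehat X, \widehat \alpha, \widehat\zeta)$ to obtain FJ multipliers $r_0 \geq 0$ and $\eta \in \Lb^2_{\Fb,\R+}$ with $r_0 + \|\eta\|_{\Lb^2_{\Fb,\R}} = 1$, adjoint processes $(Y,Z)$ solving the constrained BSDE, the complementarity $\E[\int_0^T \phi(t, \widehat X_t, \widehat\mu_t)\eta_t\,\d t] = 0$, and the singular-control orthogonality \eqref{completeness}. All assertions of the lemma then reduce to showing that $r_0 \neq 0$.

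Next, I would verify the CQ at $(\widehat X, \widehat \alpha, \widehat \zeta)$ for problem \eqref{optimization}. Under (C1), $\phi$ is affine in $(x,\mu)$ and independent of the control, so the linearized inequality constraint becomes $D\Phi(\widehat X, \widehat \alpha, \widehat\zeta)(L, \delta\alpha, \delta\zeta)(t) = -A(t)L_t - B(t)\E[L_t]$. The CQ requires: (a) surjectivity of $DF(\widehat X, \widehat \alpha, \widehat\zeta)$ from the tangent cone $T_C(\widehat X, \widehat\alpha, \widehat\zeta)$ onto $\mathbb{T}^2$, which follows from the well-posedness of the linearized McKean-Vlasov SDE with free initial condition and free $(\delta\alpha, \delta\zeta)$; and (b) existence of a Slater direction $(L, \delta\alpha, \delta\zeta) \in T_C(\widehat X, \widehat \alpha, \widehat \zeta)$ with $DF \cdot (L, \delta\alpha, \delta\zeta) = 0$ and $A(t)L_t + B(t)\E[L_t] \geq \varepsilon > 0$ uniformly on $[0,T]\times\Omega$. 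The affine structure of $\phi$, combined with the nondegeneracy $|A(t)|>0$ and $|A(t)+B(t)|>0$ and continuity of $A, B, C, G$ on the compact interval $[0,T]$, permits an explicit construction via a smooth deterministic perturbation $\delta\alpha$ aligned with $\nabla_u b^{\T}(t,\widehat X_t, \widehat\mu_t, \widehat\alpha_t)(A(t)+B(t))$ (rescaled to lie in the tangent cone of $\U[0,T]$ at $\widehat\alpha$), together with $\delta\zeta = 0$; the linearized forward SDE then produces an $L$ with mean $\E[L_t]$ satisfying the required uniform lower bound via a Gronwall estimate, while the stochastic fluctuation $L_t - \E[L_t]$ is controlled by standard $L^2$-estimates.

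The main obstacle is producing the Slater direction with a uniform-in-time lower bound $\varepsilon > 0$, especially near times where the inequality constraint is binding for the optimal triple. The linearity in (C1) reduces this to a deterministic linear control problem on the mean trajectory $t\mapsto\E[L_t]$, coupled with a controllable stochastic deviation; it is here that the two nondegeneracy conditions $|A(t)|>0$ and $|A(t)+B(t)|>0$ become essential, respectively ensuring nontrivial sensitivity of $\phi$ to pointwise state perturbations and to mean-state perturbations. Once the Slater direction is in hand, the Robinson-Zowe-Kurcyusz theorem yields $r_0 > 0$. Rescaling so that $r_0 = 1$ (with $\eta$ adjusted accordingly), the conclusions of the lemma are immediate restatements of \thmref{SMP_con} under (C1).
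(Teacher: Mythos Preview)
Your overall strategy---verify the Zowe--Kurcyusz constraint qualification at the optimizer to force $r_0>0$, then rescale and read off the KKT system from \thmref{SMP_con}---is exactly the paper's route. The paper's own proof is a single sentence, deferring the CQ verification to Lemma~3.9 of \cite{BWY} and invoking Theorem~3.1 of \cite{Zowe}; your proposal is essentially an attempt to reconstruct what that cited lemma must do under (C1).

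The gap lies in your Slater-direction construction. You want $(L,\delta\alpha,0)$ in the tangent cone with $DF\cdot(L,\delta\alpha,0)=0$ and $A(t)L_t+B(t)\E[L_t]\ge\varepsilon>0$ \emph{uniformly on $[0,T]\times\Omega$}. But if $L$ is produced from the linearized forward SDE (with $L_0=0$ forced by the linearized equality constraint), then $L_t$ inherits the diffusion term $\nabla_x\sigma(\cdot)L_t+\nabla_u\sigma(\cdot)\delta\alpha_t$ and is genuinely random; ``standard $L^2$-estimates'' on the fluctuation $L_t-\E[L_t]$ cannot deliver a pointwise-in-$\omega$ bound, so the uniform inequality can fail on a set of positive measure. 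More structurally, the cone $K=\Lb^2_{\Fb,\R+}$ has empty interior in $\Lb^2_{\Fb,\R}$, which makes any interior-point/Slater argument delicate in this space. The Zowe--Kurcyusz CQ (1.4) is a \emph{surjectivity} condition on conical hulls, and the verification in \cite{BWY} presumably argues directly that the map $L\mapsto -A(t)L_t-B(t)\E[L_t]$, together with the slack cone $\mathrm{cone}\bigl(K+\phi(\cdot,\widehat X,\widehat\mu)\bigr)$, covers all of $\Lb^2_{\Fb,\R}$---exploiting the freedom to choose $L$ in the full tangent space of $\mathscr{X}$ (not merely perturbations generated through the linearized dynamics via $\delta\alpha$) and the nondegeneracy $|A(t)|>0$, $|A(t)+B(t)|>0$. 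You should either invoke that lemma as the paper does, or rework your CQ verification along surjectivity lines rather than seeking a uniform strict-feasibility direction.
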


\begin{proof}
This is a direct consequence of Lemma 3.9 in \cite{BWY} and Theorem 3.1 of \cite{Zowe}.
\end{proof}
{As a result, we can set $r_0=1$ in \eqref{FBSDE}-\eqref{FBSDE2} under Assumption~\ref{ass4}}. The strict convexity imposed on the Hamiltonian $H$ w.r.t. the control variable $u\in U$ given in (C3) of Assumption~\ref{ass4} yields the result that once $\langle \nabla_u H(t,\widehat{X}_t,\widehat{\mu}_t,\widehat{\alpha}_t,Y_t,Z_t), u-\widehat{\alpha}_t\rangle\geq 0$ for all $u\in U$, we must have $\widehat{\alpha}_t=\hat{u}(t,\widehat{X}_t,\widehat{\mu}_t,Y_t,Z_t)$. Then, we have
\begin{theorem}[Uniqueness of solutions to FBSDE]\label{unique}
Let Assumption~\ref{ass3} and Assumption~\ref{ass4}  hold. The constrained FBSDE \eqref{FBSDE}-\eqref{FBSDE2} has a unique solution in the sense of Definition~\ref{FBSDE_solution}.	
\end{theorem}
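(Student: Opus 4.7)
The plan is to follow the classical monotonicity method for McKean--Vlasov FBSDEs and adapt it to the present KKT system with singular control. Fix two solutions $(\widehat X^i,\widehat\alpha^i,\widehat\zeta^i,Y^i,Z^i,\eta^i)$ ($i=1,2$) and write $\Delta X:=\widehat X^1-\widehat X^2$, with analogous notation for the other processes. Under {\rm(C1)} the constraint $\phi$ is control-free, so the pointwise minimum condition in \eqref{FBSDE2} together with the strict convexity of $H$ in {\rm(C2)} forces $\widehat\alpha^i_t=\hat u(t,\widehat X^i_t,\widehat\mu^i_t,Y^i_t,Z^i_t)$, reducing \eqref{FBSDE} to the $(\hat b,\hat\sigma,\hat F)$-form of {\rm(C3)}. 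Applying It\^o's product formula to $\Delta X_t^{\T}\Delta Y_t$ and using $\Delta X_0=0$, $\Delta Y_T=0$, continuity of $\Delta Y$, the fact that the only jumps of $\Delta X$ come from $G(t)\,\d\Delta\widehat\zeta_t$, and taking expectation to kill the $(\Pb,\Fb)$-martingale parts, one obtains, after a Fubini swap $(\omega,\omega')$ to symmetrize the McKean--Vlasov drift terms,
\begin{align*}
0&=\E\int_0^T\bigl[-\Delta\hat F^{\T}\Delta X+\Delta\hat b^{\T}\Delta Y+\tr(\Delta\hat\sigma\,\Delta Z^{\T})\bigr]\d t\\
&\quad+\E\int_0^T\Delta\phi_t\,\Delta\eta_t\,\d t+\E\int_0^T\Delta Y_t^{\T}G(t)\,\d\Delta\widehat\zeta_t,
\end{align*}
where the multiplier summand collapses to $\E\int\Delta\phi\,\Delta\eta\,\d t$ because under {\rm(C1)} the derivatives $\nabla_x\phi$ and $\partial_\mu\phi$ reduce to the deterministic $A^{\T},B^{\T}$.

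The next step is to sign each of the three summands. For the multiplier term, \lemref{KKT} gives $\E[\int_0^T\phi^i\eta^i\,\d t]=0$ with $\phi^i,\eta^i\geq 0$, whence $\E\int\Delta\phi\,\Delta\eta\,\d t=-\E\int\phi^1\eta^2\,\d t-\E\int\phi^2\eta^1\,\d t\leq 0$. For the singular-control term, \lemref{KKT} applies (its constraint qualification holds under {\rm(C1)}) yielding $r_0=1$ and the complementarity \eqref{completeness1}, which in turn gives $(G^{\T}Y^i)^{\T}\d\widehat\zeta^i=-c^{\T}\d\widehat\zeta^i$; expanding $\Delta Y^{\T}G\,\d\Delta\widehat\zeta$ into four cross terms and invoking $c+G^{\T}Y^i\geq 0$ together with $\d\widehat\zeta^j\geq 0$ gives
\[
\E\int_0^T\Delta Y_t^{\T}G(t)\,\d\Delta\widehat\zeta_t=-\E\int_0^T(c+G^{\T}Y^2)^{\T}\d\widehat\zeta^1-\E\int_0^T(c+G^{\T}Y^1)^{\T}\d\widehat\zeta^2\leq 0.
\]
For the monotonicity term, split each of $\Delta\hat b,\Delta\hat\sigma,\Delta\hat F$ at the common measure $\mu^2$, e.g.\ $\Delta\hat b=[\hat b(X^1,\mu^2,Y^1,Z^1)-\hat b(X^2,\mu^2,Y^2,Z^2)]+[\hat b(X^1,\mu^1,Y^1,Z^1)-\hat b(X^1,\mu^2,Y^1,Z^1)]$. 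The first brackets feed into {\rm(C3)} at $\mu=\mu^2$ and contribute $\leq-\beta\E\int(|\Delta X|^2+|\Delta Y|^2+|\Delta Z|^2)\,\d t$; the second brackets are controlled by $M\mathcal{W}_{2,\R^n}(\mu^1,\mu^2)$ via {\rm(C4)}, and after $\mathcal{W}_{2,\R^n}(\mu^1,\mu^2)^2\leq\E|\Delta X|^2$ and Young's inequality are absorbed, using the slack $\beta>M/2$ of {\rm(C3)}, into a strictly negative quadratic majorant $-\kappa\E\int(|\Delta X|^2+|\Delta Y|^2+|\Delta Z|^2)\,\d t$ for some $\kappa>0$.

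Combining the three signs yields $\E\int_0^T(|\Delta X|^2+|\Delta Y|^2+|\Delta Z|^2)\,\d t\leq 0$, whence $\Delta X\equiv\Delta Y\equiv\Delta Z\equiv 0$. The Lipschitz property of $\hat u$ in {\rm(C2)} then gives $\Delta\widehat\alpha=0$; uniqueness of the semimartingale decomposition of the vanishing process $\Delta X$ forces $\int_0^{\cdot}G(s)\,\d\widehat\zeta^1_s=\int_0^{\cdot}G(s)\,\d\widehat\zeta^2_s$; and matching the drifts of $Y^1$ and $Y^2$ reduces to the pointwise equality $A(t)\Delta\eta_t+B(t)\E[\Delta\eta_t]=0$ in $\R^n$, which upon taking expectations and using $|A+B|>0$ from {\rm(C1)} yields $\E[\Delta\eta]=0$, whereupon $|A|>0$ gives $\Delta\eta\equiv 0$. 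I expect the main obstacle to be the bookkeeping in the It\^o step: aligning the BSDE's McKean--Vlasov term $\E'[\partial_\mu H(X',\mu,\alpha',Y',Z')(X)]$ (in which $\alpha',Y',Z'$ vary with $X'$) with the fixed-$u$ integrand defining $\hat F$ in {\rm(C3)} via the $(\omega,\omega')$-swap, while simultaneously handling $\Delta X$'s jumps cleanly enough that \eqref{completeness1} can be invoked in the $\int G\,\d\widehat\zeta$ integrals without sign leakage.
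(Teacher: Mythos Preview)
Your proposal is correct and follows essentially the same monotonicity route as the paper: apply It\^o's formula to $\Delta X^{\T}\Delta Y$, use complementary slackness to sign the multiplier and singular-control cross terms, invoke (C3) together with the Lipschitz estimate in the measure argument to force $\Delta X=\Delta Y=\Delta Z=0$, and then recover $\Delta\widehat\alpha$, $\int_0^{\cdot}G\,\d\Delta\widehat\zeta$, and finally $\Delta\eta$ via (C1). One minor remark: you need not route through \lemref{KKT} for the singular-control sign---the non-negativity $c+G^{\T}Y\geq0$ and the complementarity condition are already part of the FBSDE constraints \eqref{FBSDE2} themselves (so any solution, not just one arising from an optimizer, carries them), and the paper simply uses them directly.
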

		
\begin{proof}
Let $(X^i_t,\zeta^i_t,Y^i_t,Z^i_t,\eta_t^i)_{t\in[0,T]}$ ($i=1,2$) be two solutions to the FBSDE \eqref{FBSDE}-\eqref{FBSDE2} satisfying $X^1_0=X_0^2$ and $Y_T^1=Y_T^2$. { It follows from Lemma~\ref{KKT} that the KKT condition holds under Assumption~\ref{ass3}, and hence we can set the coefficient $r_0=1$ in the Hamiltonian $H^{r_0}$. Recall $F(t,x,\mu,u,y,z):=\nabla_x H(t,x,\mu,u,y,z)+\int_{\R^n}\pa_{\mu}H(t,x',\mu,u,y,z)(x)\mu(\d x')$ defined in Assumption~\ref{ass4}-(C3).} Let us set $\bar{X}_t:=X_t^1-X_t^2$, $\bar{\zeta}_t:=\zeta_t^1-\zeta_t^2$, $\bar{Y}_t:=Y_t^1-Y_t^2$, $\bar{Z}_t:=Z_t^1-Z_t^2$ and $\bar{\eta}_t:=\eta_t^1-\eta_t^2$ for $t\in[0,T]$. Applying It\^{o}'s formula to $\bar{X}_t\bar{Y}_t$, we arrive at
%\begin{align*}
%\bar{X}_T\bar{Y}_T&=\int_0^T\left\{-\Delta \bar{F}(t)\bar{X}_t+\bar{Y}\Delta\bar{b}(t)+\tr(\bar{Z}_t\Delta\bar{\sigma}(t))\right\}\d t+\int_0^T\left(\bar{X}_t\bar{Z}_t+\bar{Y}_t\Delta\sigma(t)\right)\d W_t\\
%&\quad+\int_0^T\left(A(t)\bar{\eta}_t+B(t)\E[\bar{\eta}_t]\right)\bar{X}_t\d t+\int_0^TG(t)\bar{Y}_t\d\bar{\zeta}_t,
%\end{align*}
%where $\Delta \bar{F}(t):=\bar{F}(t,X_t^1,\Law(X_t^1),Y_t^1,Z_t^1)-\bar{F}(t,X_t^2,\Law(X_t^2),Y_t^2,Z_t^2)$, $\Delta \bar{b}(t):=\bar{b}(t,X_t^1,\Law(X_t^1),Y_t^1,Z_t^1)-\bar{b}(t,X_t^2,\Law(X_t^2),Y_t^2,Z_t^2)$ and  $\Delta\bar{\sigma}(t):=\bar{\sigma}(t,X_t^1,\Law(X_t^1),Y_t^1,Z_t^1)-\bar{\sigma}(t,X_t^2,\Law(X_t^2),Y_t^2,Z_t^2)$. 
{\begin{align*}
\bar{X}_T\bar{Y}_T&=\int_0^T\left\{-\Delta F(t)\bar{X}_t+\bar{Y}\Delta{b}(t)+\tr(\bar{Z}_t\Delta{\sigma}(t))\right\}\d t+\int_0^T\left(\bar{X}_t\bar{Z}_t+\bar{Y}_t\Delta\sigma(t)\right)\d W_t\\
&\quad+\int_0^T\left(A(t)\bar{\eta}_t+B(t)\E[\bar{\eta}_t]\right)\bar{X}_t\d t+\int_0^TG(t)\bar{Y}_t\d\bar{\zeta}_t,
\end{align*}}where the coefficients
{$\Delta\Psi=\Psi(X_T^1,\Law(X_T^1))-\Psi(X_T^2,\Law(X_T^2))$, $\Delta F(t):=F(t,X_t^1,\Law(X_t^1),Y_t^1,Z_t^1)-F(t,X_t^2,\Law(X_t^2),Y_t^2,Z_t^2)$, $\Delta b(t):=b(t,X_t^1,\Law(X_t^1),Y_t^1,Z_t^1)-b(t,X_t^2,\Law(X_t^2),Y_t^2,Z_t^2)$ and  $\Delta\sigma(t):=\sigma(t,X_t^1,\Law(X_t^1),Y_t^1,Z_t^1)-\sigma(t,X_t^2,\Law(X_t^2),Y_t^2,Z_t^2)$ }

Taking expectations on both sides and noticing {that} $\bar{Y}_T=\Delta\Psi$, we get
{\small\begin{align*}
\E\left[\bar X_T\Delta\Psi\right]&=\E\left[\int_0^T\left\{(\Phi(t,X_t^1,\Law(X_t^1),Y_t^1,Z_t^1)-\Phi(t,X_t^2,\Law(X_t^1),Y_t^2,Z_t^2))(X_t^1-X_t^2,Y_t^1-Y_t^2,Z_t^1-Z_t^2)\right.\right.\\
&\qquad~+\left.(\Phi(t,X_t^2,\Law(X_t^1),Y_t^2,Z_t^2)-\Phi(t,X_t^2,\Law(X_t^2),Y_t^2,Z_t^2))(X_t^1-X_t^2,Y_t^1-Y_t^2,Z_t^1-Z_t^2)\right\}\d t\\
&\qquad~+\left.\int_0^T\bar{\eta}_t\left\{a(t)\bar{X}_t+b(t)\E[\bar{X}_t]\right\}\d t+\int_0^TG(t)\bar{Y}_t\d\bar{\zeta}_t\right].
\end{align*}}Here, we applied Fubini's theorem in the last line. Note that, it holds that
\begin{align*}
&\E\left[\int_0^T\bar{\eta}_t\left\{A(t)\bar{X}_t+B(t)\E[\bar{X}_t]\right\}\d t\right]=\E\left[\int_0^T\bar{\eta}_t\left\{\phi(t,X_t^1,\Law(X_t^1))-\phi(t,X_t^2,\Law(X_t^2))\right\}\d t\right]\\
&\qquad\qquad=-\E\left[\int_0^T\left\{\phi(t,X_t^1,\Law(X_t^1))\eta_t^2+\phi(t,X_t^2,\Law(X_t^2))\eta_t^1\right\}\d t \right]\leq 0,
\end{align*}
where, we used the fact that $\E[\int_0^T\phi(t,X_t^i,\Law(X_t^i))\eta_t^i\d t]=0$, $\phi_i(t,X_t^i,\Law(X_t^i))\geq 0$ and $\eta_t^i\geq 0$ for $i=1,2$. Similarly, we have $\E[\int_0^TG(t)\bar{Y}_t\d\bar\zeta_t]\leq0$. Exploiting Assumption~\ref{ass4}, we deduce that 
\begin{align*}
0&\leq \E\left[\bar X_T\Delta\Psi\right]\leq\E\big[\left(\Phi(t,X_t^2,\Law(X_t^1),Y_t^2,Z_t^2)-\Phi(t,X_t^2,\Law(X_t^2),Y_t^2,Z_t^2)\right)\left(\bar{X}_t,\bar{Y}_t,\bar{Z}_t\right)\nonumber\\
&\qquad\qquad\qquad\qquad\quad-\beta\left(|\bar{X}_t|^2+|\bar{Y}_t|^2+|\bar{Z}_t|^2\right)\big].
			\end{align*}
However, it follows from Assumption~\ref{ass1}-(A3) that
\begin{align*}
&\left(\Phi(t,X_t^2,\Law(X_t^1),Y_t^2,Z_t^2)-\Phi(t,X_t^2,\Law(X_t^2),Y_t^2,Z_t^2)\right)(\bar{X}_t,\bar{Y}_t,\bar{Z}_t)\\
&\qquad\leq \frac{M}{2}\mathcal{W}_2(\Law(X_t^1),\Law(X_t^2))\left(|X_t^1-X_t^2|+|Y_t^1-Y_t^2|+|Z_t^1-Z_t^2|\right)\\
&\qquad\leq \frac{M}{2}\left(|X_t^1-X_t^2|^2+|Y_t^1-Y_t^2|^2+|Z_t^1-Z_t^2|^2\right).
\end{align*}
Hence, combining the above two inequalities leads to $\bar{X}=0,\bar{Y}=0$ and $\bar{Z}=0$. It then follows that $\bar{\alpha}=0$ and  $\int_0^{\cdot}G(s)\d\bar{\zeta}_s=0$. Moreover, we have $\int_0^tA(s)\bar{\eta}_s\d s+\int_0^tB(s)\E\left[\bar{\eta}_s\right]\d s=0$ for all $t\in[0,T]$. Taking expectations on both sides of the above result, together with (C1) in Assumption~\ref{ass4}, we conclude that $\E\left[\bar{\eta}_t\right]=0$, for $m$-a.e. $t\in [0,T]$, and hence $\int_0^tA(s)\bar{\eta}_s\d s=0$, for $m$-a.e.~$t\in[0,T]$. This indicates that $\bar{\eta}=0$ in $\Lb_{\Fb}^{2,1}$ by utilizing Assumption~\ref{ass4}-(C1). 
\end{proof}
	Finally, we examine the stability of the solution to the constrained FBSDE \eqref{FBSDE}-\eqref{FBSDE2}. To this end, we introduce the following enhanced conditions:
		{\it\begin{itemize}
			\item [\rm (C1')]
				The constrained function $\phi$ is independent of the control variable and admits the linear form that $\phi(t,x,\mu)=A(t)x+B(t)\int_{\R^n}x\mu(\d x)+C(t)$ for $(t,x,\mu)\in[0,T]\times\R^n\times\Pc_2(\R^n)$. Here, the coefficients {$(A, B)\in C([0,T],\R^n\times\R^n)$ and $C\in C([0,T];\R)$ satisfying $|A(t)|\geq c_0>0$ and $|A(t)+B(t)|\geq c_0>0$ for some $c_0>0$ independent of $t\in[0,T]$. }
            \item [\rm (C5')] Set $\Psi(x,\mu):=\nabla_x g(x,\mu)+\int_{\R^n}\pa_{\mu} g(y,\mu)(x)\mu(\d y)$ for $(x,\mu)\in\R^n\times\Pc_2(\R^n)$. The function $\Psi$ is Lipschitz continuous and satisfies the coercivity condition that, for some $\alpha\geq0$ and $(x^1,\mu^1,X^1),(x^2,\mu^2,X^2)\in \R^n\times\Pc_2(\R^n)\times L^2((\Omega,\F_T,\Pb),\R^n)$,
\begin{align*}
\E\left[\left\langle\Psi(X^1,\Law(X^1))-\Psi(X^2,\Law(X^2)),X^1-X^2\right\rangle\right]\geq \alpha\E[|X^1-X^2|^2].
\end{align*}
		\end{itemize}}
		Then, we have
		\begin{theorem}\label{stable}
			Let Assumption~\ref{ass3}, {\rm(C1')}, {\rm(C3)-(C4)} of Assumption~\ref{ass4} hold and  
			$\xi_1,\xi_2\in L^2((\Omega,\F_0,\Pb);\R^n)$. {Moreover, suppose that  {\rm (C5')} holds for either $\Psi^1$ or $\Psi^2$.} Denote by $(\widehat{X}^i,\widehat{\alpha}_t^i,\widehat{\zeta}^i_t,Y^i,Z^i,\eta^i)$ the unique solutions to the FBSDE \eqref{FBSDE}-\eqref{FBSDE2} with respective initial data $\xi_i$. Set $\bar{X}_t:=X_t^1-X_t^2$, $\bar{\zeta}_t:=\zeta_t^1-\zeta_t^2$, $\bar{Y}_t:=Y_t^1-Y_t^2$, $\bar{Z}_t:=Z_t^1-Z_t^2$ and  $\bar{\eta}_t:=\eta_t^1-\eta_t^2$ for $t\in[0,T]$. Then, there exists a constant $C>0$ depending on $(T,M,\alpha,\beta)$ only such that{
\begin{align}
&\E\left[\left|\bar{X}_T\right|^2+\int_0^T\left(|\bar{X}_t|^2+|\bar{\alpha}_t|^2+|\bar{Y}_t|^2+|\bar{Z}_t|^2+|\bar{\eta}_t|^2\right)\d t+\left|\int_0^TG(t)\d\bar{\zeta}_t\right|^2\right]\nonumber\\
&\qquad\qquad\leq C\left\{\E\left[|\xi_1-\xi_2|^2\right]+\sup_{(x,\mu)\in\R^n\times\Pc_2(\R^n)}\left|\Psi^1(x,\mu)-\Psi^2(x,\mu)\right|^2\right\}\label{stability}
\end{align}
with $\Psi^i(x,\mu):=\nabla_x g^i(x,\mu)+\int_{\R^n}\pa_{\mu} g^i(y,\mu)(x)\mu(\d y)$ for $i=1,2$.}
			
		\end{theorem}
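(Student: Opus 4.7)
\textbf{Proof proposal for Theorem~\ref{stable}.}
The plan is to mirror the uniqueness argument quantitatively and then invert the BSDE drift to recover an $L^2$ control on $\bar\eta$. Introduce the differences $\bar X$, $\bar\alpha$, $\bar\zeta$, $\bar Y$, $\bar Z$, $\bar\eta$ as in the statement, and apply It\^o's formula to $\bar X_t \bar Y_t$ on $[0,T]$. Since $\bar Y_T=0$, taking expectation yields an identity of the form
\begin{align*}
-\E[\langle \bar X_0,\bar Y_0\rangle]
&=\E\!\int_0^T\!\!\bigl\{-\Delta\widehat F(t)^{\T}\bar X_t+\bar Y_t^{\T}\Delta\widehat b(t)+\tr(\bar Z_t\Delta\widehat\sigma(t)^{\T})\bigr\}\d t\\
&\quad+\E\!\int_0^T\bar\eta_t\bigl(A(t)\bar X_t+B(t)\E[\bar X_t]\bigr)\d t+\E\!\int_0^T G(t)\bar Y_t\d\bar\zeta_t.
\end{align*}
Exactly as in the proof of Theorem~\ref{unique}, the last two terms are controlled by the KKT complementary slackness and the completeness condition \eqref{completeness}, giving
$\E\int_0^T\bar\eta_t(\phi^1-\phi^2)\d t\le 0$ and $\E\int_0^TG(t)\bar Y_t\d\bar\zeta_t\le 0$.
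Splitting $\Delta\Phi$ into its $(x,y,z)$-part and its $\mu$-part and applying the monotonicity condition (C3) together with the measure-Lipschitz estimate from (C4) (which, as in the uniqueness proof, absorbs into a factor $M/2<\beta$ via $\mathcal W_2^2(\mu^1_t,\mu^2_t)\le\E|\bar X_t|^2$), one obtains
\[
\Bigl(\beta-\tfrac{M}{2}\Bigr)\E\!\int_0^T\!\!\bigl(|\bar X_t|^2+|\bar Y_t|^2+|\bar Z_t|^2\bigr)\d t\le \E\bigl[|\bar X_0|\,|\bar Y_0|\bigr].
\]

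The second step is a standard coupled estimate to close the loop. A Young's-inequality on $\E[|\bar X_0||\bar Y_0|]$ together with a standard $L^2$ estimate on the BSDE, in which the $\bar\eta$-driven part of the drift is already controlled (up to the integral $\E\int \bar\eta_t(\phi^1-\phi^2)\d t$ treated above), yields $\E|\bar Y_0|^2\le C\,\E|\xi_1-\xi_2|^2$ and therefore
\[
\E\!\int_0^T\bigl(|\bar X_t|^2+|\bar Y_t|^2+|\bar Z_t|^2\bigr)\d t\le C\,\E|\xi_1-\xi_2|^2.
\]
The Lipschitz property of $\hat u$ from (C2) then gives $|\bar\alpha_t|^2\le C(|\bar X_t|^2+\mathcal W_2^2(\mu^1_t,\mu^2_t)+|\bar Y_t|^2+|\bar Z_t|^2)$, producing the bound on $\E\int_0^T|\bar\alpha_t|^2\d t$; a standard SDE estimate on $\bar X$ upgrades this to $\E|\bar X_T|^2\le C\,\E|\xi_1-\xi_2|^2$; and rearranging the forward SDE gives $\int_0^TG(t)\d\bar\zeta_t=\bar X_T-(\xi_1-\xi_2)-\int_0^T\Delta\widehat b\,\d t-\int_0^T\Delta\widehat\sigma\,\d W_t$, from which the bound on $\E\bigl|\int_0^TG(t)\d\bar\zeta_t\bigr|^2$ follows by (C4) and Itô's isometry.

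The main obstacle will be the $L^2$ estimate on $\bar\eta$, which, unlike the other unknowns, is not naturally Lipschitz-controlled by $(\bar X,\bar Y,\bar Z)$. The idea is to exploit the linear, control-free structure of $\phi$ in (C1'). Since $\nabla_x\phi=A(t)$ and $\partial_\mu\phi(\,\cdot\,)(x')=B(t)$, the BSDE drift reads
\[
-\d\bar Y_t=\bigl[\Delta(\nabla_xH^{r_0})(t)+\E'[\Delta(\partial_\mu H)(t)]-A(t)\bar\eta_t-B(t)\E[\bar\eta_t]\bigr]\d t-\bar Z_t\d W_t.
\]
By uniqueness of the Itô decomposition of $-\bar Y$, the bracket is the drift density of $-\bar Y$, so $A(t)\bar\eta_t+B(t)\E[\bar\eta_t]$ equals the Lipschitz remainder, pointwise $\d t\otimes\d\Pb$-a.e. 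Taking expectation and using $|A(t)+B(t)|\ge c_0>0$ from (C1') yields $|\E[\bar\eta_t]|\le c_0^{-1}|\E[\text{remainder}]|$; substituting back and using $|A(t)|\ge c_0$ gives a pointwise bound
\[
|\bar\eta_t|\le c_0^{-1}\bigl(|\text{remainder}|+|B(t)||\E[\bar\eta_t]|\bigr).
\]
Squaring, integrating, and invoking the Lipschitz property of $\nabla_xH^{r_0}$ and $\partial_\mu H$ (from (C4) and \assref{ass3}) together with the $L^2$ bounds already obtained for $\bar X,\bar Y,\bar Z,\bar\alpha$ and $\mathcal W_2(\mu^1,\mu^2)$, we conclude $\E\int_0^T|\bar\eta_t|^2\d t\le C\,\E|\xi_1-\xi_2|^2$, completing \eqref{stability}. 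The delicate point to check carefully is the pointwise-a.e.\ identification of the drift, which requires the linear, control-free form of $\phi$ together with the two non-degeneracy bounds; without (C1') this inversion simply would not be available.
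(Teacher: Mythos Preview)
Your overall architecture matches the paper's: It\^o on $\bar X\bar Y$ with monotonicity and the complementary-slackness signs, then Lipschitz of $\hat u$ for $\bar\alpha$, the forward equation rewritten for $\int G\,\d\bar\zeta$, and the linear non-degenerate structure of $\phi$ for $\bar\eta$. Where you diverge from the paper, however, is precisely in how the circularity around $\bar\eta$ is broken, and there your argument has a real gap.

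In your ``second step'' you claim a standard BSDE estimate gives $\E|\bar Y_0|^2\le C\,\E|\xi_1-\xi_2|^2$, with the $\bar\eta$-part of the driver ``already controlled'' by the sign inequality $\E\!\int\bar\eta(\phi^1-\phi^2)\le 0$. That sign controls the cross term $\E\!\int(A\bar X+B\E[\bar X])\bar\eta$ arising in the $\bar X\bar Y$ computation; it says nothing about the $L^2$-size of $A(t)\bar\eta_t+B(t)\E[\bar\eta_t]$ appearing in the BSDE driver, nor about the cross term $\E\!\int\bar Y^\T(A\bar\eta+B\E[\bar\eta])$ one gets from It\^o on $|\bar Y|^2$. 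So you cannot close the loop here. The paper avoids this by first deriving two a~priori estimates: an SDE estimate $\E[\sup_t|\bar X_t|^2]\le C\,\E[|\xi_1-\xi_2|^2+\int(|\bar Y|^2+|\bar Z|^2)]$ and a BSDE estimate $\E[\sup_t|\bar Y_t|^2+\int|\bar Z|^2]\le C\,\E[|\bar X_T|^2+\int|\bar X|^2]$. The key device in the latter is the \emph{integrated} relation $\int_t^T[A(s)\bar\eta_s+B(s)\E[\bar\eta_s]]\,\d s=-\bar Y_t+\int_t^T\Delta\hat F-\int_t^T\bar Z\,\d W$ (the paper's equation corresponding to subtracting the two BSDEs), which allows the $\bar\eta$-terms in the $|\bar Y|^2$ estimate to be rewritten using only $\bar Y$, $\Delta\hat F$, and $\bar Z$ and then absorbed after Young and Gronwall with $\epsilon$ small. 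Only after combining these two estimates with the cross-product inequality does one obtain the bound on $\E\!\int(|\bar X|^2+|\bar Y|^2+|\bar Z|^2)$; you cannot skip the pair of a~priori bounds.

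Your final step for $\bar\eta$ is also not right as written. You invoke ``uniqueness of the It\^o decomposition of $-\bar Y$'' to conclude $A(t)\bar\eta_t+B(t)\E[\bar\eta_t]=$``Lipschitz remainder'' pointwise. But the only It\^o decomposition you have for $\bar Y$ is the BSDE itself, whose drift is $-\Delta\hat F(t)+A(t)\bar\eta_t+B(t)\E[\bar\eta_t]$; there is no second representation to compare with, so the identity you need is tautological and gives no new bound. The paper instead returns to the integrated relation above together with (C1') at the very end (after $\bar X,\bar Y,\bar Z$ are already controlled), which is the correct level at which to run the two-step inversion $|A+B|\ge c_0$ then $|A|\ge c_0$; you should use that relation rather than a pointwise drift identification.
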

		
\begin{proof}
Recall $\Delta b(t)$, $\Delta\sigma(t)$ and $\Delta F(t)$ defined in the proof of Theorem~\ref{unique}. Then, one has 
\begin{align}\label{Gzeta}
				\int_0^tG(s)\d\bar{\zeta}_s=\bar{X}_t-(\xi_1-\xi_2)-\int_0^t\Delta b(s)\d s-\int_0^t\Delta\sigma(s){ \d W_s}.
			\end{align}
			On the other hand, applying It\^{o}'s formula to $|\bar{X}_t|^2$ leads to
			\begin{align}\label{Xsquare}
				\d |\bar{X}_t|^2=2\left(\Delta b(t)^{\T}\bar{X}_t+\tr\left(\Delta\sigma\Delta\sigma^{\T}(t)\right)\right)\d t+2\bar{X}_t^{\T}\Delta\sigma(t)\d W_t+2\bar{X}_t^{\T}G(t)\d\bar{\zeta}_t.
			\end{align}
			By the Young's inequality, $\left|\int_0^t2\bar{X}_s^{\T}G(s)\d\bar{\zeta}_s\right|\leq\epsilon\sup_{0\leq s\leq t}|\bar{X}_s|^2+\frac{1}{\epsilon}\left|\int_0^t G(s)\d\bar{\zeta}_s \right|^2$ for any $\epsilon\in(0,1)$. Plugging the above inequality into the equality \eqref{Xsquare}, and taking expectations on both sides, we arrive at, for any $t\in[0,T]$,
			\begin{align*}
			&	\E\left[\sup_{0\leq s\leq t}|\bar{X}_s|^2\right]\leq\E\left[\sup_{0\leq s\leq t}\int_0^s 2\left(\Delta b(r)^{\T}\bar{X}_r+\tr\left(\Delta\sigma\Delta\sigma^{\T}(r)\right)\right)\d r\right]\nonumber\\
            &\qquad+\E\left[\sup_{0\leq s\leq t}\int_0^s 2\bar{X}_r^{\T}\Delta\sigma(r)\d W_r\right]+\epsilon\E\left[\sup_{0\leq s\leq t}|\bar{X}_s|^2\right]+\frac{1}{\epsilon}\E\left[\left|\int_0^t G(s)\d\bar{\zeta}_s \right|^2\right].
			\end{align*}
			Using (C4) of Assumption~\ref{ass4}, the equality \eqref{Gzeta}, the BDG's inequality and the Gronwall's inequality, we deduce the existence of a constant $C>0$ depending on $(M,\epsilon, T)$ only such that
			\begin{align}\label{Xsup}
				\E\left[\sup_{0\leq t\leq T}|\bar{X}_t|^2\right]\leq C\E\left[|\xi^1-\xi^2|^2+\int_0^{T}(|\bar{Y}_t|^2+|\bar{Z}_t|^2)\d t\right].
			\end{align}
			With the help of the BSDE in \eqref{FBSDE}, we have, for all $t\in [0,T]$,
			\begin{align}\label{eta}
				\int_t^TA(s)\bar{\eta}_s\d s+\int_t^TB(s)\E\left[\bar{\eta}_s\right]\d s=\bar{Y}_T-Y_t+\int_t^T\Delta F(s)\d s-\int_t^T\bar{Z}_s\d W_s.
			\end{align}
			Applying the It\^{o}'s formula to $|\bar{Y}_t|^2$, we deduce, for all $t\in [0,T]$,
			\begin{align*}
				\left|\bar{Y}_t\right|^2+\int_t^T|\bar{Z}_s|^2\d s&={|\bar Y_T|^2}+2\int_t^T\bar{Y}_s^{\T}\Delta F(s)\d s-2\int_t^T\bar{Y}_s\bar{Z}_s\d W_s-2\int_t^T\bar{\eta}_sA(s)^{\T}\bar{Y}_s\d s\nonumber\\
				&\quad-2\int_t^T\E\left[\bar{\eta}_s\right]B(s)^{\T}\bar{Y}_s\d s.
			\end{align*}
			By using the Young's inequality again, for all $t\in [0,T]$,
			\begin{align*}
				\left|2\int_t^T\bar{\eta}_sA(s)^{\T}\bar{Y}_s\d s\right|&\leq \epsilon\sup_{t\leq s\leq T}|\bar{Y}_s|^2+\frac{1}{\epsilon}\left|\int_t^TA(s)^{\T}\bar{\eta}_s\d s\right|^2,\\
				\left|2\int_t^T\E\left[\bar{\eta}_s\right]B(s)^{\T}\bar{Y}_s\d s\right|&\leq \epsilon\sup_{t\leq s\leq T}|\bar{Y}_s|^2+\frac{1}{\epsilon}\left|\int_t^TB(s)^{\T}\E\left[\bar{\eta}_s\right]\d s\right|^2.
			\end{align*}
			Using Assumption~\ref{ass4}-(C4), the condition (C1') (C5'), the BDG's inequality and the Gronwall's inequality, we can similarly derive, for some constant $C>0$ depending on $(M,\epsilon,T)$ only,
			\begin{align}\label{Ysup}
				&\E\left[\sup_{0\leq t\leq T}|\bar{Y}_t|^2+\int_0^T|\bar{Z}_t|^2\d t \right]\leq C\E\left[|\bar{X}_T|^2+\int_0^T|\bar{X}_t|^2\d t\right].
			\end{align}
			Applying integration by parts to $\bar{X}_t\bar{Y}_t$ from $t=0$ to $T$, we have
			\begin{align*}
				\bar{X}_T\bar{Y}_T&=\int_0^T(-\Delta \bar{F}(t)\bar{X}_t+\bar{Y}\Delta\bar{b}(t)+\tr(\bar{Z}_t\Delta\bar{\sigma}(t)))\d t+\int_0^T(\bar{X}_t\bar{Z}_t+\bar{Y}_t\Delta\sigma)\d W_t\\
				&\quad+\int_0^T\left\{A(t)\bar{\eta}_t+B(t)\E[\bar{\eta}_t]\right\}\bar{X}_t+\int_0^TG(t)\bar{Y}_t\d\bar{\zeta}_t.
			\end{align*}
Taking expectations on both sides of the above equality and utilizing {\rm (C5')} (we assume (C5') holds for $\Psi^1$ without loss of generality), we deduce that
{\begin{align}\label{XY}
&\alpha\E\left[|\bar X_T|^2\right]+\beta\E\left[\int_0^T(|\bar{X}_t|^2+|\bar{Y}_t|^2+|\bar{Z}_t|^2)\d t\right]\nonumber\\
&\quad\leq \E\left[\bar{Y}_0^{\T}\left(\xi^1-\xi^2\right)\right]+\sup_{(x,\mu)\in\R^n\times\Pc_2(\R^n)}\left|\Psi^1(x,\mu)-\Psi^2(x,\mu)\right|^2\nonumber\\
&\quad\leq \frac{2}{\epsilon}\E\left[\left|\bar{Y}_0\right|^2\right]+\frac{1}{2\epsilon}\E\left[\left|\xi^1-\xi^2\right|^2\right]+\sup_{(x,\mu)\in\R^n\times\Pc_2(\R^n)}\left|\Psi^1(x,\mu)-\Psi^2(x,\mu)\right|^2.
\end{align}}
Choosing $\epsilon>0$ small enough and taking into account of \eqref{Xsup} and \eqref{Ysup}, we can find a constant $C>0$ depending on $M,T,\alpha,\beta$ only such that
\begin{align}\label{XYZ}
&\E\left[\left|\bar{X}_T\right|^2+\int_0^T\left(|\bar{X}_t|^2+|\bar{Y}_t|^2+|\bar{Z}_t|^2\right)\d t\right]\nonumber\\
&\quad\leq C\left(\E\left[\left|\xi^1-\xi^2\right|^2\right]+\sup_{(x,\mu)\in\R^n\times\Pc_2(\R^n)}\left|\Psi^1(x,\mu)-\Psi^2(x,\mu)\right|^2\right).
\end{align}
Combining the Lipschitz continuity of $\hat{u}:[0,T]\times\R^n\times\Pc_2(\R^n)\times\R^n\times\R^{n\times r}\mapsto U\subset\R^l$ in Assumption~\ref{ass4}-(C3), \eqref{Gzeta}, \eqref{eta} and the condition (C1'), we conclude the stability \eqref{stability} from \eqref{XYZ} above.
\end{proof}
		
{
\begin{remark}
We recall that the existence of an optimal strict control is proved in Section~\ref{sec:well-posedRelax} (see Theorem~\ref{existence_opt} and Corollary~\ref{optimality_exist}). The necessary conditions characterizing the optimal control are given in Theorem~\ref{SMP_con} using FJ condition, which in turn ensures the existence of solution to the associated FBSDE. Moreover, the uniqueness of the solution to the constrained FBSDE is established in Section~\ref{sec:uniqueFBSDE}. In sum, the combination of Theorem~\ref{existence_opt}, the characterization results in Section~\ref{sec:SMP} and the uniqueness result in Section~\ref{sec:uniqueFBSDE} provide a complete theory for the existence and uniqueness of solution to the associated FBSDE and the characterization of the optimal control for the general constrained MFC problem. 
\end{remark}
}

{Finally, we present an application example in  monetary reserve management of interbank system similar to \cite{CarmonaFouqueSun15} and \cite{BLY22}. Consider the log-monetary reserve process of a representative bank within the interbank system under equilibrium governed by the following controlled McKean-Vlasov SDE:
\begin{align}\label{eq:statebank}
dX_t^{\zeta}=\alpha(\mathbb{E}[X_t^{\zeta}]-X_t^{\zeta})dt + \sigma dW_t + d\zeta_t,\quad t\in[0,T].
\end{align}
Here, the term $\alpha(\mathbb{E}[X_t^{\zeta}]-X_t^{\zeta})dt$ with $\alpha>0$ depicts that the bank’s reserves are constantly adjusting toward the average reserve level of the banking system. The term  $\sigma dW_t$ with $\sigma>0$ describes the exogenous noise or randomness, representing market uncertainty affecting the bank’s liquidity, and the term $d\zeta_t$ stands for the singular control that captures the active intervention of the bank with the central bank. In particular, if $\zeta=(\zeta_t)_{t\in[0,T]}$ is  absolutely continuous w.r.t. $t$, then $\dot{\zeta}_t:=d\zeta_t/dt$ represents the rate of borrowing/lending to the central bank. 

The goal of this MFC problem is to choose an optimal control $\zeta = (\zeta_t)_{t\in[0,T]}$ that minimizes the cost functional that
\begin{align}\label{eq:objective}
\inf_{\zeta\in\mathscr{U}_{\text{ad}}[0,T]}J(\zeta)= \inf_{\zeta\in\mathscr{U}_{\text{ad}}[0,T]}\mathbb{E}\left[\int_0^T \beta\left(\mathbb{E}[X_t^{\zeta}]-X_t^{\zeta}\right)^2dt + \int_{[0,T]} \gamma d\zeta_t\right].
\end{align}
The objective functional includes two costs: (i) the deviation cost: the term $\beta(\mathbb{E}[X_t^{\zeta}]-X_t^{\zeta})^2$ penalizes the squared deviation of the individual bank’s reserves from the system average. A higher $\beta>0$ implies a stronger penalty for idiosyncratic risk or deviation from market norms, pushing the bank toward the mean; (ii) the trading cost: the term $\int_0^T \gamma d\zeta_t$ accounts for the cost of executing trades with the central bank. The parameter $\gamma\in\mathbb{R}$ determines whether borrowing from or lending to the central bank incurs a cost or generates profit.

The system is subject to the following dynamical constraint, which imposes a bound on the deviation between the expected reserve and the actual reserve:
\begin{align}\label{eq:dynamicalconstraint}
\left|\kappa\mathbb{E}[X_t^{\zeta}]-X_t^{\zeta}\right|\leq c,\quad dt\times d\mathbb{P}\mbox{-}a.s.,
\end{align}
where $c > 0$ serves as a predetermined threshold value and the term  $\kappa\mathbb{E}[X_t^{\zeta}]$ represents a scaled average of log‑monetary reserve with scaling factor $\kappa\in(0,1)$. The form of constraint in \eqref{eq:dynamicalconstraint} reflects a tolerance buffer: a bank is allowed to deviate from the system average, but only up to a proportion $(1-\kappa)$ of that average. In practice, $\kappa$ can be interpreted as a regulatory confidence factor or a liquidity-adjustment coefficient. A lower $\kappa$ permits larger individual deviations from the mean, offering more flexibility.

From the model described by \eqref{eq:statebank}, \eqref{eq:objective} and \eqref{eq:dynamicalconstraint}, for $(x,\mu)\in\R\times{\cal P}_2(\R)$, it holds that
\begin{align*}
    b(x,\mu)=\alpha\left(\int_{\mathbb{R}} y\mu(dy)-x\right),~~\sigma(x,\mu)\equiv\sigma,~~G(t)\equiv1,~~f(x,\mu)=\beta\left(\int_{\mathbb{R}} y\mu(dy)-x\right)^2,
\end{align*}
while $c(t)\equiv\gamma$ and the constraint functions are $\phi^1(x,\mu)=\kappa\int_{\mathbb{R}} y\mu(dy)-x+c$ and $\phi^2(x,\mu)=x-\kappa\int_{\mathbb{R}} y\mu(dy)+c$ in terms of dynamical constraint \eqref{eq:dynamicalconstraint}. It is not difficult to verify that the above model coefficients satisfy Assumption\ref{ass1} and Assumption \ref{ass3}. As the constraint functions $\phi^1(x,\mu)$ and $\phi^2(x,\mu)$ are independent of regular controls and satisfy Assumption \ref{ass4}-(C1), the KKT condition holds by Lemma \ref{KKT}. The Hamiltonian in Theorem \ref{SMP_con} becomes that, for $(x,\mu,y,z)\in\R\times{\cal P}_2(\R)\times\R^2$,
\begin{align*}
 H(x,\mu,y,z)=\alpha y\left(\int_{\mathbb{R}} y\mu(dy)-x\right)+\sigma z + \beta\left(\int_{\mathbb{R}} y\mu(dy)-x\right)^2.   
\end{align*}
%Note that the above C-MFC problem described as \eqref{eq:statebank}, \eqref{eq:objective} and \eqref{eq:dynamicalconstraint} has no regular controls. 
Let $\widehat{\zeta}=(\widehat{\zeta}_t)_{t\in [0,T]}$ be the optimal singular control and $\widehat{X}=(\widehat{X}_t)_{t\in[0,T]}$ be the state dynamics in \eqref{eq:statebank} under $\widehat{\zeta}$. The SMP established in Theorem \ref{SMP_con} gives that, the optimal singular control $\widehat{\zeta}=(\widehat{\zeta}_t)_{t\in [0,T]}$ of the above C-MFC problem can be characterized by
\begin{align}\label{eq:singularcontrol-cond}
    \E\left[\int_0^T\boldsymbol{1}_{\{Y_t+\gamma>0\}}\d\widehat{\zeta}_t\right]=0,
\end{align}
where the pair $(Y,Z)=(Y_t,Z_t)_{t\in[0,T]}$ satisfies $Y_t\geq-\gamma$ for all $t\in[0,T]$, $\mathbb{P}$-a.s., and moreover, the pair of processes solves the constrained BSDE:
\begin{align}\label{eq:examBSDE}
\d Y_t&=\alpha(Y_t-\E[Y_t])\d t-2\beta(\widehat{X}_t-\E[\widehat{X}_t])\d t+(\widehat{\eta}_t^1-\kappa\E[\widehat{\eta}_t^1])\d t-(\widehat{\eta}_t^2-\kappa\E[\widehat{\eta}_t^2])\d t+Z_tdW_t,\nonumber\\
Y_T&=0.
\end{align}
{For this mean-field linear BSDE, the well-posedness can be addressed by Theorem \ref{unique}.} Additionally, the non-negative adapted square-integrable processes  $\widehat{\eta}^i=(\widehat{\eta}_t^i)_{t\in [0,T]}$ for $i=1,2$ satisfy that  
\begin{align}\label{eq:consetaexam}
\E\left[\int_0^T\left(\kappa\E[\widehat{X}_t]-\widehat{X}_t+c\right)\widehat{\eta}_t^1\d t\right]=\E\left[\int_0^T\left(\widehat{X}_t-\kappa\E[\widehat{X}_t]+c\right)\widehat{\eta}_t^2\d t\right]=0.
\end{align}}

{
\begin{remark}\label{rem:meanfieldapprox}

Our C-MFC problem may be explained as the approximation of large-scale finite $N$-player cooperative game. An interesting future research is to establish the connection between the C-MFC problem and the N-player cooperative game under dynamical constraints. However, the main difficulty lies in the complex dynamic constraints in the N-player controlled system. In fact, to establish the connection of the approximation, one possible way is to transform the constrained N-player cooperative game into a problem without constraints. Then, we may apply the Lagrange-multiplier approach in \cite{BWY} to derive the SMP for the $N$-player cooperative game problem. We then will encounter a system of coupled FBSDEs, which is in general very challenging. Moreover, the approximation or the propagation of chaos will further require the joint convergence of the associated Lagrange multipliers and singular controls. One may also need to verify that the limiting model still satisfies the dynamic constraints, which introduces additional analytical difficulties. We leave this interesting but challenging problem for future study.
\end{remark}}
\noindent\textbf{Acknowledgements}  The authors sincerely thank three anonymous referees for their constructive comments and suggestions, which significantly improve the manuscript. L. Bo is supported by National Natural Science Foundation of China (No. 12471451), Natural Science Basic Research Program of Shaanxi (No. 2023-JC-JQ-05), Shaanxi Fundamental Science Research Project for Mathematics and Physics (No. 23JSZ010). X. Yu and J. Wang are supported by the Hong Kong RGC General Research Fund (GRF) under grant No. 15214125 and by the Hong Kong Polytechnic University research grant under No. P0045654.   
		
\begin{appendix}
\section*{Appendix: Proofs of Auxiliary Results}\label{appn} 
			
This appendix collects proofs of the auxiliary results presented in the main body of the paper.
			
\begin{proof}[Proof of Lemma~\ref{strictcontrol}]
For any $\omega\in\Omega$, define the set $K(t,\omega):=K(t,X_t(\omega),\Law^R(X_t^{\alpha,\zeta}))$ with $(t,\omega)\in[0,T]\times\Omega$. Since $K(t,\omega)$ is convex and closed, it follows that
\begin{align*}
\bigg(&\int_Ub(t,X_t(\omega),\Law^R(X_t),u)q_t(\omega,\d u),\int_U\sigma\sigma^{\T}(t,X_t(\omega),\Law^R(X_t),u)q_t(\omega,\d u),\\
&\quad\int_Uf(t,X_t(\omega),\Law^R(X_t),u)q_t(\omega,\d u),\int_U\phi^i(t,X_t(\omega),\Law^R(X_t),u)q_t(\omega,\d u),~i\in I\bigg)\in K(t,\omega)
\end{align*}
is equivalent to 
\begin{align*}
U(t,\omega):=\Big\{u\in& U;~\int_Ub(t,X_t(\omega),\Law^R(X_t),u)q_t(\omega,\d u)=b(t,X_t(\omega),\Law^R(X_t),u),\\
&\int_U\sigma\sigma^{\T}(t,X_t(\omega),\Law^R(X_t),u)q_t(\omega,\d u)=\sigma\sigma^{\T}(t,X_t(\omega),\Law^R(X_t),u),\\
&\int_Uf(t,X_t(\omega),\Law^R(X_t),u)q_t(\omega,\d u)\geq f(t,X_t(\omega),\Law^R(X_t),u),\\
&\int_U\phi^i(t,X_t(\omega),\Law^R(X_t),u)q_t(\omega,\d u)\leq\phi^i(t,X_t(\omega),\Law^R(X_t),u),~i\in I
\Big\}\neq\varnothing.
\end{align*}
For $N\geq1$, define $U^N(t,\omega):=\{u\in U(t,\omega);~|u|\leq N\}$. Then, for each $(t,\omega)\in [0,T]\times\Omega$, $U^N(t,\omega)$ increases to $U(t,\omega)$ as $N$ increases. We also introduce $B^N:=\{(t,\omega)\in[0,T]\times\Omega;~U^N(t,\omega)\neq\varnothing\}$. Since $U^{{N}}(t,\omega)$ is bounded and closed, it is also a compact subset of $U\subset\R^l$. Then, there exists a measurable selection $\alpha^N:B^N\to U$ such that $\alpha^N_t(\omega)\in U^N(t,\omega)$ for $(t,\omega)\in [0,T]\times\Omega$ and $\alpha^N$ is $\mathcal{B}([0,t])\otimes\mathcal{F}_t/\mathcal{B}(\R^l)$ measurable due to \cite{Leese}. Hence, we can define an $\Fb$-progressively measurable process $\alpha=(\alpha_t)_{t\in[0,T]}$ by $\alpha_t(\omega)=\sum_{N=1}^{\infty}\alpha^N_t(\omega)\mathbf{1}_{B_N\backslash B_{N-1}}(t,\omega)$.
				
Next, we show that $\alpha=(\alpha_t)_{t\in[0,T]}$ is an admissible (strict) control. Using the growth condition imposed on $f$ in Assumption~\ref{ass1}-(A6), we have from the above construction of $\alpha$ that
\begin{align*}
+\infty&>\E^R\left[\int_0^T\int_Uf(t,X_t,\Law^R(X_t),u)q_t(\d u)\d t\right]\geq \E^R\left[\int_0^Tf(t,X_t,\Law^R(X_t),\alpha_t)\d t\right]\\
&\geq -\int_0^TM(1+\E^R[|X_t|^p]+M_2(R^X(t)))\d t+M_1\E^R\left[\int_0^T|\alpha_t|^p\d t\right].
\end{align*}
This, together with Lemma~\ref{moment_p}, indicates that $\E^R[\int_0^T|\alpha_t|^p\d t]<\infty$. On the other hand, using the construction of $\alpha$ again, we deduce that, for any $\psi\in C_b^2(\R^n)$ and $(t,\omega)\in[0,T]\times\Omega$, 
\begin{align*}
& \psi(X_t(\omega))-\int_0^t\mathbb{G}\psi(s,X_s(\omega),\mathcal{L}^R(X_s),\alpha_s(\omega))\d s-\int_0^t\nabla_x\psi(X_s(\omega))G(s)\d\zeta_s(\omega)\\
&\qquad-\sum_{0<s\leq t}\left(\psi(X_s(\omega))-\psi(X_{s-}(\omega))-\nabla_x\psi(X_{s-}(\omega))\Delta X_s(\omega)\right)=M_t^R\psi(\omega)
\end{align*} 
is an $(R,\Fb)$-martingale as $R$ is a control rule in Definition~\ref{admissible}. Moreover, we have, for $m\times R$-$\as~(t,\omega)\in[0,T]\times\Omega$,
\begin{align*}
\phi^i\left(t,X_t(\omega),\Law^R(X_t),\alpha_t(\omega)\right)\geq\int_U\phi^i(t,X_t(\omega),\Law^R(X_t),u)q_t(\omega,\d u)\geq0,~~\forall i\in I.    
\end{align*}
This yields that $\alpha=(\alpha_t)_{t\in[0,T]}$ is admissible. Furthermore, by the construction of $\alpha$, it holds that 
\begin{align*}
J(\alpha,\zeta)&=\E^R\left[\int_0^Tf(t,X_t,\Law^R(X_t),\alpha_t)\d t+{\int_{[0,T]} c(t)\d\zeta_t+g(X_T,\Law^R(X_T))}\right]\\
&\leq\E^R\left[\int_0^T\int_Uf(t,X_t,R^X(t),u)q_t(\d u)\d t+{\int_{[0,T]} c(t)\d\zeta_t+g(X_T,\Law^R(X_T))}\right]=\Gamma(R).
\end{align*}
Thus, we complete the proof of the lemma.  
\end{proof}

\begin{proof}[Proof of Lemma~\ref{closedset}]
For $R\in\Pc_2(\Omega;\widehat{\Omega})$ and $i\in I$, let us define the following subset $\tilde{A}_i^R$ of $\Omega$ by
\begin{align*}
\tilde{A}_i^R:=\left\{\omega\in\Omega;~\int_{t_1}^{t_2}\int_U\phi^i(t,X_t(\omega),R^X(t),u)q_t(\omega,\d u)\d t\geq 0,~\forall 0\leq t_1<t_2\leq T\right\}.  
\end{align*}
We first show that $(m\times R)(A_i^R)=T$ iff $R(\tilde{A}_i^R)=1$. Here, recall that $A_i^R\subset[0,T]\times\Omega$ is defined by \eqref{eq:AiR}. 
				
\textit{Necessity}. For any $\omega\in \Omega$, we define the $\omega$-section of $A_i^R$ that
\begin{align*}
A_i^R(\omega)=\left\{t\in[0,T];~\int_U\phi^i(t,X_t(\omega),R^X(t),u)q_t(\omega,\d u)\geq 0 \right\}.
\end{align*}
Then, it follows from Fubini's Theorem that $T=(m\times R)(A_i^R)=\int_{\Omega}m(A_i^R(\omega))R(\d \omega)$. This implies that, there exists some $R$-null set $N\in\F$ such that $m(A_i^R(\omega))=T$ for all $\omega\in\Omega\backslash N$. For such $\omega$ and $0\leq t_1<t_2\leq T$, it holds that
\begin{align*}
&\int_{t_1}^{t_2}\int_U\phi^i(t,X_t(\omega),R^X(t),u)q_t(\omega,\d u)\d t\nonumber\\
&\qquad=\int_{(t_1,t_2]\cap A_i^R(\omega)}\int_U\phi^i(t,X_t(\omega),R^X(t),u)q_t(\omega,\d u)\d t\geq 0.
\end{align*}
Consequently, $\omega\in\tilde{A}_i^R$, or more precisely,  $\Omega\backslash N\subset \tilde{A}_i^R$, and hence $R(\tilde{A}_i^R)=1$.
				
\textit{Sufficiency}. For $\omega\in \tilde{A}_i^R$, we have $m(A_i^R(\omega))=T$, and Fubini's Theorem results in $(m\times R)(A_i^R)=\int_{\Omega}m(A_i^R(\omega))R(\d \omega)=T$. Now, we show that $\tilde{A}_i^R$ is closed for any $R\in\Pc_2(\Omega;\widehat{\Omega})$. Let $\omega^{\ell}=(\psi^{\ell},q^{\ell},\zeta^{\ell})\to\omega=(\psi,q,\zeta)$ in $\Omega$ as $\ell\to\infty$ and $\omega^{\ell}\in\tilde{A}_i^R$. Then, following the proof of Lemma 3.5 in \cite{Haussmann}, we arrive at
\begin{align*}
\int_{t_1}^{t_2}\int_U\phi^i(t,\psi_t,R^X(t),u)q_t(\d u)\d t\geq\limsup_{\ell\to\infty}\int_{t_1}^{t_2}\int_U\phi^i(t,\psi^{\ell}_t,R^X(t),u)q^{\ell}_t(\d u)\d t\geq 0,   \end{align*}
where we utilized the fact that $\phi^i$ is u.s.c. in $(x,\mu,u)\in\R^n\times\Pc_2(\R^n)\times\R^l$ imposed in Assumption~\ref{ass1}-(A2). Hence, $\omega\in\tilde{A}_i^R$, which yields the closedness of $\tilde{A}_i^R$.
				
Let $R_{\ell}\to R$ in $\Pc_2(\Omega;\widehat{\Omega})$ as $\ell\to\infty$ with $R_{\ell}$ satisfying $R_{\ell}(\tilde{A}_i^{R_{\ell}})=1$ for all $\ell\geq1$. Hence, $R_{\ell}(\tilde{A}_i^{R_{\ell}})-R_{\ell}(\tilde{A}_i^{R})=R_{\ell}(\tilde{A}_i^{R_{\ell}}\backslash\tilde{A}_i^R)$. However, we have
\begin{align*}
\tilde{A}_i^{R_{\ell}}\backslash\tilde{A}_i^R&\subset\left\{
\omega\in\Omega;~\int_{t_1}^{t_2}\int_U\phi^i(t,X_t(\omega),R_{\ell}^X(t),u)q_t(\omega,\d u)\d t\geq 0,~\text{and}\right.\\
&\qquad\int_{t_1}^{t_2}\left.\int_U\phi^i(t,X_t(\omega),R^X(t),u)q_t(\omega,\d u)\d t<0, ~\exists 0\leq t_1<t_2\leq T\right\}:= B_{\ell}.
\end{align*}
Note that $\phi^i$ is uniformly u.s.c. in $\mu$ w.r.t.  $(t,x,u)\in[0,T]\times\R^n\times\R^l$, which implies that
\begin{align*}
\phi^i(t,X_t(\omega),R^X(t),u)\geq \limsup_{\ell\to\infty}\phi^i(t,X_t(\omega),R_{\ell}^X(t),u)
\end{align*}
uniformly w.r.t. $(t,\omega,u)\in[0,T]\times\Omega\times\R^l$, and hence
\begin{align*}
\int_{t_1}^{t_2}\int_U\phi^i(t,X_t(\omega),R^X(t),u)q_t(\omega,\d u)\d t\geq\limsup_{\ell\to\infty}\int_{t_1}^{t_2}\int_U\phi^i(t,X_t(\omega),R_{\ell}^X(t),u)q_t(\omega,\d u)\d t
\end{align*}
uniformly w.r.t. $\omega\in\Omega$. Hence, $B_{\ell}$ is an empty set whenever $\ell$ is sufficiently large. Thus, it holds that
\begin{align*}
\limsup_{\ell\to\infty}R_{\ell}(\tilde{A}_i^{R_{\ell}})&\leq\limsup_{\ell\to\infty}\left(R_{\ell}(\tilde{A}_i^{R_{\ell}})-R_{\ell}(\tilde{A}_i^{R})\right)+\limsup_{\ell\to\infty}R_{\ell}(\tilde{A}_i^{R})\\
&=\limsup_{\ell\to\infty}R_{\ell}(\tilde{A}_i^{R_{\ell}}\backslash\tilde{A}_i^{R})+\limsup_{\ell\to\infty}R_{\ell}(\tilde{A}_i^{R})\leq R(\tilde{A}_i^R).
\end{align*}
For the last inequality, we used the fact that $\tilde{A}_i^R$ is closed and $R_{\ell}$ converges weakly to $R$ as $\ell\to\infty$. This yields that $R(\tilde{A}_i^R)=1$, and hence $(m\times R)(\tilde{A}_i^R)=T$, which completes the proof.
\end{proof}

\end{appendix}
		
		%%%%%%%%%%%%%%%%%%%%%%%%%%%%%%%%%%%%%%%%%%%%%%%%%%%%%%%%%%%%%%%%%%%
		%%                                                               %%
		%% Use the two commands below for producing your bibliography    %%
		%% with bibtex, then comment again the commands and include the  %%
		%% content of the .bbl file in this file below the commands.     %%
		%%                                                               %%
		%%%%%%%%%%%%%%%%%%%%%%%%%%%%%%%%%%%%%%%%%%%%%%%%%%%%%%%%%%%%%%%%%%%
		
		%\bibliographystyle{amsplain}
		%\bibliography{yourbibfilename}
		
		% add below the content of your .bbl file produced by bibtex.

		%%%%%%%%%%%%%%%%%%%%%%%%%%%%%%%%%%%%%%%%%%%%%%%%%%%%%%%%%%%%%%%%%%%
		%%                                                               %%
		%% You may add acknowledgments (optional).                       %%
		%%                                                               %%
		%%%%%%%%%%%%%%%%%%%%%%%%%%%%%%%%%%%%%%%%%%%%%%%%%%%%%%%%%%%%%%%%%%%

		%%%%%%%%%%%%%%%%%%%%%%%%%%%%%%%%%%%%%%%%%%%%%%%%%%%%%%%%%%%%%%%%%%%
		%%                                                               %%
		%% You have reached the end of your document.                    %%
		%%                                                               %%
		%%%%%%%%%%%%%%%%%%%%%%%%%%%%%%%%%%%%%%%%%%%%%%%%%%%%%%%%%%%%%%%%%%%
		
\end{document}